\let\subset\subseteq 
\let\eps\varepsilon
\let\rho\varrho
\def\dcup{\dot\cup}
\def\itmit#1{\rm ({\it #1\,})}
\def\rom{\itmit{\roman{*}}}
\def\abc{\itmit{\alph{*}}}
\newtheorem{theorem}{Theorem}%[section]
\newtheorem{lemma}[theorem] {Lemma}
\newtheorem{problem}[theorem]{Problem}
\newtheorem{definition}[theorem] {Definition} 
\newtheorem{setup}[theorem] {Setup}
\theoremstyle{remark}
\newtheorem{type}{Type} 
\newtheorem{prepround}{Preparation} 
\numberwithin{AuxiliaryCl}{theorem}
\newtheorem{AuxiliaryClM}{Claim}
\newcounter{maintheoremcounter}
\numberwithin{AuxiliaryClM}{maintheoremcounter}
\newtheorem{AuxiliarySubClaimM}[AuxiliaryClM]{Claim}
\newcounter{lemma23counter}
\numberwithin{AuxiliaryClL23}{lemma23counter}
\newcommand{\By}[2]{\overset{\mbox{\tiny{#1}}}{#2}}
\newcommand{\ByRef}[2]{   \By{\eqref{#1}}{#2} }
\newcommand{\gBy}[1]{     \By{#1}{>} }
\newcommand{\geByRef}[1]{ \ByRef{#1}{\ge} }
\def\endofFact{\hfill\scalebox{.6}{$\Box$}}
\newcommand{\NATS}{\mathbb{N}}
\newcommand{\T}{\mathcal{T}}
\newcommand{\cS}{\mathcal{S}}
\newcommand{\cK}{\mathcal{K}}
\newcommand{\cD}{\mathcal{D}}
\newcommand{\cN}{\mathcal{N}}
\newcommand{\cZ}{\mathcal{Z}}
\newcommand{\M}{\mathcal{M}}
\newcommand{\I}{\mathcal{I}}
\def\A{A} 
\newcommand{\ttt}{\tau}
\newcommand{\mmm}{\mu}
\newcommand{\iii}{\iota}
\newcommand{\oldqed}{}
\newenvironment{factproof}[1][Proof]{
  \renewcommand{\oldqed}{\qedsymbol}
  \renewcommand{\qedsymbol}{\endofFact}
  \begin{proof}[#1]
}{
  \end{proof}
  \renewcommand{\qedsymbol}{\oldqed}
}
\newcommand{\conn}{\leadsto}
\newcommand{\EMAIL}[1]{  \textit{E-mail}: \texttt{#1}} 
\let\oldmarginpar\marginpar
\renewcommand\marginpar[1]{\-\oldmarginpar[\raggedleft\footnotesize #1]%
{\raggedright\footnotesize #1}}
\title{A density Corr\'adi--Hajnal Theorem}
  \author[P. Allen]{Peter Allen*}
  \author[J. B\"ottcher]{Julia B\"ottcher*}
  \thanks{
    *
    Department of Mathematics,
    London School of Economics,
    Houghton Street,
    London, WC2A 2AE, UK.
    \EMAIL{p.d.allen|j.boettcher@lse.ac.uk}}
  \author[J. Hladk\'y]{Jan Hladk\'y\dag}
  \thanks{
    \dag\ 
    DIMAP and Mathematics Institute,
    University of Warwick,
    Coventry, CV4~7AL, UK. The author is an EPSRC Research Fellow.
    \EMAIL{honzahladky@gmail.com}}
  \author[D. Piguet]{Diana Piguet\ddag}
\thanks{\ddag\ 
New Technologies for Information Society,
University of West Bohemia,
Pilsen, Czech Republic.
\EMAIL{piguet@ntis.zcu.cz}}
  \thanks{
    PA, JH, and DP were supported by DIMAP, EPSRC award
    EP/D063191/1.
    PA was partially supported by FAPESP (Proc.~2010/09555-7), and
    JB by FAPESP (Proc.~2009/17831-7).
    PA and JB are grateful to NUMEC/USP, N\'ucleo de Modelagem Estoc\'astica e
    Complexidade of the University of S\~ao Paulo, for supporting this
    research. The research leading to this result has received funding from the
    European Union's Seventh Framework Programme (FP7/2007-2013) under grant
    agreement no.\ PIEF-GA-2009-253925.
    An extended abstract of this paper appeared in the proceedings of the Eurocomb~2011 conference.
    Support by the Institut Mittag-Leffler (Djursholm, Sweden) is gratefully
acknowledged.}
\subjclass[2010]{05C35 (primary)}
\begin{document}

\begin{abstract}
    We find, for all sufficiently large $n$ and each $k$, the maximum number of edges in an $n$-vertex graph which does not contain $k+1$ vertex-disjoint triangles.
  
This extends a result of Moon [Canad. J. Math. 20 (1968), 96--102] which is in turn an extension of Mantel's Theorem. Our result can also be viewed as a density version of the Corr\'adi--Hajnal Theorem.
\end{abstract}

\maketitle

%%%% DOCUMENT %%%%%%%%%%%%%%%%%%%%%%%%%%%%%%%%%%%%%%%%%%%%%%%%%%%

\section{Introduction}
%\input{intro_archaic.tex}
%\input{intr_modern.tex}
%THIS WAS INPUT FROM intr_modern.tex
A classic result of Mantel asserts that each $n$-vertex graph
$G$ with more than $\lfloor \frac n2\rfloor \lceil \frac n2 \rceil$ edges
contains a triangle. What can we say about the number of triangles in a graph with more than $\lfloor \frac n2\rfloor \lceil \frac n2 \rceil$ edges?

There are three natural interpretations of this question. We can ask how many \emph{vertex-disjoint} triangles are guaranteed, how many \emph{edge-disjoint} triangles are guaranteed, or simply how many triangles are guaranteed in \emph{total}. The answer to each of the first two questions is $1$ (which is trivial) and Rademacher proved (see~\cite{ErdosRademacher}) that the answer to the last is $\lfloor\tfrac{n}{2}\rfloor$; in each case the extremal example consists of a complete balanced bipartite graph with one edge added to the larger part. It is then natural to ask the same questions of $n$-vertex graphs $G$ with at least $\lfloor \frac n2\rfloor \lceil \frac n2 \rceil+m$ edges, for any $m\ge 1$.

These questions are much harder. Lov\'asz and Simonovits~\cite{LovSim} gave a conjectured lower bound on the number of triangles present in any $n$-vertex graph $G$ with at least $\lfloor \frac n2\rfloor \lceil \frac n2 \rceil+m$ edges, which Erd\H{o}s~\cite{ErdosRademacher} had already proved correct for $m$ small enough compared to $n$. The conjecture remains open, although a celebrated recent result of Razborov~\cite{RazborovTriangles}, using his method of flag algebras, is that the conjectured lower bound---a complicated continuous but only piecewise differentiable function in~$m$---is asymptotically correct for all $m$. The number of edge-disjoint triangles was studied by Gy\H{o}ri~\cite{Gyo:EdgeDisjoint}, but only for $m\le 2n-10$ were exact results proved, and for large $m$ it is not clear what the right answer should be.

In this paper we solve (for sufficiently large $n$) the problem of how many vertex-disjoint triangles are guaranteed to exist in an $n$-vertex graph $G$ with a given number of edges. It is convenient to rephrase the problem in the following way. 

\begin{problem}\label{prob:triangles}
  How many edges can an $n$-vertex graph $G$ possess if it does not contain $k+1$ vertex-disjoint triangles?
  % (sometimes called a \emph{tiling with $k+1$ triangles})
\end{problem}

This problem was first studied by Erd\H{o}s~\cite{Erdos62Tiling} and by Moon~\cite{Moon68Tiling}. The former
proved the exact result when $n\ge 400k^2$, and the latter when $n\ge 9k/2+4$, giving the following theorem.

\begin{theorem}[Moon~\cite{Moon68Tiling}]\label{thm:Moon}
  Suppose that $n \ge  9k/2 + 4$. Let $G$ be an $n$-vertex graph which does not contain $k+1$ vertex-disjoint triangles. Then 
  \[e(G)\le \binom{k}{2}+k(n-k)+\left\lceil\frac{n-k}{2}\right\rceil\left\lfloor\frac{n-k}{2}\right\rfloor \,.\]
\end{theorem}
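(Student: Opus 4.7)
My approach is induction on $k$. The base case $k=0$ is immediate: Mantel's theorem gives exactly the claimed bound.

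For the inductive step, fix $k\geq 1$, assume the bound for $k-1$, and take an $n$-vertex graph $G$ with $n\geq 9k/2+4$ containing no $k+1$ vertex-disjoint triangles. Write $f(n,k):=\binom{k}{2}+k(n-k)+\lceil(n-k)/2\rceil\lfloor(n-k)/2\rfloor$ and suppose for contradiction that $e(G)>f(n,k)$. A direct computation yields $f(n,k)-f(n,k-1)=\lfloor(n-k)/2\rfloor>0$, hence $e(G)>f(n,k-1)$, and the contrapositive of the induction hypothesis supplies $k$ vertex-disjoint triangles $T_1,\ldots,T_k$ in $G$. Set $U:=\bigcup_i V(T_i)$ and $R:=V(G)\setminus U$, so that $|R|=n-3k\geq 3k/2+4$; because no $(k{+}1)$st disjoint triangle exists, $G[R]$ is triangle-free and Mantel's theorem yields $e(G[R])\leq\lfloor|R|/2\rfloor\lceil|R|/2\rceil$.

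The crux is then to bound the edges incident to $U$ by $f(n,k)-\lfloor|R|/2\rfloor\lceil|R|/2\rceil$, which will contradict the assumption. The engine of this bound is a family of \emph{triangle-rotation} claims, each exploiting the rule that any swap producing a new triangle disjoint from all current ones delivers the forbidden $(k{+}1)$st. A prototypical instance: if $r\in R$ has all three neighbors in some $T_i=\{a,b,c\}$, replacing $T_i$ by $\{r,a,b\}$ moves $c$ into the ``leftover'' set; if additionally $c$ has two adjacent neighbors in $R\setminus\{r\}$, we win. Iterating variants of this argument I would establish that (i) typically each $r\in R$ has at most two neighbors in any $T_i$; (ii) for distinct $i,j$ the bipartite graph between $T_i$ and $T_j$ is bounded away from $K_{3,3}$; and (iii) the ``spine'' consisting of one high-degree vertex per triangle behaves like the clique in the extremal construction. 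Aggregating these local bounds gives the required contradiction.

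\textbf{Main obstacle.} A single rotation rarely produces $k{+}1$ disjoint triangles, so one must \emph{chain} several swaps while maintaining a monovariant (such as $e(G[R])$ plus an edge-density on $U$) so that the chain terminates in a configuration that either yields the extra triangle or forces the desired edge bound. The hypothesis $n\geq 9k/2+4$ is exactly the numerical threshold ensuring $|R|=n-3k$ is large enough to absorb displaced vertices during these chains; weakening it alters the extremal structure (for small $n$ the extremum is instead attained by graphs built from $K_4$'s). I expect the trickiest step to be the near-extremal case, where $G$ already resembles the target configuration so closely that each rotation gains only $O(1)$ edges, necessitating a careful parity analysis of the bipartite term $\lceil(n-k)/2\rceil\lfloor(n-k)/2\rfloor$ to close the contradiction.
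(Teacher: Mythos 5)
The paper does not prove Moon's theorem; Theorem~\ref{thm:Moon} is cited from~\cite{Moon68Tiling}, and the paper's own techniques (Setup~\ref{setup} plus the rotation bounds of Section~\ref{sec:small} and the maximisation Lemma~\ref{lem:maxf}) are only invoked, in Lemma~\ref{lem:tri1}, to re-establish a \emph{weaker} version valid for $k\le(n-8)/5$. So there is no internal proof to compare against, and what you have written must stand or fall on its own.

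Viewed that way, it is a plan, not a proof, and the plan has a genuine gap right where the content of Moon's argument lives. Your induction wrapper is fine ($f(n,k)-f(n,k-1)=\lfloor(n-k)/2\rfloor$ checks out), as is extracting $k$ disjoint triangles and the Mantel bound on $G[R]$. But the three claims (i)--(iii) you propose to ``aggregate'' are unproved, and (ii) as stated is simply false without further hypotheses: a pair of triangles from $\T$ can span a $K_6$ in a $(k+1)\times K_3$-free graph --- e.g.\ $K_{3k}\cup\bar K_{n-3k}$, where every pair of $\T$-triangles has all nine edges between them. Capping $e(T_i,T_j)$ below $9$ is possible only when both triangles are ``seen'' by structure outside $U$ (matching edges of $\M$ or vertices of $\I$), and this is exactly the distinction the paper's decomposition of $\T$ into $\T_1,\ldots,\T_4$ is designed to track: Lemma~\ref{clm:NRupper}\ref{T1} shows $e(T_i,T_j)\le 7$ \emph{only} for $T_i,T_j\in\T_1$, using the $\T_1$ definition essentially, and bounds for the other classes (parts~\ref{T2}, \ref{T3T4}, Lemma~\ref{clm:TiTj}) are different and sometimes weaker. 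Without such a classification, the trivial $e(U)\le\binom{3k}{2}$ overshoots the target by $\Theta(k^2)$ even after using (i), so (ii) and (iii) must be made precise and proved --- but that is the whole theorem, and your ``Main obstacle'' paragraph names the difficulty (chaining rotations, a monovariant, the role of the threshold $n\ge 9k/2+4$) without resolving it. To turn this sketch into a proof you would need, at minimum, a surrogate for the $\T_1/\T_2/\T_3/\T_4$ split (or Moon's original bookkeeping) that certifies which triangle pairs obey which interaction bounds, and then a maximisation of the resulting six-variable upper bound of the kind the paper defers to Lemma~\ref{lem:maxf} --- none of which is present.
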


Interestingly, although Moon states that his result ``almost certainly remains
valid for somewhat smaller values of $n$ also'',
in fact he almost reaches a natural barrier: the graph which Moon proved to
be extremal (the graph $E_1(n,k)$ in Definition~\ref{def:extremal}, see also Figure~\ref{fig:extremalgraphs}) is only
extremal when $n\ge 9k/2+3$.

We give an exact solution to Problem~\ref{prob:triangles} for all values of~$k$ when~$n$ is greater than an absolute constant~$n_0$.
Our main result, Theorem~\ref{thm:main}, states that the answer is given by four different extremal (families of) graphs in four different regimes of~$k$.

\medskip

We remark that our result can also be seen as a variation of two other classical theorems in extremal graph theory. Firstly, Erd\H{o}s and Gallai~\cite{ErdGall59} answered the analogous question for edges instead of triangles.

\begin{theorem}[Erd\H{o}s and Gallai~\cite{ErdGall59}]\label{thm:ErdGal}
  For any $n$-vertex graph~$G$ without $k+1$ vertex-disjoint edges,
  $e(G)\le \max\{k(n-k)+\binom{k}{2},\binom{2k+1}{2} \}$.
\end{theorem}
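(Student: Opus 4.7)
I would attack this via the Tutte--Berge formula. Since $\nu(G) \le k$, that formula produces a set $U \subseteq V(G)$ with $o(G-U) - |U| \ge n - 2k$, where $o$ counts odd components. Writing $u := |U|$ and using the trivial bound $o(G-U) \le n - u$, this already forces $u \le k$, so from the outset the relevant parameter sits in $[0,k]$.

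Next I would bound $e(G)$ by three contributions coming from this partition: at most $\binom{u}{2}$ edges inside $U$, at most $u(n-u)$ edges across the cut, and at most $\sum_C \binom{|C|}{2}$ edges inside the components of $G - U$. The third quantity, viewed as a function of the multiset of component sizes, is convex, so subject to these sizes summing to $n - u$ with at least $n - 2k + u$ odd parts, it is maximised by taking $n - 2k + u - 1$ isolated vertices together with one odd component of size $2k - 2u + 1$. This yields
\[
  e(G) \;\le\; \binom{u}{2} + u(n-u) + \binom{2k-2u+1}{2} \;=:\; f(u).
\]

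A direct calculation gives $f''(u) = 3 > 0$, so $f$ is strictly convex on $[0,k]$ and attains its maximum at an endpoint. Since $f(0) = \binom{2k+1}{2}$ and $f(k) = k(n-k) + \binom{k}{2}$, the claimed inequality follows.

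The main obstacle, I expect, is the rearrangement step that concentrates all of the component mass into one big odd component plus singletons: this is really a majorisation argument on integer partitions with a parity constraint, and needs one to verify that replacing an even component of size $\ge 2$, or splitting a large odd component into two smaller odd ones, strictly decreases $\sum\binom{|C|}{2}$. Everything else is either Tutte--Berge (which we invoke as a black box from matching theory) or routine calculus. A self-contained alternative would be to induct on $k$ using a maximum matching $M$ of size $\nu \le k$ in $G$, exploiting the fact that $V(G) \setminus V(M)$ is independent and that augmenting-path obstructions severely restrict edges between matched pairs and unmatched vertices; however, the resulting case analysis is more delicate than the Tutte--Berge route sketched above.
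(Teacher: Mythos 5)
The paper cites Theorem~\ref{thm:ErdGal} without giving a proof, so there is no internal argument to compare against; I am reviewing your sketch on its own merits, and it is correct. Your Tutte--Berge route is a standard clean proof of Erd\H{o}s--Gallai (their original 1959 argument was an induction, but the Berge-formula approach is tighter and shorter). The one step that genuinely needs care is the one you flag yourself: the usual convexity shift $\binom{a}{2}+\binom{b}{2}\to\binom{a+1}{2}+\binom{b-1}{2}$ flips the parity of both parts, so it cannot be iterated blindly under the constraint that at least $q:=n-2k+u$ parts be odd. To make the rearrangement rigorous you should record three observations: (i) merging an even part into the largest odd part strictly increases $\sum_C\binom{|C|}{2}$ without changing the number of odd parts, so one may assume all parts odd; (ii) moving two vertices from a smaller odd part to a larger odd part preserves all parities and strictly increases the sum, so one may assume all but the largest part are singletons; and (iii) merging two singletons into the large part reduces the number of odd parts by two, and since $n-u\equiv q\pmod 2$, iterating this lands exactly on $q$ odd parts, giving $q-1$ singletons plus one part of size $2k-2u+1$, never getting stuck at $q+1$. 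One further remark: if $n\le 2k$ and $u<2k+1-n$ the putative extremal part of size $2k-2u+1$ exceeds the $n-u$ available vertices, so $f(u)$ there merely overestimates the component contribution; this is harmless because you use $f$ only as an upper bound and then maximise it over $u\in[0,k]$ via $f''(u)=3>0$, whose endpoint values are indeed $\binom{2k+1}{2}$ and $k(n-k)+\binom{k}{2}$. With those details filled in the proof is complete.
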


In fact, they showed that, depending on~$k$, the extremal graph for this problem either consists of $k$ vertices which are complete to all vertices, or of a $(2k+1)$-clique and a disjoint independent set. An analogous behaviour of the extremal structure in the hypergraph case is predicted by the famous Matching Conjecture of Erd\H{o}s,~\cite{Erdos:MatchingConjecture}. In this sense the appearance of various very different extremal structures in our result is not surprising.

Secondly, Corr\'adi and Hajnal~\cite{CorHaj} considered the variant of Problem~\ref{prob:triangles} where the number of edges is replaced by the minimum degree and proved the following well-known theorem.

\begin{theorem}[Corr\'adi and Hajnal~\cite{CorHaj}]\label{thm:corhaj}
  For any $n$-vertex graph~$G$ which does not contain $k+1$ vertex disjoint triangles, $\delta(G)\le k+\big\lfloor\tfrac{n-k}{2}\big\rfloor$.
\end{theorem}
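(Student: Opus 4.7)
I would argue by induction on $k$ via the contrapositive: if $\delta(G)\ge k+\lfloor(n-k)/2\rfloor+1$ (which is only non-vacuous for $n$ at least around $3(k+1)$, and we tacitly assume this), then $G$ contains $k+1$ vertex-disjoint triangles. The base case $k=0$ asserts that $\delta(G)\ge\lfloor n/2\rfloor+1$ forces a triangle: for any edge $uv$, $|N(u)|+|N(v)|>n$ implies $N(u)\cap N(v)\ne\emptyset$.

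For the inductive step, fix a maximum family $\mathcal{T}=\{T_1,\dots,T_m\}$ of vertex-disjoint triangles in $G$ and, among all maximum families, one maximizing $e(G[L])$ where $L:=V(G)\setminus V(\mathcal{T})$. Set $S:=V(\mathcal{T})$ and aim for a contradiction from $m\le k$. By maximality $G[L]$ is triangle-free, so applying the base-case estimate inside $G[L]$ gives $d_L(u)+d_L(v)\le|L|$ for any edge $uv\in G[L]$; consequently $d_S(u)+d_S(v)\ge 2\delta(G)-|L|\ge 3m+k+1$ (using $|L|=n-3m$ and $2\lfloor(n-k)/2\rfloor\ge n-k-1$). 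Averaging over the $m\le k$ triangles of $\mathcal{T}$ shows that some $T_i=\{x,y,z\}$ collects at least $5$ of the $6$ possible adjacencies from $\{u,v\}$, so without loss of generality $u$ is adjacent to all of $T_i$ and $v$ is adjacent to, say, $x$ and $y$.

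The key move is to swap $T_i$ for the triangle $\{v,x,y\}$, preserving the packing size $m$ but transferring $v$ into a triangle and $z$ into the new leftover $L'=(L\setminus\{v\})\cup\{z\}$. Since $u$ was adjacent to $z$, the edge $uz$ lies in $G[L']$. If $G[L']$ contains a triangle, then together with the swapped packing we exhibit $m+1$ disjoint triangles, contradicting maximality. Otherwise the base-case bound inside $G[L']$ applied to $uz$ identifies another ``rich'' $T_j$ and we iterate; bookkeeping along this rotation sequence shows it cannot continue indefinitely without either exposing a triangle in some leftover or strictly raising $e(G[L])$, contradicting our extremal choice.

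The main obstacle I expect is precisely this last step: a single swap need not increase $e(G[L])$, so one must argue that some swap in the rotation sequence does, or that a composite swap produces a new triangle. A cleaner route is to replace the secondary maximization by a stronger extremal quantity (say, $e(G[L])$ plus a weighted count of $L$-vertices fully adjacent to some $T_i$), engineered so the swap strictly improves it in one step. A further subtlety is the boundary regime $n\approx 3(k+1)$, where the leftover is small enough that the averaging degenerates and one has to invoke the hardcore Corr\'adi--Hajnal triangle-factor statement for graphs with $\delta\ge 2n/3$ as a separate ingredient.
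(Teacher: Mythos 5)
The paper does not prove Theorem~\ref{thm:corhaj}; it cites it directly from Corr\'adi and Hajnal and uses $E_1(n,k)$ only to illustrate sharpness, so there is no internal argument to compare against. Your opening moves are sound: for an edge $uv$ of $G[L]$, triangle-freeness of $G[L]$ gives $d_L(u)+d_L(v)\le|L|$, hence $d_S(u)+d_S(v)\ge 2\delta(G)-|L|\ge 3m+k+1>4m$ once $m\le k$, and pigeonhole really does yield a triangle $T_i$ receiving at least five of the six possible edges from $\{u,v\}$; the swap $T_i\mapsto\{v,x,y\}$ is then a legitimate rotation preserving the packing size.

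However, the two gaps you flag yourself are genuine, and they are precisely where the content of the theorem lives, so the proposal is a strategy rather than a proof. First, you need a starting edge in $G[L]$, which is not automatic; when $L$ is independent one can salvage the argument by applying the same count to two non-adjacent $u,w\in L$ (the swap then creates the edge $uz$ in $L'$ and strictly increases $e(G[L])$, contradicting the extremal choice), and when $|L|\le 1$ one must instead check $m\ge k+1$ directly from $n\ge 3(k+1)$; none of this is written out. Second, and more fundamentally, your secondary quantity $e(G[L])$ does not control the general swap: $e(G[L'])-e(G[L])=d_{L\setminus\{v\}}(z)-d_{L\setminus\{v\}}(v)$, which can be negative, so the extremal choice does not yield an immediate contradiction, and ``bookkeeping along the rotation sequence'' is not a verification step but exactly the heart of the theorem --- without a potential that is monotone under rotations, the sequence can cycle. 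You correctly suggest replacing $e(G[L])$ by a cleverer potential, but you do not construct one, and designing one that demonstrably increases under the swap is the hard part; this is where Corr\'adi and Hajnal's original argument (and its modern reworkings) put in the real effort, especially near the boundary $n=3(k+1)$ where the claim is the full triangle-factor theorem.
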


The graph~$E_1(n,k)$ from Definition~\ref{def:extremal} is also extremal in this setting for the whole range $k\in [0,\frac{n}{3}]$.

Thus, our result is the `density version' of the Corr\'adi--Hajnal Theorem.

\subsection{Organisation of the paper} We state and discuss our main result,
Theorem~\ref{thm:main}, in Section~\ref{sec:result}. We outline its proof in Section~\ref{sec:outline}. The main combinatorial work of the proof is to be found in Sections~\ref{sec:small} and~\ref{sec:large}. In Section~\ref{sec:proof} we show how to deduce Theorem~\ref{thm:main} from these combinatorial arguments and some maximisation problems. In Section~\ref{sec:fewtriangles} we prove an auxiliary lemma which is one of the key points of the proof of Theorem~\ref{thm:main}, building on our previous work~\cite{AllBoettHlaPig_Strengthening}. In Section~\ref{sec:conclusion} we then discuss possibilities of extending our result. Our proof of Theorem~\ref{thm:main} requires tedious maximisation arguments, which we state as they are needed but whose derivations are postponed to Appendix~\ref{sec:max}. 

The proof relies on a number of elementary but lengthy calculations. These calculations were performed by hand, and the details are given. However, for verification and for the reader's convenience, we used the computer algebra software Maxima to check many of these calculations. The output pdf file as well as all the data in the wxMaxima format are available as ancillary files on the arXiv.

\section{Our result}\label{sec:result}
Given an integer $\ell$ and a graph $H$, we write $\ell\times H$ to denote the
disjoint union of $\ell$ copies of $H$. We say that a graph is \emph{$\ell\times
H$-free} if it does not contain~$\ell$ vertex disjoint (not necessarily induced)
copies of $H$. In Theorem~\ref{thm:main} we determine the maximal number of edges in a
$(k+1)\times K_3$-free graph on~$n$ vertices for every
$0\leq k<\frac{n}{3}$. The extremal formula is a somewhat opaque maximum of four different terms, so in preference to presenting it we shall describe four constructions of $n$-vertex $(k+1)\times K_3$-free graphs corresponding to these four terms. We say that an edge~$e$ (or more generally a set of
vertices) \emph{meets} a set of vertices~$X$ if~$e$ and~$X$ intersect. The
edge~$e$ meets~$X$ in~$X'$ if $X'=X\cap e$.

\begin{definition}[extremal graphs]
\label{def:extremal}
  Let $n$ and $k$ be non-negative integers with $k\le\frac{n}{3}$. We
  define the following four graphs (see also
  Figure~\ref{fig:extremalgraphs}).\footnote{The constructions for
    $E_2(n,k)$ and $E_4(n,k)$ do not give unique graphs. We collectively
    denote all graphs constructed in this way by $E_2(n,k)$ and $E_4(n,k)$,
    respectively. In the following we only use properties of these graphs that
    are shared by all of them.}
  \begin{enumerate}[label={$E_{\arabic{*}}(n,k)$:}]
  \item Let $X\dcup Y_1\dcup Y_2$ with $|X|=k$,
    $|Y_1|=\lceil\frac{n-k}{2}\rceil$, and
    $|Y_2|=\lfloor\frac{n-k}{2}\rfloor$ be the vertices of $E_1(n,k)$. Insert
    all edges intersecting
    % meeting\footnote{We say that an edge~$e$  \emph{meets} a set
    % of vertices~$X$ if~$e$ and~$X$ intersect.} 
    $X$, and between~$Y_1$ and~$Y_2$.
  \item  The second class of extremal graphs is defined only for $k<\frac{n-1}4$.
    Let $X\dcup Y_1\dcup Y_2$ with $|X|=2k+1$,
    $|Y_1|=\lfloor\frac{n}{2}\rfloor$, and $|Y_2|=\lceil\frac{n}{2}\rceil-2k-1$ (or
    $|Y_1|=\lceil\frac{n}{2}\rceil$, and
    $|Y_2|=\lfloor\frac{n}{2}\rfloor-2k-1$) be the vertices of $E_2(n,k)$.
    Insert all edges within $X$, and between $Y_1$ and $X\cup Y_2$.  
    If~$n$ is odd, this construction captures two graphs, if~$n$ is even
    just one.

  \item Let $X\dcup Y_1$ with $|X|=2k+1$ and $|Y_1|=n-2k-1$  be the vertices of
    $E_3(n,k)$. Insert all edges intersecting $X$.

   \item
    The fourth class of extremal graphs is defined only for $k\ge \frac
    n6-2$. When $k\ge \frac{n-2}3$ take $E_4(n,k)$ to be the complete graph $K_n$. Otherwise, the vertex set is formed by five disjoint sets $X$, $Y_1$, $Y_2$,
    $Y_3$, and $Y_4$, with $|Y_1|=|Y_3|$, $|Y_2|=|Y_4|$,
    $|Y_1|+|Y_2|=n-3k-2$, and $|X|=6k-n+4$. Insert  all edges in
    $X$, between $X$ and $Y_1\cup Y_2$, and between $Y_1\cup Y_4$ and
    $Y_2\cup Y_3$. Thus the choice of $|Y_1|$ determines a
    particular graph in the class $E_4(n,k)$. All
    graphs in $E_4(n,k)$ have the same number of edges.
  \end{enumerate}
\end{definition}
\begin{figure}[ht]
\centering
\psfrag{E1}{\scalebox{1.1}{$E_1(n,k)$}}
\psfrag{E2}{\scalebox{1.1}{$E_2(n,k)$}}
\psfrag{E3}{\scalebox{1.1}{$E_3(n,k)$}}
\psfrag{E4}{\scalebox{1.1}{$E_4(n,k)$}}
\psfrag{X}{$X$}
\psfrag{Y1}{$Y_1$}
\psfrag{Y2}{$Y_2$}
\psfrag{Y3}{$Y_3$}
\psfrag{Y4}{$Y_4$}
\includegraphics[scale=0.4]{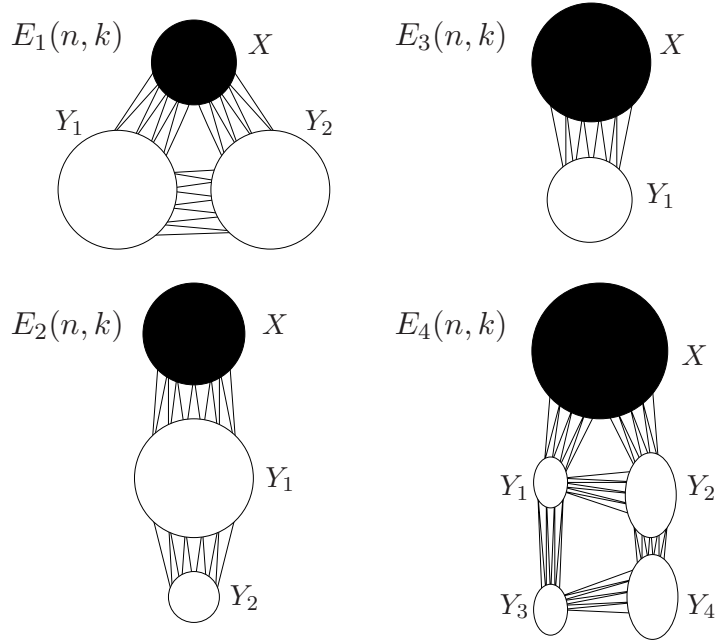}
\caption{The extremal graphs.}
\label{fig:extremalgraphs}
\end{figure}

Our main result is the following.

\begin{theorem}\label{thm:main} 
\setcounter{maintheoremcounter}{\value{theorem}}
  There exists $n_0$ such that for each $n>n_0$ and each $k$, $0\le k\le \frac n3$ we have the following. 
Let~$G$ be a
  $(k+1)\times K_3$-free graph on~$n$ vertices. Then
  \begin{equation}\label{eq:mainformula}
  e(G)\le\max_{j\in[4]}\,e\big( E_j(n,k) \big)\;.
  \end{equation}
\end{theorem}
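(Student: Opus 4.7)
The plan is to fix a maximum vertex-disjoint triangle packing $\mathcal{F}$ in $G$ of size $k' \le k$, with $W = V(\mathcal{F})$ and $R = V(G) \setminus W$. By maximality, $G[R]$ is triangle-free, so Mantel yields $e(G[R]) \le \lfloor |R|^2 / 4 \rfloor$. Moreover, the interaction between $R$ and the triangles in $\mathcal{F}$ is heavily constrained: any local ``swap'' that replaces a triangle $T \in \mathcal{F}$ by a triangle using more vertices of $R$ is forbidden, since it would enlarge $\mathcal{F}$. I would harvest these augmenting-type constraints to bound $e(W,R)$ and $e(G[W])$ in terms of $|W|$, $|R|$, and a handful of structural parameters.

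The proof then splits into a \emph{small-$k$} regime (where $E_2(n,k)$ and $E_3(n,k)$ dominate, and whose core is a $(2k+1)$-clique) and a \emph{large-$k$} regime (where $E_1(n,k)$ and $E_4(n,k)$ dominate, and whose core is a small apex set completely joined to a bipartite or quadripartite backbone). In the large-$k$ regime I would follow Moon's strategy, but sharpened: identify a candidate apex set $X$ of vertices meeting almost every triangle of $\mathcal{F}$, peel it off, and recurse on $G - X$. The challenge of pushing Moon's hypothesis $n \ge 9k/2 + 4$ down to $k \le n/3$ is that the residual structure can be genuinely far from bipartite while still containing relatively few triangles. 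This is precisely what the auxiliary lemma of Section~\ref{sec:fewtriangles} should control: a stability statement saying that graphs with few triangles are close in edit distance to bipartite graphs, strengthening the main theorem of \cite{AllBoettHlaPig_Strengthening}. Applied to $G[R]$, and then layer-by-layer around $W$, it should force a backbone aligned with $E_1$ or $E_4$.

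In the small-$k$ regime I would instead use an Erd\H{o}s--Gallai-style peeling: repeatedly remove a vertex of maximum degree, tracking the deficit, until either the residual is small enough that the remaining edges fit into a $(2k+1)$-clique (leading to the $E_3$ structure) or a large bipartite piece emerges and fits together with the clique (leading to $E_2$). Once each regime has been reduced to a short list of near-extremal structures, the final step is a sequence of explicit maximisation problems---writing $e(G)$ as a function of a handful of parameters and comparing it with each $e(E_j(n,k))$---which the authors defer to Appendix~\ref{sec:max}. The four $E_j$'s each dominate in a different interval of $k/n$, and the intervals collectively tile the full range $0 \le k \le n/3$.

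The main obstacle I anticipate is the behaviour around the transition thresholds $k \approx n/6$ and $k \approx (n-1)/4$, where two different $E_j$'s are almost tied and the tight inequality between them leaves no slack in the preceding structural arguments. This is what makes the ``few triangles'' stability lemma indispensable---mere asymptotic closeness to bipartite is not enough, one needs an explicit quantitative trade-off between triangle count and edit distance---and it is also what makes the appendix maximisations unavoidable. A secondary difficulty is that $E_2(n,k)$ and $E_4(n,k)$ are defined as entire families of graphs, so the structural results en route must tolerate the internal flexibility of these families rather than zoom in on a single target graph.
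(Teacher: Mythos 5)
Your proposal captures the broad flavour of the proof---fix a maximum triangle packing, exploit swap/augmentation constraints, and finish with explicit maximisations---but it departs from the paper in several substantive ways and contains gaps that would be hard to close.

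First, the decomposition $V(G)=W\dcup R$ with a Mantel bound on $G[R]$ is too coarse. The paper's Setup~\ref{setup} refines $R$ into a maximum matching $\M$ plus an independent set $\I$, and---crucially---refines $\T$ (your $\mathcal F$) into $\T_1,\dots,\T_4$ according to \emph{how} the triangles are seen by $\M$ and $\I$. The rotation bounds in Lemmas~\ref{clm:NRupper}--\ref{clm:InTupper} genuinely depend on this fourfold classification: e.g.\ two triangles in $\T_1$ have at most $7$ edges between them, while two in $\T_2$ may have $8$, and the difference is exactly what separates $E_1$ from $E_2/E_3$. A plain Mantel bound on $G[R]$ does not feed into those rotation arguments and is not enough to recover this.

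Second, your description of the Section~\ref{sec:fewtriangles} lemma is wrong. Lemma~\ref{lem:T4upperMG} is not a general ``few triangles implies close to bipartite'' stability statement, and it does not strengthen~\cite{AllBoettHlaPig_Strengthening}. It says: if $H$ has a distinguished set $A$ with $|A|\le h/2$ such that \emph{no} collection of vertex-disjoint triangles in $H$ covers three or more vertices of $A$, then $e(H)\le p(h,|A|)$. It is applied not to $G[R]$ (which is triangle-free anyway) but to $G[\T_5\cup\M\cup\I]$, with $A=\M\cup\I$, and the hypothesis is supplied by the large-rotation Lemma~\ref{T4rotate}. The stability theorem of~\cite{AllBoettHlaPig_Strengthening} (about graphs with no triangle touching $A$) is used as a tool inside the proof of Lemma~\ref{lem:T4upperMG}, not improved.

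Third, the regime ordering is reversed. Per Table~\ref{tab:transitions}, $E_1$ is extremal for small $k$ (up to $\approx 2n/9$), then $E_2$, then $E_3$, and $E_4$ only takes over near $k\approx 0.31n$. Your proposed split (``$E_2,E_3$ for small $k$, $E_1,E_4$ for large $k$'') does not match the actual phase diagram, and the thresholds you cite ($k\approx n/6$, $k\approx (n-1)/4$) are not the ones where the extremal structure changes.

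Finally, the paper never peels an apex set and recurses, nor does it run an Erd\H{o}s--Gallai-style degree-peeling in any regime. There is a single pass: rotation lemmas produce the functions $f$, $g_s$, $g_\ell$ bounding $e(G)$ in terms of $(t_1,\dots,t_4,m,i)$, and Lemmas~\ref{lem:maxf}, \ref{lem:maxgsmall}, \ref{lem:maxgl} maximise these over $F(n,k)$. The only case split is on $e(\T_4)$ being large or small relative to $8\binom{t_4}{2}+10t_4$, which determines whether the large-rotation machinery of Section~\ref{sec:large} is available. Recursion would face the difficulty that after peeling $X$ the residual graph no longer forbids the right number of disjoint triangles, and the paper sidesteps this entirely.
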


For three sets $A,B,C$ (not necessarily distinct) we say that a triangle $uvw$ is of \emph{type} $ABC$, if $u\in A, v\in B$, and $w\in C$. 
All triangles in $E_1(n,k)$ are of type $XY_1Y_2$, $XXY_i, i=1,2$, or $XXX$, thus intersecting~$X$ at least once. 
Thus $E_1(n,k)$ contains at most $k$ vertex-disjoint triangles. All triangles in $E_2(n,k)$ and in $E_3(n,k)$ are of type $XXY_1$, or $XXX$, intersecting $X$ at least twice. Therefore these two graphs do not contain more than $k$ vertex-disjoint triangles. All triangles in $E_4(n,k)$ must be fully contained in $X\cup Y_1\cup Y_2$ and therefore there are at most $\lfloor\frac 13 |X\cup Y_1\cup Y_2|\rfloor=k$ vertex-disjoint triangles.

The graphs $E_i(n,k)$ are
edge-maximal subject to not containing $(k+1)\times K_3$. The only exception is $E_4(n,k)$ for $k\lesssim \frac n4$.  Indeed, when $k<\frac n4-1$, we have $|X|<2k$. Therefore, in any collection of $k$ vertex-disjoint triangles, there must be at least $w=2k-|X|$ triangles of type $XY_1Y_2$. Thus, if $|Y_1|\le w$, one can actually add edges inside $Y_1$ without increasing the maximum number of vertex-disjoint triangles.
However $E_4(n,k)$ is in any case not the extremal graph in this range; see the discussion below and Table~\ref{tab:transitions}. The graphs $E_i(n,k)$ have the following numbers of edges (after an exact formula we identify the leading terms; to this end we use the symbol~$\approx$).
\begin{align}
\begin{split}\label{eq:defEi}
e\big(E_1(n,k)\big) &=
\binom{k}{2}+k(n-k)+\left\lceil\frac{n-k}{2}\right\rceil\left\lfloor\frac{n-k}{2}\right\rfloor
\approx \tfrac14 n^2-\tfrac14 k^2+\tfrac12 kn \;,
\\
e\big(E_2(n,k)\big) &=
\binom{2k+1}{2}+\left\lceil\frac{n}{2}\right\rceil\left\lfloor\frac{n}{2}\right\rfloor
\approx \tfrac14 n^2+2k^2 \;,
\\ 
e\big(E_3(n,k)\big) &= 
\binom{2k+1}{2}+(2k+1)(n-2k-1) 
\approx 2kn-2k^2 \;,
\\
e\big(E_4(n,k)\big) &= 
\binom{6k-n+4}{2}+(6k-n+4)(n-3k-2)+(n-3k-2)^2 \\
& \approx \frac{n^2}2-3kn+9k^2 \;.
\end{split}
\end{align}
%
% gnuplot:
% n=1000000
% plot [0:n] x**2/2 + x*(n-x) + (n-x)*(n-x)/4 , \
% > ((2*x+1)**2)/2+n*n/4, \
% > ((2*x+1)**2)/2+(2*x+1)*(n-2*x-1), \
% > ((6*x-n+4)**2)/2+(6*x-n+4)*(n-3*x-2)+(n-3*x-2)**2
% (need to explain from which k on which structure exists)
%
Comparing these edge numbers reveals that, as~$k$ grows from~$0$ to $n/3$,
the extremal graphs dominate in the following order (for $n$ sufficiently large).
In the beginning $E_1(n,k)$ has the most edges of these four graphs 
until $k\approx\frac{2n}{9}$, where it is surpassed
by $E_2(n,k)$. At $k\approx\frac{n}{4}$
this extremal structure ceases to exist and is replaced by $E_3(n,k)$,
until finally at $k\approx(5+\sqrt{3})n/22$ the graph
$E_4(n,k)$ takes over. The exact thresholds are listed in
Table~\ref{tab:transitions}. Further, the edge numbers of the
graphs $E_i(n,k)$ are plotted in
Figure~\ref{fig:plotExtremalGraphs}. \begin{figure}[ht]
\centering
\includegraphics[width=\textwidth]{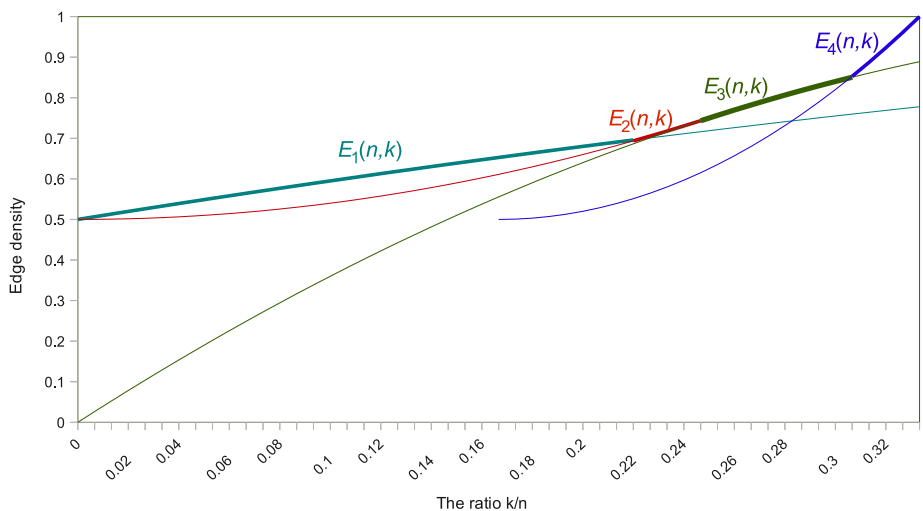}
\caption{
% FOR BLACK AND WHITE PICTURE: Edge densities of the graphs $E_i(n,k)$ where $k$
% ranges
% from
% $0$
% to
% $\frac
% n3$. Grey dashed is used for $E_1(n,k)$, black solid for $E_2(n,k)$, grey fine dashed for $E_3(n,k)$, and black fine dashed for $E_4(n,k)$.
Edge densities of the graphs $E_i(n,k)$ where $k$ ranges from $0$ to $\frac n3$.
}
\label{fig:plotExtremalGraphs} 
\end{figure}

Observe that for fixed $n$, as $k$ increases, the transitions of the extremal graphs from
$E_1(n,k)$ to $E_2(n,k)$ and from $E_3(n,k)$ to $E_4(n,k)$
are not continuous:
$\Theta(n^2)$ edges must be edited to change from the former to the
latter structure. 
The transition from $E_2(n,k)$ to $E_3(n,k)$ however is continuous. 

{ \renewcommand\arraystretch{1.4}
\begin{table*}[ht]
   \begin{center}
    \begin{tabular}{|l|c|}
      \hline
      \bf graph & \bf extremal for \\ \hline
      $
      \displaystyle
      \begin{aligned}
        & E_1(n,k) \vphantom{\Bigg(} \\
        & E_2(n,k) \vphantom{\frac{2n-6}{9}} \\
        & E_3(n,k) \vphantom{\frac{\sqrt{3n^2}}{22}} \\
        & E_4(n,k) \vphantom{\Bigg(}
      \end{aligned}
      $
      &
      $
      \displaystyle
      \begin{gathered}
        1 \le  k\le \frac{2n-6}{9} \vphantom{\Bigg(} \\
        \frac{2n-6}{9} \le  k\le\frac{n-1}{4} \\
        \frac{n-1}{4} \le  k\le \frac{5n-12+\sqrt{3n^2-10n+12}}{22}\quad \\
        \quad\frac{5n-12+\sqrt{3n^2-10n+12}}{22} \le k \le \frac{n}{3} \vphantom{\Bigg(}
        % OLD: $(5n-1+\sqrt{3n^2-32n-21})/22$
      \end{gathered}
      $
      \\ \hline
    \end{tabular}
  \end{center}
\caption{Transitions between the extremal graphs.}
\label{tab:transitions}
\end{table*}
}

\section{Proof outline and setup}\label{sec:outline}

The basic idea of our proof is straightforward: we show that we can partition the
vertices of any $(k+1)\times K_3$-free graph into six parts, and establish some
upper bounds on the numbers of edges within and between these parts in terms of
their sizes only. This defines a function (of six variables) which is an upper
bound on the number of edges of a graph with parts of the given sizes. Then
maximising this function (subject to $n$ and $k$ being fixed) we obtain an upper
bound on the number of edges of a $(k+1)\times K_3$-free graph with $n$ vertices,
and observe that this matches the lower bounds provided by the extremal
structures given in Definition~\ref{def:extremal}.

\smallskip

We shall now fix the basic setup for our proof, i.e., we will specify the
above mentioned six parts, which will be called $\T_1$, $\T_2$, $\T_3$,
$\T_4$, $\M$, and $\I$. We need the following definition. Let~$G$ be a
graph, $uv$ be an edge in~$G$ and $xyz$ a triangle in~$G$. We say that $uv$
\emph{sees} vertex $x$ of $xyz$ if $uvx$ is a triangle
in $G$. The edge $uv$ sees $xyz$ if $uv$ sees at least one of the vertices $x$, $y$, or~$z$.
Similarly, we say that a vertex $u$ sees (the edge $xy$ of) the
triangle $xyz$ if $uxy$ is a triangle in~$G$.

Throughout we will assume the following setup.

\begin{setup}
\label{setup}\index{$\T$}\index{$\M$}\index{$\I$}\index{$\T_1,\ldots,\T_4$}
  Let~$G$ be an $n$-vertex graph which is edge-maximal subject to not
  containing $(k+1)\times K_3$. Let $\T$ be a set of $k$ vertex-disjoint
  triangles in~$G$, let~$\M$ be a maximum matching outside $\T$, and
  presume $\T$ is chosen to maximise the size of $\M$. The
  remaining vertices of $G$, which form an independent set, we call~$\I$.

  We now split the set $\T$ into four parts as follows, forming together with~$\M$ and $\I$ the six above-mentioned parts of $G$.
  Let $\T_1$ be the set of triangles in~$\T$ seen by at least two $\M$-edges.
  Let $\T_2$ be the set of triangles in $\T-\T_1$ seen by either an
  $\M$-edge and at least one $\I$-vertex or by two $\I$-vertices.
  Finally, we aim to partition the remaining triangles of $\T$ into a `sparse part'
  $\T_3$ and a `dense part' $\T_4$ by applying the following algorithm.
  We start with $D$ equal to the set of all triangles in $\T-(\T_1\cup\T_2)$,
  and $S=\emptyset$. If there is a triangle in $D$ which sends at most
  $8(|D|-1)$ edges to the other triangles in $D$, we move it to $S$. (Consequently each triangle in~$D$ sends at least~$8$ edges to other single triangles in~$D$ on average.) We
  repeat until $D$ contains no more such triangles. We then set $\T_3:=S$,
  and $\T_4:=D$. Note that every triangle in $\T_4$ sends more than $8(|\T_4|-1)$
  edges to the other triangles in $\T_4$.

  \index{$m$}\index{$i$}\index{$t_1\ldots,t_4$}
  We define $m:=|\M|$, $i:=|\I|$, and $t_j:=|\T_j|$ for all
  $j\in[4]$.
\end{setup}

We remark that the outcome of the algorithm for constructing $\T_3$ and
$\T_4$ is not uniquely determined. However, any possible pair $\T_3$ and
$\T_4$ resulting from the construction we described is suitable for our
purposes.

Further, we emphasise that $k=|\T|$ is the number of triangles in $\T$,
which cover $3k$ vertices (and similarly $\M$ covers $2m$ vertices). The
function $e(\bullet)$ counts the number of edges in $G$ induced by the
structure~$\bullet$, e.g., $e(\T_3)=e(G[V(\T_3)])$. Similarly,
$e(\bullet,\star)$ counts edges in the bipartite graph between the
structures~$\bullet$ and~$\star$.

Before we proceed, let us give some motivation for the above defined
partition of~$G$ by applying it to our four extremal graphs from
Definition~\ref{def:extremal}. First consider the graph $E_1(n,k)$. It is
easy to check that for this graph we have $\T=\T_1$  in the range when this graph is optimal (see Table~\ref{fig:extremalgraphs}), and all vertices
(except perhaps one) outside~$\T$ are in~$\M$. Any pair of
triangles of $\T$ has seven edges between them in $E_1(n,k)$, the set $\M$ induces $m^2$ edges, and $e(\M,\T_1)=4mt_1$. We shall show in our proof that \emph{in any graph~$G$}, the definition of $\T_1$ forces that
any two triangles of~$\T_1$ have at most seven edges between them (see Lemma~\ref{clm:NRupper}\ref{T1}), the set~$\M$ induces at most $m^2$ edges (see Lemma~\ref{clm:NRupper}\ref{M}), and $e(\M,\T_1)\le 4mt_1$ (see Lemma~\ref{clm:NRupper}\ref{MT1}). Together with bounds which we will prove on the number of edges touching $\I$, we conclude that if $\T=\T_1$ then $e(G)\le e\big(E_1(n,k)\big)$.

Similarly, the definition of $\T_2$ and $\T_4$ is motivated by the fact
that in both $E_2(n,k)$ and $E_3(n,k)$ we have $\T=\T_2$, while in
$E_4(n,k)$ we have $\T=\T_4$ (again in the appropriate range). The set $\T_3$ is always empty in the
extremal graphs. It turns out that, for $E_2(n,k)$ and $E_3(n,k)$ we will
be able to use a similar strategy as lined out for $E_1(n,k)$, i.e., we
shall infer from the definition of~$\T_2$ that $E_2(n,k)$ and $E_3(n,k)$
have a maximal number of edges in $\T_2$ (see Lemma~\ref{clm:NRupper}\ref{T2})
and then show that $\T=\T_2$ in an extremal graph (for the appropriate
range of $k$). For $E_4(n,k)$ we must work harder: the definition of~$\T_4$
permits nine edges to exist between a pair of triangles, yet in $E_4(n,k)$
only some pairs of triangles actually have nine edges between them (see Table~\ref{tab:NumberEdges}). 
{ \renewcommand\arraystretch{1.4}
\begin{table*}[ht]
   \begin{center}
    \begin{tabular}{c|c|c|c|c}
      & $XXX$ & $XXY_1$ & $XXY_2$ & $XY_1Y_2$       
\\      \hline
$XXX$ & 9 & 9 & 9 & 9\\ \hline
$XXY_1$& 9 & 8 & 9 & 8\\ \hline
$XXY_2$& 9 & 9 & 8 & 8\\ \hline
$XY_1Y_2$& 9 & 8 & 8 & 7 
    \end{tabular}
  \end{center}
\caption{Number of edges between triangles of different types in~$E_4(n,k)$.}
\label{tab:NumberEdges}
\end{table*}
}

\smallskip

As explained, our main goal in the following will be to establish bounds on
the number of edges within and between the six parts of~$G$. One concept
that will turn out to be very fruitful in this context is that of a rotation.

\begin{definition}[rotation]
  Let~$G'$ be a graph and let $\T'$ be a triangle factor in~$G'$.  An
  \emph{improving rotation} on a set $V'$ is a set of vertex
  disjoint triangles~$\tilde \T$ in~$V'$ which witnesses either that $\T'$ is not of maximum size, or that its choice does not maximise the
  matching number of $G'-V(\T')$: We can replace those triangles of~$\T'$ which are contained in~$V'$ by the triangles~$\tilde \T$
  and obtain a triangle factor~$\T''$ with one of the following two
  properties. Either $|\T''|>|\T'|$, or $|\T''|=|\T'|$ but the matching
  number of $G'-V(\T'')$ is bigger than that of $G'-V(\T')$.  If, on the
  other hand, $|\T''|=|\T'|$ and the matching number of $G'-V(\T'')$ equals
  that of $G'-V(\T')$ then~$V'$ is a \emph{non-improving rotation} or
  simply rotation.  In both cases we also say that we can \emph{rotate}
  from~$\T'$ to~$\T''$.
\end{definition}

Typically, the rotations that we will consider are local structures.  To
give an example, let~$G$ and~$\T$ be as in Setup~\ref{setup}. By
definition, there are no improving rotations in~$G$. Suppose, however, that
we find outside $\T$ two vertex-disjoint edges $uv$ and $u'v'$, and a
triangle $xyz$ of $\T$ with the property that $x$ is a common neighbour of
$uv$, and $y$ of $u'v'$. This structure allows us to rotate by replacing
$xyz$ with $uvx$ and $u'v'y$, a contradiction. The non-existence of this structure leads to an upper bound on the number of edges between $\M$ and $\T$.

\section{Small rotations}\label{sec:small}

In this section we will describe several rotations involving small numbers (one or two) of triangles, and show that their non-existence gives good bounds on the maximum number of edges within and between $\T_1$, $\T_2$, $\T_3$, $\M$ and $\I$. The bounds obtained on edges involving $\T_4$ are not strong enough for the proof of Theorem~\ref{thm:main}, but they are strong enough to prove the following lemma, which serves both as an illustration of our technique and as a necessary step in the proof of Theorem~\ref{thm:main}.

\begin{lemma}\label{lem:tri1} 
  Let $k\leq\frac{n-8}{5}$ be an integer and let $G$ be a
  $(k+1)\times K_3$-free graph on $n$ vertices. Then
  $e(G)\leq e\big(E_1(n,k)\big)$.
% \[e(G)\leq\max\left(e\big(E_1(n,k)\big),e\big(E_2(n,k)\big),e\big(E_3(n,k)\big)\right)~.\]
\end{lemma}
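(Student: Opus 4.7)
The plan is to apply Setup~\ref{setup} to an edge-maximal $G$, reduce $e(G)$ to an upper bound expressed as a function of the parameters $(t_1,t_2,t_3,t_4,m,i)$, and then show that this bound is maximised at the configuration realising~$E_1(n,k)$. Concretely, I would collect from the (forthcoming) Lemma~\ref{clm:NRupper} small-rotation upper bounds on edges within and between the six parts: at most $7$ edges between any two triangles of $\T_1$ (giving $e(\T_1)\le 3t_1+7\binom{t_1}{2}$), $e(\M)\le m^2$, $e(\M,\T_1)\le 4mt_1$, and the analogous but strictly weaker bounds ($8$ or $9$ edges per pair; smaller coefficients for $\M$-to-$\T_j$) for $j\ge 2$; together with bounds on edges incident to~$\I$ coming from $\I$ being independent and from the triangle-freeness of $G[V(\M)\cup\I]$ (which holds because any triangle there would enlarge~$\T$, contradicting $|\T|=k$), in particular $e(\M,\I)\le mi$. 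Summing these yields $e(G)\le F(t_1,t_2,t_3,t_4,m,i)$, a quadratic function of the six non-negative parameters subject to $t_1+t_2+t_3+t_4=k$ and $2m+i=n-3k$.

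Direct substitution verifies that $F(k,0,0,0,\lfloor(n-3k)/2\rfloor,(n-3k)\bmod 2)=e(E_1(n,k))$ exactly, since every bound used above is tight in $E_1(n,k)$. Hence it suffices to show that this point is the global maximum of~$F$ on the constraint polytope. Eliminating $i=n-3k-2m$ makes the $m$-slice $F(k,0,0,0,m,n-3k-2m)$ a concave parabola in~$m$ maximised precisely at $m=\lfloor(n-3k)/2\rfloor$, and the hypothesis $k\le(n-8)/5$, equivalent to $n-3k\ge 2k+8$, ensures that at this~$m$ we have $m\ge k+4$.

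It remains to show that, at such large~$m$, no redistribution of the $t_j$'s can increase~$F$. The coefficient $4m$ of~$t_1$ (from the $\M$-to-$\T_1$ bound) exceeds the corresponding coefficient for $j\ge 2$ by essentially $m-O(1)$, while the gain a triangle in~$\T_j$ provides through a slightly higher pair-coefficient (at most $9-7=2$ extra edges per pair) is only $O(k)$ per move; since $m\ge k+4$, the net change of~$F$ under any $t_1\to t_j$ swap is strictly negative. I expect the main obstacle to be controlling the $\T_4$ contribution cleanly: the generic pair-bound $9$ for~$\T_4$ is no better than the trivial $K_{3,3}$ bound, so the improvement from keeping triangles in $\T_1$ must come entirely from the matching-side coefficients and from careful handling of the inter-$\T_j$ and $\T_4$-$\I$ cross-terms. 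Once these coefficients are lined up, the proof reduces to a single quadratic inequality $F(k,0,0,0,m,i)\ge F(t_1,t_2,t_3,t_4,m,i)$, whose verification is routine but tedious---the sort of calculation the authors defer to Appendix~\ref{sec:max}.
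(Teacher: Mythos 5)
Your outline mirrors the paper's proof: apply Setup~\ref{setup}, sum the small-rotation bounds of Lemma~\ref{clm:NRupper} via~\eqref{eq:lpd} together with the trivial $e(\T_4)\le\binom{3t_4}{2}$ into a function $h(t_1,t_2,t_3,t_4,m,i)$, and show that $h$ is maximised by emptying $\T_3\cup\T_4$ into $\T_1$ and then (via Lemma~\ref{lem:maxf}) at the $E_1$-configuration. One point to tighten in your sketch is the order of maximisation: the paper establishes that shifting $t_3+t_4$ into $t_1$ gains $(t_3+t_4)(m+i-t_2-t_3-t_4-4)\ge 0$ for \emph{every} feasible $(m,i)$, using $m+i\ge(n-3k)/2\ge k+4$ in~\eqref{eq:KostaCafe} (this is exactly where $k\le(n-8)/5$ enters), rather than first optimising $m$ on the $t_1=k$ slice as you propose; it also does not commit to $t_1=k$ outright, since $t_2=k$ is a competing vertex of the polytope that Lemma~\ref{lem:maxf} shows loses only in this range.
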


Observe that, in contrast to Theorem~\ref{thm:main} we do not require any
lower bound on~$n$ in this lemma. Observe also that since $\tfrac{n-8}{5}<\tfrac{2n-8}{9}$, the result follows from Theorem~\ref{thm:Moon}: but its proof will exemplify our techniques and put us into position to
explain the remaining steps to obtain Theorem~\ref{thm:main}.

We assume in the following Setup~\ref{setup}.
We start with some simple upper bounds.

\begin{lemma}\label{clm:NRupper} The following bounds hold.
\begin{enumerate}[label=\abc]
  \item $e(\I)=0$.\label{I}
  \item $e(\I,\M)\leq im$.\label{IM}
  \item $e(\M)\leq m^2$.\label{M}
  \item $e(\M,\T_1)\leq 4mt_1$.\label{MT1}
  \item $e(\I,\T_1)\leq 2it_1$.\label{IT1}
  \item $e(\T_1)\leq 7\binom{t_1}{2}+3t_1$.\label{T1}
  \item $e(\I,\T_2)\leq 2it_2$.\label{IT2}
  \item $e(\T_2)\leq 8\binom{t_2}{2}+3t_2$.\label{T2}
  \item $e(\T_3)+e(\T_3,\T_4)\leq
8\binom{t_3}{2}+8t_3t_4+3t_3$.\label{T3T4}
\end{enumerate}
\end{lemma}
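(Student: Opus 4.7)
My plan is to prove the nine bounds through a single meta-strategy: each bound corresponds to a forbidden local configuration whose presence would enable a rotation contradicting the extremal choice of $(G,\T,\M)$ in Setup~\ref{setup}. Violating the bound would either let us exhibit $k+1$ vertex-disjoint triangles in $G$ (contradicting the $(k+1)\times K_3$-freeness of $G$), or produce an alternative triangle factor $\T'$ of size $k$ with a strictly larger maximum matching outside it (contradicting our choice of $\T$ to maximise $|\M|$).

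Parts (a)--(c) require no real rotation technology. Part (a) is immediate: two adjacent $\I$-vertices would extend $\M$. For (b), an $\I$-vertex $u$ adjacent to both endpoints $v,w$ of an $\M$-edge would yield the triangle $uvw$ disjoint from $V(\T)$, so $\T\cup\{uvw\}$ would contradict $(k+1)\times K_3$-freeness; thus each $\I$-vertex contributes at most one edge to each $\M$-edge. For (c), between two $\M$-edges $vw$ and $v'w'$, three or more bipartite edges among $\{vv',vw',wv',ww'\}$ yield a triangle on three of the four endpoints (using one $\M$-edge as a side and two bipartite edges), which is again disjoint from $V(\T)$; hence at most two edges between each pair of $\M$-edges, giving $e(\M)\le m+2\binom{m}{2}=m^2$. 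Part (i) follows directly from the construction of $\T_3$: ordering the triangles $T^{(1)},\ldots,T^{(t_3)}$ in the order they are moved to $S$, when $T^{(i)}$ is removed the residual set $D$ has $t_3-i+1+t_4$ triangles, so $T^{(i)}$ sends at most $8(t_3-i+t_4)$ edges to the remaining triangles of $D$. Summing over $i$ accounts for every edge of $e(\T_3,\T_4)$ and every edge between distinct $\T_3$-triangles, and adding the three edges inside each $\T_3$-triangle yields the claimed bound.

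Parts (d)--(h) are pairwise bounds, proved via a common rotation template: given a triangle $T=xyz\in\T_j$ with the witness(es) guaranteed by the definition of $\T_j$, assuming more than the allowed number of edges between $T$ and some local structure $L$ (an $\M$-edge, an $\I$-vertex, or another triangle of $\T$), the excess edges force a triangle covering $L$ while freeing a vertex of $T$; the witnesses of $T$ (and, where available, of $L$) then either give a second new triangle disjoint from everything else, producing $k+1$ vertex-disjoint triangles outright, or allow us to augment the matching on the freed vertex, contradicting maximality of $|\M|$. As a concrete illustration, in (e) if $u\in\I$ is adjacent to all three vertices of $T=xyz\in\T_1$ and $\M$-edge $pq$ sees $T$ at $x$, then $(\T\setminus\{T\})\cup\{uyz,\,pqx\}$ is a collection of $k+1$ vertex-disjoint triangles in $G$, a contradiction. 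Similar arguments handle (d), (f), (g), and (h); the numeric dichotomy $7$ in (f) versus $8$ in (h) reflects the presence of \emph{two} $\M$-edges witnessing each triangle of $\T_1$, versus the weaker mix (one $\M$-edge plus one $\I$-vertex, or two $\I$-vertices) available for $\T_2$.

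The main obstacle will be the bookkeeping in (d), (f), and (h), where multiple rotations on overlapping witnesses must be synchronised: in particular we must rule out that a witness of $T$ shares a vertex with $L$ or that the ``new'' matching edge we wish to add already lies in $\M$. I expect to resolve this by a compact case-analysis on the edge pattern between $T$ and $L$, exploiting the symmetry between the two $\M$-edge witnesses of any $\T_1$-triangle to halve the case work.
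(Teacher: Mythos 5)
Your plan matches the paper's proof essentially step by step: (a), (b), and (i) are argued identically (including the same bookkeeping on the $\T_3$ construction), and your rotation template for (d)–(h) is exactly the mechanism the paper uses, with the same witness structure from the definitions of $\T_1$ and $\T_2$. The one small variation is in (c), where you bound edges between each pair of $\M$-edges by $2$ and get $m+2\binom{m}{2}=m^2$, whereas the paper bounds each $\M$-vertex's degree inside $\M$ by $m$ and divides by two; both are two-line arguments using the same forbidden triangle, so this is cosmetic rather than a genuinely different route.
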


\begin{proof}
We leave to the reader the proof of~\ref{I}.

Suppose that a vertex $u\in \I$ sends more than $m$ edges to $\M$. Then
there is some edge $vw$ of $\M$ which receives two edges from $u$. So $uvw$
is a triangle of $G$, contradicting maximality of $|\T|$. Summing over
vertices of $\I$, bound~\ref{IM} follows.
Similarly, if a vertex of $\M$ was adjacent to more than~$m$ other vertices
of $\M$ this would contradict maximality of $\T$. Bound~\ref{M} follows.

If an edge $uv$ of $\M$ sends more than four edges to any triangle $T$ of
$\T_1$, then it must see two vertices of $T$. Since by definition of
$\T_1$ there is another edge $u'v'$ of $\M$ which sees a vertex of $T$,
there are two vertices $x,x'$ of $T$ such that $uvx$ and $u'v'x'$ are
triangles of $G$. This is an improving rotation which contradicts the maximality of
$\T$. Therefore, no such edge exists. Bound~\ref{MT1} follows by summation.
Similarly, if a vertex $u$ of $\I$ were to send three edges to a triangle $T$
of $\T_1$, then (using an edge of $\M$ which sees $T$) we would have an improving rotation
increasing the size of $\T$. Bound~\ref{IT1} follows.

Now suppose there were two triangles $uvw$ and $u'v'w'$ of $\T_1$ with more than
seven edges between them. By definition of $\T_1$ we can find disjoint edges
$xy$ and $x'y'$ of $\M$ such that $xy$ sees $u$ and $x'y'$ sees $u'$. Because
there are at least eight edges between $uvw$ and $u'v'w'$, there must be at
least three edges between $vw$ and $v'w'$. In particular, there is a triangle
contained in $\{v,w,v',w'\}$. Together with $xyu$ and $x'y'u'$ this is an improving
rotation increasing $\T$, contradicting the maximality of $|\T|$. This implies
bound~\ref{T1}.

Next, suppose there is a vertex $u$ of $\I$ which sends three edges to a
triangle $xyz$ of $\T_2$. We utilise the definition of $\T_2$ and infer that one of the two cases must occur. Either there is a second vertex $u'$
of $\I$ which sees two vertices $\{x,y\}$ of that triangle. 
Hence we can rotate and replace the triangle $xyz$ and the vertices $u$ and $u'$
by the triangle $xyu'$ and the edge $uz$, a contradiction. The other case when $xyz$ is seen by an edge of $\M$ can be treated similarly.
It follows that no vertex of
$\I$ sends three edges to any triangle of $\T_2$, hence bound~\ref{IT2}.

We now turn to proving~\ref{T2}. Suppose that there is a pair of triangles $xyz$ and $x'y'z'$ of $\T_2$ forming a copy of $K_6$. By the definition of $\T_2$ we either have that there are distinct vertices $u,u'\in\I$ which see respectively $xy$ and $x'y'$, or that there is a vertex $u\in I$ which sees $xy$ and an edge $ab\in \M$ disjoint from $u$ which is seen by $x'$. Suppose the former case. Then we have a similar improving rotation as above: we form $xyu$, $x'y'u'$, and $zz'$, a contradiction. An analogous improving rotation exists in the other case. This yields our bound~\ref{T2}.

Finally, we must show that $e(\T_3)+e(\T_3,\T_4)\leq
8\binom{t_3}{2}+8t_3t_4+3t_3$. This bound does not come from a
rotation. Instead, recall that $\T_3$ is formed sequentially. We claim that the
bound applies to every pair of sets $S$ and $D$ during the construction in Setup~\ref{setup}, that is,
that $e(S)+e(S,D)\le8\binom{|S|}{2}+8|S||D|+3|S|$. This is
trivially true at the first stage, when $S=\emptyset$. Now a triangle is moved from
$D$ to $S$ when it sends at most $8(|D|-1|)$ edges to the rest of $D$. So $|S|$
is increased by one, and $e(S)+e(S,D)$ is increased by at most $3+8(|D|-1)$.
Bound~\ref{T3T4} follows by induction.
\end{proof}

We next come to two bounds on edges within $\T$.

\begin{lemma}\label{clm:TiTj} The following bounds hold.
\begin{enumerate}[label=\abc,start=10]
  \item When $t_1\neq 1$ and $j\ge 2$, then $e(\T_1,\T_j)\leq 7t_1t_j$.\label{T1Ti}
  \item When $t_2\neq 1$ and $j\ge 3$, then $e(\T_2,\T_j)\leq 8t_2t_j$.\label{T2Ti}
\end{enumerate}
\end{lemma}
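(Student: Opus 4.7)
The plan is to establish per-pair bounds: for each $T \in \T_1$ and $T' \in \T_j$ with $j \ge 2$, one has $e(T, T') \le 7$; and for each $T \in \T_2$ and $T' \in \T_j$ with $j \ge 3$, one has $e(T, T') \le 8$. Summing over the $t_1 t_j$ (respectively $t_2 t_j$) pairs yields the claimed totals. As in the proofs of~\ref{T1} and~\ref{T2}, each per-pair bound will be proved by showing that if some pair exceeds the threshold, one can rotate $\T$ to a family of $k+1$ vertex-disjoint triangles, contradicting that $G$ is $(k+1)\times K_3$-free.

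We outline the plan for~\ref{T1Ti}; the argument for~\ref{T2Ti} is analogous with $8$ in place of $7$. Fix $T = uvw \in \T_1$ with two distinct $\M$-edge witnesses $xy$ and $x'y'$, and fix $T' = u'v'w' \in \T_j$ with $e(T,T') \ge 8$. A useful preliminary observation, in the spirit of the rotation used for~\ref{MT1}, is that both $\M$-witnesses of $T$ must see $T$ at a single common vertex: otherwise if $xy$ sees $u$ and $x'y'$ sees $v$ with $u \ne v$, then $xyu$ and $x'y'v$ are disjoint triangles whose insertion in place of $T$ already yields $k+1$ vertex-disjoint triangles. Denote this common vertex by $u$.

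When $T'$ admits a witness compatible with $T$'s---either an $\M$-edge $zt$ of $T'$ disjoint from $xy$, or an $\I$-vertex $p$ of $T'$---we replace $T$ and $T'$ in $\T$ by the witness-triangle $xyu$, a witness-triangle for $T'$ (either $ztu^{*}$ for some seen vertex $u^{*} \in T'$, or $pa'b'$ for a seen edge $a'b' \subseteq T'$), and a triangle on the three remaining vertices of $T \cup T'$. Because at most one of the nine cross edges between $T$ and $T'$ can be missing, and because flexibility in the choice of $T'$'s witness (when more than one is available, as when $T' \in \T_2$ has two $\I$-witnesses) lets us evade the single missing cross edge, such a residual cross triangle can always be found, producing the desired $k+1$ vertex-disjoint triangles.

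The remaining and main difficulty is when $T'$ has no suitable witness, which can happen only when $T' \in \T_3 \cup \T_4$, or when the $\I$-witnesses of some $T' \in \T_2$ sit in a particular configuration incompatible with the missing cross edge. Here we invoke the hypothesis $t_1 \ne 1$: choose a second triangle $T'' \in \T_1 \setminus \{T\}$ with $\M$-witness $x''y''$ at its special vertex $u''$, and perform an extended rotation removing $T, T', T''$ together with $xy, x''y''$ and inserting four new triangles---the witness-triangles $xyu$ and $x''y''u''$, a cross triangle in $T \cup T'$, and a further triangle assembled from the remaining vertices of $T \cup T' \cup T''$. The bound~\ref{T1} forces $e(T, T'') \le 7$, which together with the $\ge 8$ cross edges between $T$ and $T'$ and the special-vertex structure of both $T$ and $T''$ leaves just enough room to locate the four required triangles and raise $|\T|$ by one. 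The main obstacle is precisely this last case: verifying that the four replacement triangles always exist and fit together vertex-disjointly is a careful combinatorial check, and is exactly where the hypothesis $t_1 \ne 1$ (and analogously $t_2 \ne 1$) is used.
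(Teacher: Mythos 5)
Your plan rests on a per-pair bound $e(T,T')\le 7$ for every $T\in\T_1$ and $T'\in\T_j$, $j\ge 2$. That is not what the lemma asserts, and more importantly it is not available. Consider $T=uvw\in\T_1$ whose two $\M$-witnesses $ab$ and $a'b'$ both see only the vertex $u$ (the forced situation, as you correctly observe), and a triangle $T'=u'v'w'\in\T_4$ with all nine cross-edges present. The only local rotation is to replace $T,T'$ by $abu$ and, say, $vwu'$, leaving the edge $v'w'$. This neither increases $|\T|$ nor the matching number of $G-V(\T)$: the edge $v'w'$ simply substitutes for $ab$ in $\M$. Nothing contradicts the optimality of $\T$, so $e(T,T')=9$ can occur; the per-pair threshold is genuinely false.

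Your attempted repair via a second triangle $T''\in\T_1$ does not close this gap. You choose $T''$ arbitrarily, so you have no lower bound whatever on $e(T'',T')$ or $e(T'',T)$, and hence no way to guarantee that ``a further triangle assembled from the remaining vertices of $T\cup T'\cup T''$'' exists. You cite bound~\ref{T1}, i.e.\ $e(T,T'')\le 7$, as ``leaving just enough room,'' but that is an \emph{upper} bound on edges, whereas producing a triangle among leftover vertices requires a \emph{lower} bound; the inequality points the wrong way.

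The paper's proof is different in structure and this difference is essential. Fix $xyz\in\T_j$ and suppose $e(\T_1,xyz)\ge 7t_1+1$. By averaging, some $uvw\in\T_1$ sends at least eight edges to $xyz$, and---here $t_1\ge 2$ is used---some \emph{other} $u'v'w'\in\T_1$ sends at least seven (if $uvw$ sends exactly eight) or at least six (if $uvw$ sends nine). The rotation then uses disjoint $\M$-witnesses $ab$ of $uvw$ and $a'b'$ of $u'v'w'$ together with the guaranteed dense connections of \emph{both} $\T_1$-triangles to $xyz$ to produce four vertex-disjoint triangles out of $uvw,u'v'w',xyz,ab,a'b'$, contradicting the choice of $\T$. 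So the hypothesis $t_1\ne 1$ supplies a second $\T_1$-triangle that is \emph{known} to be well connected to $xyz$ via the averaging step; it is not used to supply an arbitrary spare triangle as in your proposal. Summing the resulting bound $e(\T_1,xyz)\le 7t_1$ over $xyz\in\T_j$ gives the lemma.
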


\begin{proof}
  We first show~\ref{T1Ti}. Since the case $t_1=0$ is trivial, we assume that
  $t_1\ge 2$. Let $xyz$ be a triangle in $\T_j$, for some $j\ge 2$, and suppose
  that there are at least $7t_1+1$ edges from $\T_1$ to $xyz$. Then certainly
  there is a triangle $uvw\in\T_1$ which sends at least eight edges to $xyz$.
  There are two possibilities.
  
  First, suppose $uvw$ sends exactly eight edges to
  $xyz$. Then there is another triangle $u'v'w'\in\T_1$ which sends at least
  seven edges to $xyz$. By definition of $\T_1$, there are distinct edges $ab$
  and $a'b'$ of $\M$ such that $ab$ sees $u$ and $a'b'$ sees $u'$. Since there
  are seven edges from $u'v'w'$ to $xyz$, $v'w'$ must have a common neighbour
  $x$; since there are eight edges from $xyz$ to $uvw$, $yz$ must have two
  common neighbours in $uvw$, and in particular one, say $v$, which is not $u$.
  Then replacing $uvw$, $u'v'w'$ and $xyz$ with $abu$, $a'b'u'$, $v'w'x$ and
  $yzv$ is an improving rotation, a contradiction.
  
  Second, suppose $uvw$ sends nine edge to $xyz$. Then there is another triangle
  $u'v'w'$ of $\T_1$ which sends at least six edges to $xyz$. Again we assume
  $ab\in\M$ sees $u$, and $a'b'\in\M$ sees $u'$. Now at least one of $v'$ and
  $w'$, say~$v'$, must have two neighbours in $xyz$, say $x$ and $y$. Since
  $xyz$ sends nine edges to $uvw$, $zvw$ is a triangle. Then replacing $uvw$,
  $u'v'w'$ and $xyz$ with $abu$, $a'b'u'$, $v'xy$ and $zvw$ is an improving
  rotation, a contradiction. The bound~\ref{T1Ti} follows by summation.
  
  We now show~\ref{T2Ti}. Again, we assume $t_2\ge 2$ and suppose $xyz\in\T_j$
  for some $j\ge 3$ sends at least $8t_2+1$ edges to $\T_2$. Then there are
  triangles $uvw$ and $u'v'w'$ of $\T_2$ to which $xyz$ sends respectively nine
  and at least eight edges. 
%By definition of $\T_2$, there are distinct vertices
%$a$ and $a'$ of $\I$ which see respectively $uv$ and $u'v'$. 
  We now use the fact that $uvw,u'v'w'\in\T_2$ to infer the following: either there are distinct vertices
$a$ and $a'$ of $\I$ which see respectively~$uv$ and~$u'v'$, or there is a vertex $a\in\I$ and an edge $bc\in\M$ such that $a$ sees $uv$ and~$u'$ sees $bc$. Let us consider the first case. Now $w'$ is
  adjacent to at least two vertices of $xyz$, say $x$ and $y$, and $zw$ is an
  edge. Therefore replacing $uvw$, $u'v'w'$ and $xyz$ by $auv$, $a'u'v'$ and
  $w'xy$ maintains the number of triangles of $\T$, but allows us to add $zw$ to
  $\M$, and is thus an improving rotation, a contradiction. Next we consider the case when there is a vertex $a\in\I$ and an edge $bc\in\M$ such that $a$ sees $uv$ and $u'$ sees $bc$. There is a vertex of $xyz$ which sees $v'w'$, say $x$. Then replacing $uvw$, $u'v'w'$ and $xyz$ by $bcu'$, $v'w'x$, $yzw$, and $auv$ is an improving rotation, again a contradiction. The bound~\ref{T2Ti} follows by
  summation.
\end{proof}

Our next task is to bound the edges between $\M$ and $\T_j$, $j\ge 2$, and
between $\I$ and $\T_j$, $j\ge 3$. We combine these bounds with those given in
Lemma~\ref{clm:TiTj} because they permit us to handle the cases $t_1=1$ and
$t_2=1$ which were not dealt with in Lemma~\ref{clm:TiTj}. However, in the proof
of Theorem~\ref{thm:main} we will find that we require both sets of bounds.

\begin{lemma}\label{clm:InTupper} 
The following bounds hold. 
\begin{enumerate}[label=\abc,start=12]
  \item \[e(\T_1,\T_2)+e(\M,\T_2)\leq
  \begin{cases}
    7t_1t_2+\left(2+3m\right)t_2 & \mbox{if } m\ge 1\; \mbox{and}\\
    0 & \mbox{if } m=0 \;.
  \end{cases}\]\label{T2,T1+M}
  \item When $j=3,4$ we have \[e(\T_1,\T_j)+e(\M,\T_j)\leq
  \begin{cases}
    7t_1t_j+\left(3+3m\right)t_j & \mbox{if } m\ge 1 \; \mbox{and}\\
    0 & \mbox{if } m=0\;.
  \end{cases}\]\label{Ti,T1+M}
  \item When $j=3,4$ we have \[e(\T_2,\T_j)+e(\I,\T_j)\leq
  \begin{cases}
    8t_2t_j+\left(2+i\right)t_j & \mbox{if } i\ge 1\; \mbox{and}\\
    0 & \mbox{if } i=0\;.
  \end{cases}\]\label{Ti,T1+I}
\end{enumerate}
\end{lemma}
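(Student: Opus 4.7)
I will prove all three inequalities by establishing a per-triangle bound for $T=xyz\in\T_j$ and then summing over $\T_j$. The boundary cases $m=0$ in~\ref{T2,T1+M} and~\ref{Ti,T1+M} (respectively $i=0$ in~\ref{Ti,T1+I}) are immediate, since every triangle in $\T_1$ needs an $\M$-edge witness and every triangle in $\T_2$ needs an $\I$-vertex witness; so $m=0$ forces $t_1=0$ and $i=0$ forces $t_2=0$, with the other term zero trivially.

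For the $\M$-contribution in~(a) and~(b), since $T\notin\T_1$ at most one $\M$-edge (call it $pq$) sees~$T$, and every other $\M$-edge sends at most three edges to~$T$---a fourth edge would force both its endpoints adjacent to a common vertex of~$T$. In case~(a), the $\I$-witness $c$ of $T\in\T_2$ allows me to rule out $e(pq,T)=6$: if $p,q$ were adjacent to all of $x,y,z$ and $c$ saw the edge~$xy$, the replacement $T,pq\mapsto pqz,cxy$ would produce $k+1$ vertex-disjoint triangles, contradicting Setup~\ref{setup}. Hence $e(\M,T)\le 3(m-1)+5=3m+2$. In case~(b) no $\I$-witness is available, so the trivial $e(pq,T)\le 6$ gives $e(\M,T)\le 3m+3$. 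For the $\I$-contribution in~(c), two $\I$-vertices each sending at least two edges to~$T$ would both see an edge of~$T$ and so force $T\in\T_2$; hence at most one $\I$-vertex sends more than one edge to~$T$ (sending at most three), giving $e(\I,T)\le (i-1)+3=i+2$.

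For the $\T_1$- and $\T_2$-contributions, Lemma~\ref{clm:TiTj} yields $e(\T_1,T)\le 7t_1$ when $t_1\ge 2$ and $e(\T_2,T)\le 8t_2$ when $t_2\ge 2$. The boundary cases $t_1=1$ or $t_2=1$ require a combined rotation. Observe first that the two witness $\M$-edges of any $\T_1$-triangle $T'$ must see a common vertex $u_{T'}$: otherwise, writing them as $ab$ seeing $u$ and $a'b'$ seeing $v$ with $u\neq v$, the simple replacement $T'\mapsto abu,\,a'b'v$ would already yield $k+1$ vertex-disjoint triangles. Using this structural rigidity together with the witness(es) of~$T$, and performing a case analysis on which edge of~$T$ is seen by the witness and on the pattern of missing cross-edges between~$T$ and~$T'$, I show that in every sub-case either $e(T',T)\le 7$---so that adding the already-proven $\M$- (resp.\ $\I$-) bound gives the stated per-triangle inequality---or $e(T',T)\le 8$ in a sub-configuration where either $e(\M,T)\le 3m$ (resp.\ $e(\I,T)\le i-1$) saves the needed slack, or where an alternative rearrangement produces an improving rotation. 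Summing the per-triangle estimates over $T\in\T_j$ yields the three claimed bounds.

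The main obstacle is the case analysis for these combined rotations: depending on which witnesses (edge or vertex) of~$T$ and~$T'$ are present, on which vertex of~$T'$ the $\M$-edges see, on which edge of~$T$ the $\I$-vertex sees, and on which cross-edge between $V(T)$ and $V(T')$ is missing, one must construct a rotation that either strictly increases $|\T|$ or keeps $|\T|$ the same while strictly increasing the matching number of $G-V(\T)$. Verifying each sub-case is routine but requires careful bookkeeping of vertex-disjointness and of which new triangles or matching edges are admissible.
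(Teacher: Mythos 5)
Your treatment of the easy parts matches the paper's: the $m=0$ and $i=0$ boundary cases, the per-triangle bounds $e(\M,T)\le 3m+2$, $e(\M,T)\le 3m+3$, and $e(\I,T)\le i+2$, and the appeal to Lemma~\ref{clm:TiTj} when $t_1,t_2\neq 1$ are all fine. Your extra observation---that the $\M$-edges witnessing a $\T_1$-triangle $T'$ must all see the same vertex of $T'$, since two of them seeing distinct vertices would replace $T'$ by two disjoint triangles and yield $(k+1)\times K_3$---is also correct, though the paper does not need it. The problem is that the boundary cases $t_1=1$ (for parts~\ref{T2,T1+M} and~\ref{Ti,T1+M}) and $t_2=1$ (for part~\ref{Ti,T1+I}), which are the only content of the lemma beyond stacking earlier bounds, are never actually handled: you describe a case analysis (on which edge of $T$ the witness sees, which cross-edges between $T$ and $T'$ are missing) and simply assert its outcome, and you explicitly flag at the end that the sub-cases are unverified. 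That is exactly where the lemma requires new work, so as written the proof has a genuine gap there.

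For comparison, the paper's treatment of these boundary cases is a short dichotomy that does not need the structural-rigidity observation at all. Take $t_1=1$, let $T=xyz\in\T_2$ and let $T'$ be the unique $\T_1$-triangle. Either $e(T,T')\le 7$, in which case the already-proved $e(\M,T)\le 3m+2$ gives a per-triangle total of at most $3m+9$; or $e(T,T')\ge 8$, in which case one shows directly that no $\M$-edge sees $T$: if $cd\in\M$ saw some $x\in T$, pick $a'b'\in\M$ with $a'b'\neq cd$ seeing some $u\in T'$ (possible since $T'\in\T_1$), and note $e(T\setminus\{x\},T'\setminus\{u\})\ge 8-3-2=3$, so the four remaining vertices of $T\cup T'$ span a triangle; replacing $\{T,T'\}$ by these three disjoint triangles is an improving rotation. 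Hence $e(\M,T)\le 3m$ and the total is again at most $3m+9$. An identical dichotomy (on $e(T,T')\le 8$ versus $e(T,T')=9$, using $\I$-witnesses chosen as in the proof of bound~\ref{T2} of Lemma~\ref{clm:NRupper}) disposes of $t_2=1$. Replacing your sketched multi-way case analysis by this dichotomy would close the gap.
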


\begin{proof}
  First we prove~\ref{T2,T1+M}. Observe that if $m=0$ then by
  definition of $\T_1$ we have also $t_1=0$, and the bound follows. Now by
  definition of $\T_2$, any triangle $xyz\in\T_2$ is seen by at most one edge $ab$
  in $\M$. It follows that all other edges of $\M$ send at most three edges to
  $xyz$. Furthermore, if $ab$ sent six edges to $xyz$, then we would find an
  improving rotation as follows. Let $c\in\I$ be a vertex which sees (say) the edge $xy$ in
  $xyz$, whose existence is guaranteed by definition of $\T_2$. Now $cxy$ and
  $abz$ are disjoint triangles which can replace $xyz$ to increase the size of $\T$. It
  follows that $xyz$ sends at most $5+3(m-1)=3m+2$ edges to $\M$.
  
  If $t_1\neq 1$, then summing over $\T_2$ together with the bound~\ref{T1Ti} of Lemma~\ref{clm:TiTj}
  gives the desired bound~\ref{T2,T1+M}. If $t_1=1$, then we must work a little
  harder. Either $xyz\in\T_2$ sends at most seven edges to the triangle $uvw\in\T_1$,
  in which case $xyz$ sends in total at most $3m+7+2$ edges to $\T_1\cup\M$, or
  $xyz$ sends more than seven edges to $uvw$. In this case, we claim that no
  edge of~$\M$ sees~$xyz$, or we would have an improving rotation exactly as in
  the proof of bound~\ref{T1} of Lemma~\ref{clm:TiTj}. It follows that $xyz$ sends at most $3m$ edges to~$\M$, and so in total again at most $3m+9$ edges to $\T_1\cup\M$. Now
  summation yields the desired bound~\ref{T2,T1+M}.
  
  We next prove~\ref{Ti,T1+M}. Suppose $j\in\{3,4\}$. Again the $m=0$ case is
  trivial. Again by definition of $\T_j$, at most one edge in $\M$ sees the triangle $xyz\in\T_j$,
  and thus we have that $xyz$ sends at most $3m+3$ edges to $\M$. Again, if
  $t_1\neq 1$ then summation combined with the bound~\ref{T1Ti} of Lemma~\ref{clm:TiTj} yields the
  desired bound~\ref{Ti,T1+M}. Again, if $t_1=1$ then we either have that $xyz$
  sends at most seven edges to the triangle $uvw\in\T_1$, and so in total
  $3m+10$ edges to $\T_1\cup\M$, or it sends more than seven edges to $uvw$ but
  is not seen by any edge of $\M$ (or this would create an improving rotation), and so sends at most $3m+9$ edges to
  $\T_1\cup\M$. Again the desired bound~\ref{Ti,T1+M} follows by summation.
  
  Finally we prove the bound~\ref{Ti,T1+I}. Suppose $j\in\{3,4\}$. Observe that
  if $i=0$ then we have by definition of $\T_2$ that $t_2=0$ and hence the bound
  follows. Now by definition of $\T_j$, at most one vertex of $\I$ sees the
  triangle $xyz\in\T_j$, and all other vertices of $\I$ therefore send at most
  one edge to $xyz$. We conclude that $xyz$ sends at most $3+(i-1)=i+2$ edges
  to $\I$. If $t_2\neq 1$, then summation and the bound~\ref{T2Ti} of Lemma~\ref{clm:TiTj} yield the
  desired bound~\ref{Ti,T1+I}. If $t_2=1$, then there are two
  possibilities. First, $xyz$ sends at most eight edges to the triangle
  $abc\in\T_2$, in which case it sends in total at most $10+i$ edges to
  $\T_2\cup\I$. Second, $xyz$ sends nine edges to $abc$, in which case there can
  exist no vertex of $\I$ which sees $xyz$ or we would have an improving
  rotation exactly as in the proof of bound~\ref{T2} of Lemma~\ref{clm:TiTj}. Then $xyz$ sends in total
  at most $i+9$ edges to $\T_2\cup\I$. The desired bound~\ref{Ti,T1+I} follows
  by summation.
\end{proof}

Observe that, at this stage, we provided bounds for all (bipartite or internal)
edge sets but $e(\T_4)$. These bounds, with the exception of the bounds on edges
in $\T_4\cup\M\cup\I$, will turn out to be strong enough for all parts of the
proof of Theorem~\ref{thm:main}. It is convenient to summarise them in one
function. First, let
\begin{align}\label{eq:deffprime}\begin{split}
\index{$f'(\ttt_1,\ttt_2,\ttt_3,\ttt_4,\mmm,\iii)$}
    f'(\ttt_1,\ttt_2,\ttt_3,\ttt_4,\mmm,\iii):= &
    4\mmm\ttt_1+2\iii\ttt_1+7\binom{\ttt_1}{2}+3\ttt_1+2\iii\ttt_2\\ &
    + 8\binom{\ttt_2}{2}+3\ttt_2 +8\binom{\ttt_3}{2}+ 8\ttt_3\ttt_4+3\ttt_3\\
    &
    + 7\ttt_1\ttt_2+(2+3\mmm)\ttt_2
    +7\ttt_1(\ttt_3+\ttt_4) \\
    & +(3+3\mmm)\ttt_3+8\ttt_2(\ttt_3+\ttt_4)+(2+\iii)\ttt_3   \;.
\end{split}
\end{align}

We now define $f(\ttt_1,\ttt_2,\ttt_3,\ttt_4,\mmm,\iii)$ by
\begin{equation}\label{eq:deffixed}
  \begin{split}
  \index{$f(\ttt_1,\ttt_2,\ttt_3,\ttt_4,\mmm,\iii)$}
    f:=
    \begin{cases}
    f'(\ttt_1,\ttt_2,\ttt_3,\ttt_4,\mmm,\iii) & \mbox{when } \mmm\ge 1 \mbox{
    and } \iii\ge 1\\
    f'(\ttt_1,\ttt_2,\ttt_3,\ttt_4,\mmm,\iii)-(2\ttt_2+3\ttt_3) & \mbox{when } \mmm=0 \mbox{ and } \iii\ge 1\\
    f'(\ttt_1,\ttt_2,\ttt_3,\ttt_4,\mmm,\iii)-2\ttt_3 & \mbox{when } \mmm\ge 1
    \mbox{ and } \iii=0 \\
    f'(\ttt_1,\ttt_2,\ttt_3,\ttt_4,\mmm,\iii)-(2\ttt_2+5\ttt_3) & \mbox{when }
    \mmm=0 \mbox{ and } \iii=0 \\
  \end{cases}
  \end{split}
\end{equation}

The purpose of the functions $f$ and $f'$ is the following. When $t_1,t_2\neq 1$, we have by
summing the bounds in parts~\ref{MT1}--\ref{T3T4} of Lemma~\ref{clm:NRupper},
the $j=4$ cases of parts~\ref{T1Ti} and~\ref{T2Ti} of Lemma~\ref{clm:TiTj},
part~\ref{T2,T1+M} of Lemma~\ref{clm:InTupper} and the $j=3$ cases of parts~\ref{Ti,T1+M} and~\ref{Ti,T1+I}
of Lemma~\ref{clm:InTupper} that $$e(G)-e(\T_4\cup\M\cup\I)\le
f(t_1,t_2,t_3,t_4,m,i)\;.$$ We observe that the reason that $f$ and $f'$
differ is that Lemma~\ref{clm:InTupper} yields different bounds depending on
whether $m$ or $i$ is zero, i.e., we have 
$$e(G)-e(\T_4\cup\M\cup\I)\le
f'(t_1,t_2,t_3,t_4,m,i)\;.$$ We further observe that although $e(G)-e(\T_4\cup\M\cup\I)\le
f(t_1,t_2,t_3,t_4,m,i)$ is valid in general only when $t_1,t_2\neq 1$, by
parts~\ref{Ti,T1+M} and~\ref{Ti,T1+I} of Lemma~\ref{clm:InTupper} the following is always valid.
\begin{align}\label{eq:lpd}
\begin{split}
e(G)&-e(\T_4\cup\M\cup\I)+e(\T_4,\M\cup \I)\\
    & \le e(G)-e(\T_4\cup\M\cup\I)+e(\T_4,\M\cup \I)+e(\T_1\cup \T_2,\T_4) \\
    & \le f(t_1,t_2,t_3,t_4,m,i)+(3+3m)t_4+(2+i)t_4   \;.
\end{split}
\end{align}

As previously mentioned, our proof has a combinatorial part and an arithmetic
part: we need to know the maxima of several functions, of which $f$ is the
first. We state the required lemma here, but defer the proof to Appendix~\ref{sec:max}.
Let
\begin{align}\label{eq:DefF}\index{$F(n,k)$}
\begin{split}
  F(n,k):=\big\{&(\ttt_1,\ttt_2,\ttt_3,\ttt_4,\mmm,\iii)\in\NATS^6_0\,\colon\,\\
  &~~~\ttt_1+\ttt_2+\ttt_3+\ttt_4=k\,,2\mmm+\iii=n-3k\big\}\,.
\end{split}  
\end{align}

\begin{lemma}\label{lem:maxf}
  When $n\ge 3k+2$ we have
 \[\max_{(\ttt_1,\ttt_2,0,0,\mmm,\iii)\in
      F(n,k)}\big(f(\ttt_1,\ttt_2,0,0,\mmm,\iii)+\iii\mmm+\mmm^2\big)=\max_{j\in[3]}e\big(E_j(n,k)\big)\,.\]
\end{lemma}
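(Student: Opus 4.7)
The plan is to turn this into a two-dimensional optimisation and then exploit the quadratic shape of the resulting function. Using $\ttt_2 = k - \ttt_1$, $\iii = n - 3k - 2\mmm$, and $\ttt_3 = \ttt_4 = 0$, a direct substitution into~\eqref{eq:deffprime} together with the identity $\iii\mmm + \mmm^2 = (n-3k)\mmm - \mmm^2$ would show that in the generic range $\mmm \ge 1$ the target function simplifies to
\[
g(\ttt_1, \mmm) = \tfrac12\ttt_1^2 + \mmm\ttt_1 - \mmm^2 - \bigl(k + \tfrac32\bigr)\ttt_1 + (n-4k)\mmm + 2kn - 2k^2 + k\,,
\]
with a piecewise correction of $-2(k-\ttt_1)$ on the slice $\mmm = 0$ inherited from~\eqref{eq:deffixed}. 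The Hessian of $g$ in $(\ttt_1,\mmm)$ is the indefinite matrix $\left(\begin{smallmatrix}1 & 1\\ 1 & -2\end{smallmatrix}\right)$ of determinant $-3$, so $g$ is saddle-shaped and its maximum over the integer rectangle $\{(\ttt_1,\mmm) : 0 \le \ttt_1 \le k,\ 0 \le \mmm \le \lfloor(n-3k)/2\rfloor\}$ must be attained on the boundary.

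I would then inspect each boundary edge, on which $g$ reduces to a one-variable quadratic whose maximiser is either an endpoint or the easily computed critical point. On $\ttt_1 = k$ (the ``$\T = \T_1$'' regime suggested by $E_1(n,k)$), $g$ is concave in $\mmm$ with optimum at $\mmm = (n-3k)/2$, evaluating after rounding to $e(E_1(n,k))$. On $\ttt_1 = 0$ (the ``$\T = \T_2$'' regime of $E_2$ and $E_3$), $g$ is again concave in $\mmm$ with unconstrained optimum $\mmm = (n-4k)/2$; when this lies in $[1,\lfloor(n-3k)/2\rfloor]$ (equivalently, $k \le (n-2)/4$) it would yield $e(E_2(n,k))$, and otherwise the maximiser would slide to $\mmm = 1$, giving $e(E_3(n,k))$. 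The remaining edges $\mmm = 0$ and $\mmm = \lfloor(n-3k)/2\rfloor$ are convex in $\ttt_1$, so their maxima occur at corners already covered. For the matching lower bound I would exhibit the triples $(\ttt_1,\mmm) = (k,\lfloor(n-3k)/2\rfloor)$, $(0,\lceil(n-4k)/2\rceil)$, and $(0,1)$, which lie in $F(n,k)$ and realise $e(E_1)$, $e(E_2)$, and $e(E_3)$ respectively.

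The main obstacle I anticipate is the bookkeeping of integrality and parity: the critical values $(n-3k)/2$ and $(n-4k)/2$ need not be integers, so one has to round and verify that the resulting integer maximisers differ from the exact formulas in~\eqref{eq:defEi} only by the floor and ceiling contributions already present there. The piecewise jump of $f$ at $\mmm = 0$ must also be shown never to create a new maximiser, which should be easy since the correction $-2(k-\ttt_1)$ is non-positive and the $\mmm = 1$ value $e(E_3(n,k))$ already dominates it. These routine but tedious checks are exactly the sort of symbolic computation that the Maxima scripts accompanying the paper are designed to handle.
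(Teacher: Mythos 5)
Your proposal is essentially the paper's own argument, reorganised. The paper proves this as part~\itm{iii} of Lemma~\ref{lem:MaxFixFn}: part~\itm{i} observes (via~\eqref{eq:ft1t2}) that $f$ is a positive quadratic along the $\ttt_1\mapsto\ttt_1+x,\ttt_2\mapsto\ttt_2-x$ direction, hence one may assume $\ttt_1\in\{0,k\}$; part~\itm{iii} then maximises over $\mmm$ separately on the two resulting edges (including the special treatment of $\mmm=0$), landing on $e(E_1)$, $e(E_2)$, $e(E_3)$ exactly as you describe. Your two-variable reduction to $g(\ttt_1,\mmm)$ and the resulting edge-by-edge analysis are the same calculation.

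One step deserves rephrasing. The claim ``indefinite Hessian, so the maximum over the integer rectangle is on the boundary'' is a statement about the continuous problem, not about lattice points: for a general indefinite bivariate quadratic, the convex eigendirection need not pass through lattice points, so the deduction is not automatic for integers. What actually does the work here (and what the paper uses) is that the $\ttt_1^2$-coefficient of $g$ is $\tfrac12>0$, so for each fixed $\mmm$ the restriction is a positive quadratic in $\ttt_1$, forcing $\ttt_1\in\{0,k\}$; concavity in $\mmm$ then handles each edge. You in fact invoke this axis-wise convexity for the edges $\mmm=0$ and $\mmm=\lfloor(n-3k)/2\rfloor$, so the fix is only to lead with it in place of the Hessian remark. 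Your treatment of the $\mmm=0$ jump in $f$ and the rounding of the critical points is correct and matches the paper's bookkeeping; the only further small point is that the case $k=0$ should be disposed of separately (as the paper does) since there $\ttt_1=\ttt_2=0$ and the $\mmm=0$ versus $\mmm\ge1$ comparison becomes degenerate.
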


A trivial upper bound for $e(\T_4)$ is given by
\begin{equation}\label{T4}
e(\T_4)\leq\binom{3t_4}{2}.
\end{equation}
It turns out that this trivial bound suffices to prove
Lemma~\ref{lem:tri1} (but not Theorem~\ref{thm:main}). We define $h(\ttt_1,\ttt_2,\ttt_3,\ttt_4,\mmm,\iii)$ by
\begin{equation}\label{eq:defh}
\index{$h(\ttt_1,\ttt_2,\ttt_3,\ttt_4,\mmm,\iii)$}
h:=f(\ttt_1,\ttt_2,\ttt_3,\ttt_4,\mmm,\iii)+\iii\mmm+\mmm^2+(3+3\mmm)\ttt_4+(2+\iii)\ttt_4+\binom{3\ttt_4}{2}\;.
\end{equation}

%Note that in the range Lemma~\ref{lem:tri1}, we have $e\big(E_1(n,k)\big)=\max_i e\big(E_i(n,k)\big)$, cf.\ Table~\ref{tab:transitions}. Thus, to prove the lemma, we can apply Lemma~\ref{lem:maxf}. Then, it suffices to show that $e(G)\le h(t_1,t_2,t_3,t_4,m,i)$ and that, when $k\le\frac{n-8}{5}$, the function $h(\ttt_1,\ttt_2,\ttt_3,\ttt_4,\mmm,\iii)$ is maximised on $F(n,k)$ when $\ttt_3=\ttt_4=0$.

\begin{proof}[Proof of Lemma~\ref{lem:tri1}]
  Let $k\le\frac{n-8}5$. Let~$G$ and its decomposition be as in
  Setup~\ref{setup}. In particular, we obtain numbers $t_1,\ldots,t_4,m,i$.
%Observe that the function~$h$ defined in~\eqref{eq:defh} is
%  exactly the sum of the upper bounds in~\eqref{T4} and in \ref{I}--\ref{T3T4}
%  of Lemma~\ref{clm:NRupper} and in \ref{T2,T1+M}--\ref{Ti,T1+I} of
%  Lemma~\ref{clm:InTupper}, that is,
  By~\eqref{eq:lpd}, \ref{I}--\ref{M} of Lemma~\ref{clm:NRupper}, and~\eqref{T4} 
  we have $e(G)\le h(t_1,t_2,t_3,t_4,m,i)$ for the function~$h$ defined in~\eqref{eq:defh}.
From~\eqref{eq:deffprime},~\eqref{eq:deffixed}, and~\eqref{eq:defh} one can check that
  \[h(t_1,t_2,0,t_3+t_4,m,i)\ge h(t_1,t_2,t_3,t_4,m,i)\;.\] 
%   since it holds
% $$8{t_3\choose 2}+8t_3t_4+3t_3+{3t_4\choose 2}\le {3(t_3+t_4)\choose 2}\; \mbox{, and}$$every other term
%   containing $t_3$ has a corresponding term containing $t_4$ whose coefficient
%   is at least as large. 
  Also from~\eqref{eq:defh} we have the following.
  \begin{align}
  \nonumber
    &
    h(t_1+t_3+t_4,t_2,0,0,m,i)-h(t_1,t_2,0,t_3+t_4,m,i) \\
%    & = (4m+2i)(t_3+t_4)-2\binom{t_3+t_4}{2}-(t_2+3+3m+2+i)(t_3+t_4) \\ 
  \label{eq:KostaCafe}  
    & = (t_3+t_4)(m+i-t_2-t_3-t_4-4)\\
    \nonumber
    & \ge (t_3+t_4)\frac{n-5k-8}{2}\,,
  \end{align}
  where the inequality comes from $t_2+t_3+t_4\le k$ and $2m+i=n-3k$.
  Since $n-5k-8\ge 0$, we have
  \[h(t_1+t_3+t_4,t_2,0,0,m,i)\ge h(t_1,t_2,t_3,t_4,m,i)\,.\]
  Now $h(t_1+t_3+t_4,t_2,0,0,m,i)=f(t_1+t_3+t_4,t_2,0,0,m,i)+im+m^2$, so by
  Lemma~\ref{lem:maxf} we have \[e(G)\le
  h(t_1,t_2,t_3,t_4,m,i)\le\max_{j\in[3]}e\big(E_j(n,k)\big)\,.\]
  Finally, according to Table~\ref{tab:transitions}, this maximum is given by $e\big(E_1(n,k)\big)$, completing the proof.
\end{proof}

\section{Large rotations}
\label{sec:large}

In order to prove Theorem~\ref{thm:main} we need to improve the bounds given in the previous section on the number of edges touching $\T_4$; in particular, we need stronger bounds than the trivial $e(\T_4)\le\binom{3|\T_4|}{2}$. We will obtain these stronger bounds by describing rotations using many more---up to $29$---triangles. In constructing these rotations, we will need to assume that $\T_4$ does not contain too \emph{few} edges, which will lead to a case distinction in the proof of Theorem~\ref{thm:main}.

Recall that by definition of
$\T_4$, every triangle in $\T_4$ sends more than $8(t_4-1)$ edges to the other
triangles of $\T_4$, which should be seen as something like a `minimum degree'
condition. Imposing the further condition $e(\T_4)\ge 8\binom{t_4}{2}+10t_4-27$
has the consequence that there must exist some pairs of triangles in $\T_4$
which are connected by nine edges; the combination of the two features makes
$\T_4$ an exceptionally good place for construction of complex rotations. Our
aim is to take advantage of this in order to provide a good bound on
$e(\T_4\cup\M\cup\I)$.

Unfortunately, this will mean that we can no longer use Lemma~\ref{clm:InTupper}
to provide us with our upper bounds on $e(\T_1\cup\M,\T_4)$ and $e(\T_2\cup
\I,\T_4)$, and we will be forced to use instead Lemma~\ref{clm:TiTj}. This lemma
only gives bounds on $e(\T_1,\T_4)$ when $t_1\neq 1$, and on $e(\T_2,\T_4)$ when
$t_2\neq 1$, which causes a problem that we must now deal with. Consequently, if either
$t_1=1$ and the triangle in $\T_1$ sends more than $7t_4+18$ edges to $\T_4$, or $t_2=1$
and the triangle in $\T_2$ sends more than $8t_4$ edges to $\T_4$, or both, we
will have to handle these one or two exceptional triangles along with $\T_4$.
Fortunately, this adds only a slight complication.

Let \index{$\T_5$}$\T_5$ contain all triangles of $\T_4$, together with $\T_1$ if $t_1=1$ and
$e(\T_1,\T_4)>7t_4+18$, and with $\T_2$ if $t_2=1$ and $e(\T_2,\T_4)>8t_4$. Let
$t_5=|\T_5|$. That is, we have $t_4\le t_5\le t_4+2$.

First, the fact that every triangle in $\T_4$ sends more than $8(t_4-1)$ edges
to the other triangles of $\T_4$ makes $\T_4$ well connected. The following
definition makes this precise.

\begin{definition}[connect, favour]\index{connect}\index{favour}
  Given two triangles~$T$ and~$T'$, we say that a third
  triangle~$T''$ \emph{connects} $T$ to $T'$, or that there is a \emph{connection} from $T$ to $T'$ via $T''$, if one of the following two
  conditions holds.
  \begin{enumerate}[label=\rom]
    \item There are at least $8$ edges from $T''$ to both $T$ and $T'$, or
    \item There are $9$ edges from $T''$ to $T$, and at least $7$ from
      $T''$ to $T'$.
  \end{enumerate}
  To emphasise that the definition is not symmetric in $T$ and $T'$ we say that
  the connection \emph{favours} $T$ and also write \index{$\conn$}$T\conn T''\conn T'$.
\end{definition}

We show that two triangles in $\T_4$ can be connected in many
different ways.

\begin{lemma}\label{ConnWidg} 
  For any pair of distinct triangles $T$ and $T'$ of $\T_4$, there are at least
  $\frac{1}{12}(t_4-2)$ triangles $T''\in\T_4$ with
  $T\conn T''\conn T'$.
\end{lemma}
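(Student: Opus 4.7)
The plan is a weighted double-counting argument built on the defining density property of $\T_4$: every triangle of $\T_4$ sends more than $8(t_4-1)$, hence at least $8t_4-7$, edges to the other $t_4-1$ triangles of $\T_4$. Set $\alpha:=e(T,T')\leq 9$, and for each $T''\in\T_4\setminus\{T,T'\}$ write $a:=e(T'',T)$ and $b:=e(T'',T')$. Applying the density property to $T$ and subtracting the at most $\alpha$ edges that go to $T'$, and symmetrically for $T'$, we obtain
\[
\sum_{T''}a(T'')\;\geq\;8t_4-7-\alpha
\qquad\text{and}\qquad
\sum_{T''}b(T'')\;\geq\;8t_4-7-\alpha.
\]

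First I would partition $\T_4\setminus\{T,T'\}$ by the value of $a$: set $P:=\{a=9\}$, $Q:=\{a=8\}$, $R:=\{a\leq 7\}$, of sizes $p,q,r$ with $p+q+r=t_4-2$. Substituting the trivial upper bounds $a\leq 9,8,7$ on the three parts into the lower bound on $\sum a$ yields the arithmetic inequality
\[
2p+q\;\geq\;t_4+7-\alpha.
\]
Unpacking the definition of $\conn$, the set $S$ of connecting triangles consists of exactly those $T''\in P$ with $b\geq 7$ together with those $T''\in Q$ with $b\geq 8$ (since $(a,b)=(9,\geq 8)$ is absorbed into the second clause via the first); let $u$ and $v$ denote the sizes of these two subsets, so $|S|=u+v$.

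The core step is to apply the symmetric lower bound on $\sum b$ with refined upper bounds per class: $b\leq 9$ on the $u$ connecting members of $P$ but $b\leq 6$ on the remaining $p-u$; $b\leq 9$ on the $v$ connecting members of $Q$ but $b\leq 7$ on the remaining $q-v$; and $b\leq 9$ on all of $R$. Adding these gives
\[
\sum_{T''}b\;\leq\;3u+2v+6p+7q+9r.
\]
Substituting $r=t_4-2-p-q$ and using $2p+q\geq t_4+7-\alpha$ together with the elementary $p+q\geq\tfrac12(2p+q)$, a short calculation delivers $3u+2v\geq\tfrac12(t_4+43-5\alpha)$. Since $\alpha\leq 9$ this yields $3u+2v\geq(t_4-2)/2$, hence
\[
|S|\;=\;u+v\;\geq\;\tfrac{1}{3}(3u+2v)\;\geq\;\tfrac{t_4-2}{6}\;\geq\;\tfrac{t_4-2}{12},
\]
which is what was claimed (in fact stronger by a factor of $2$).

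The delicate case is $\alpha=9$, i.e.\ $V(T)\cup V(T')$ spans a $K_6$: both of our lower bounds tighten to $\sum a,\sum b\geq 8(t_4-2)$, and naive averaging on the symmetric quantity $a+b$ collapses to $|S|\geq 0$. The argument works because $\conn$ is asymmetric---nine edges to the favoured triangle permit only seven to the other---and it is exactly this slack that is captured by the asymmetric weight $3u+2v$ (rather than $u+v$). Verifying that the bound on $3p+2q$ still beats the $11-\alpha$ loss is the only conceptual obstacle in the proof; everything else is bookkeeping.
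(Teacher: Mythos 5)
Your argument is correct, and it takes a genuinely different route from the paper's. The paper proves the lemma by a two-branch pigeonhole: it splits on whether at least $\tfrac{7}{12}(t_4-2)$ of the remaining triangles send eight or more edges to $T$, and in each branch applies the degree condition $e(T'',\T_4\setminus\{T''\})>8(t_4-1)$ a second time (to $T'$, and in the second branch also to extract many ``$a=9$'' triangles from the deficit at $T$) to clear the $\tfrac{1}{12}(t_4-2)$ threshold via clause (i) in one branch and clause (ii) in the other. Your approach instead runs a single global weighted double count: from $\sum a\ge 8t_4-7-\alpha$ and the partition $P,Q,R$ you get $2p+q\ge t_4+7-\alpha$; then you bound $\sum b$ above using the non-connecting ceilings $b\le 6$ on $P\setminus S$ and $b\le 7$ on $Q\setminus S$ (these are exactly the negations of clauses (ii) and (i) for $a=9$ and $a=8$ respectively), obtaining $3u+2v\ge \tfrac12(t_4+43-5\alpha)\ge\tfrac12(t_4-2)$ and hence $|S|=u+v\ge\tfrac13(3u+2v)\ge\tfrac16(t_4-2)$. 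I checked the arithmetic: the chain $3p+2q=(2p+q)+(p+q)\ge\tfrac32(2p+q)$, the substitution $r=t_4-2-p-q$, and the final averaging $u+v\ge\tfrac13(3u+2v)$ are all sound, and the identification of $S$ with $\{T''\in P: b\ge 7\}\cup\{T''\in Q: b\ge 8\}$ matches the asymmetric definition of $\conn$. Your method avoids the case split entirely and in fact proves the stronger constant $\tfrac16$ in place of $\tfrac1{12}$; the paper's version is marginally shorter to state but uses the density property three times and leaves no slack. Your closing remark about the $\alpha=9$ boundary is the right diagnosis: the asymmetric weight $(3,2)$ is precisely what salvages the $K_6$ case, where a symmetric count on $a+b$ degenerates.
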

\begin{proof}
  Suppose first that there are at least $\frac{7}{12}(t_4-2)$ triangles in
  $\T_4\setminus\{T,T'\}$ which send $8$ or more edges to $T$. By
  the definition of $\T_4$ we have $e(T',\T_4\setminus\{T'\})>8(t_4-1)$, 
  and so in particular there are at most
  $\frac{1}{2}(t_4-2)$ triangles of $\T_4\setminus\{T,T'\}$ which send seven or less edges
  to $T'$. Hence at least $\frac{1}{12}(t_4-2)$ triangles of $\T_4$ must
  send at least eight edges to both $T$ and $T'$, as required.

  If on the other hand there are less than $\frac{7}{12}(t_4-2)$ triangles
  in $\T_4\setminus\{T,T'\}$ sending eight or more edges to $T$, then there are more than
  $\frac{5}{12}(t_4-2)$ triangles of $\T_4\setminus\{T,T'\}$ which send at most seven edges to
  $T$. Hence, since $e(T,\T_4\setminus\{T,T'\})\ge 8(t_4-2)$,
  there must also be more than $\frac{5}{12}(t_4-2)$ triangles in $\T_4$ which
  send nine edges to $T$. Again by definition of $\T_4$, of these, at least
  $\frac{1}{12}(t_4-2)$ must also send seven or more edges to $T'$, as
  required.
\end{proof}

Our next Lemma now uses this observation to obtain structural information
about $\T_5\cup\M\cup\I$. Here we need that $t_4$ is
sufficiently large.

\begin{lemma}\label{T4rotate}
  Provided that $e(\T_4)\geq 8\binom{t_4}{2}+10t_4-27$ and $t_4\geq 176$,
  there is no set of vertex-disjoint triangles induced by $V(\T_5\cup\M\cup\I)$ which covers
  three or more vertices of $\M\cup\I$.
\end{lemma}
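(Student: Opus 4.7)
The plan is to argue by contradiction. Suppose a set $\tilde{\T}$ of $r$ vertex-disjoint triangles inside $V(\T_5 \cup \M \cup \I)$ covers $s \ge 3$ vertices of $\M \cup \I$, and choose such a $\tilde{\T}$ minimising $r$. By minimality, each triangle of $\tilde{\T}$ must meet $\M \cup \I$ in exactly one or two vertices: a triangle disjoint from $\M \cup \I$ could be dropped while preserving $s \ge 3$, and a triangle entirely inside $\M \cup \I$ could be added to $\T$ to contradict $(k+1) \times K_3$-freeness directly. Hence $r \le s \le 2r$. Let $\T_5' \subseteq \T_5$ denote the triangles of $\T_5$ met by $\tilde{\T}$ and set $p = |\T_5'|$.

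The fundamental move is to replace $\T_5'$ in $\T$ by $\tilde{\T}$, producing a vertex-disjoint family of $|\T| + r - p$ triangles. If $p < r$ this immediately violates $(k+1) \times K_3$-freeness, so henceforth $p \ge r$. In the regime $p = r$ the triangle count is preserved but the set $F := V(\T_5') \setminus V(\tilde{\T})$ of size $s$ becomes free in the complement of the new triangle set, while $s$ vertices of $V(\M \cup \I)$ are absorbed; I would then contradict the maximality of $|\M|$ from Setup~\ref{setup} by constructing a matching of size larger than $m$ in the remaining vertex set. In the regime $p > r$ the move loses $p - r$ triangles, which I would recover through a chain of rotations inside $\T_4$ to restore (and exceed) the triangle count.

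The central tool in both regimes is the hypothesis $e(\T_4) \ge 8\binom{t_4}{2} + 10t_4 - 27$ combined with Lemma~\ref{ConnWidg}. Since each pair of triangles of $\T_4$ contributes at most $9$ edges, this hypothesis forces at least $10t_4 - 27$ pairs to induce a $K_6$. For $p = r$, noting that $p \le s \le 2r$ is bounded and $t_4 \ge 176$, I can locate a $9$-edge pair $\{T_a, T_b\} \subseteq \T_4$ disjoint from $\T_5'$; the rearrangement flexibility of a $K_6$ combined with a connecting triangle produced by Lemma~\ref{ConnWidg} then absorbs one vertex of $F$ while expelling a vertex of $V(T_a) \cup V(T_b)$ that can be matched into an unused vertex of $V(\M \cup \I) \setminus V(\tilde{\T})$, strictly enlarging the matching. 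For $p > r$, Lemma~\ref{ConnWidg} supplies $(t_4 - 2)/12 \ge 14$ connecting triangles per swap; at each step I use a fresh $T'' \in \T_4$ and an alternative $T' \in \T_4 \setminus \T_5'$ to re-pack $T \cup T''$ as two vertex-disjoint triangles, exchanging $T \in \T_5'$ for $T'$ and reducing the deficit by one. Since $r \le s$, and $p \le 3r - s$, the chain has bounded length, fitting within the constant-triangle rotations foreshadowed by the section introduction.

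The main obstacles I anticipate are keeping all triangles used across the chain vertex-disjoint from each other, from $\tilde{\T}$, and from the retained triangles of $\T \setminus \T_5'$; and handling the up-to-two exceptional triangles of $\T_5 \setminus \T_4$ inherited from $\T_1$ or $\T_2$, for which Lemma~\ref{ConnWidg} is not directly available. These exceptional triangles must be treated separately using the edge-count conditions $e(\T_1, \T_4) > 7t_4 + 18$ or $e(\T_2, \T_4) > 8t_4$ under which they were placed in $\T_5$, together with the $\T_1$- and $\T_2$-specific information from Setup~\ref{setup}. The precise constants $176$ and $10t_4 - 27$ are presumably tuned exactly so that disjointness across every chain step is guaranteed.
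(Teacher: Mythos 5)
Your sketch correctly isolates the main ingredients --- the contradiction scheme, Lemma~\ref{ConnWidg}, the abundance of $K_6$-pairs forced by $e(\T_4)\ge 8\binom{t_4}{2}+10t_4-27$, and the need to handle the one or two exceptional triangles from $\T_1\cup\T_2$ separately --- but the case $p=r$ conceals a genuine gap. After swapping $\T_5'$ for $\tilde\T$ the triangle count is unchanged, and you propose to finish by enlarging the matching: absorb a vertex of $F$ via a $K_6$-pair and a connecting triangle, ``expelling a vertex of $V(T_a)\cup V(T_b)$ that can be matched into an unused vertex of $V(\M\cup\I)$.'' There is no reason such a matched edge exists. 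The expelled vertex lies in a triangle of $\T_4$, and by the very definition of $\T_4$ these triangles are seen by at most one $\M$-edge and at most one $\I$-vertex (else they would lie in $\T_1$ or $\T_2$), so a $\T_4$-vertex may well have no neighbour at all in $\M\cup\I\setminus V(\tilde\T)$. Even absent that obstruction, swapping $\T_5'$ for $\tilde\T$ can destroy up to two $\M$-edges while the $s$ freed vertices replace them with at most $\lfloor s/2\rfloor$ new ones, so there is no strict gain in general. The paper never appeals to matching maximality anywhere in this lemma.

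Instead, the paper's proof treats $p=r$ and $p>r$ uniformly, via a different invariant: it tracks the set $Z'$ of ``marked'' vertices (the $\T_5$-vertices of $\cS$, which may migrate during the process) and the set $\cZ_5'$ of triangles of $\T_5$ containing a marked vertex, and it performs rotations of Types~\ref{type:T4rotate:1}--\ref{type:T4rotate:3} that strictly decrease $|\cZ_5'|$ at each step by \emph{merging} marked vertices from two (or three) different triangles into fewer triangles, using connecting triangles from Lemma~\ref{ConnWidg} and the two pre-located $K_6$-copies. This is carried out until $|\cZ_5'|<|\cS|$, at which point the replacement yields strictly more than $k$ disjoint triangles --- a contradiction to the choice of $\T$, never to the choice of $\M$. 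Your chain mechanism for $p>r$ (``re-pack $T\cup T''$ as two disjoint triangles, exchanging $T$ for $T'$'') does not achieve this merging: replacing a marked triangle $T\in\T_5'$ by an unmarked $T'\in\T_4\setminus\T_5'$ neither reduces the number of marked vertices nor brings two marked vertices into the same triangle, so it cannot lower $|\cZ_5'|$. Finally, the preparation steps (finding two disjoint $K_6$'s inside $\T_4$ avoiding $Z$, and migrating any marked vertex out of $\T_1\cup\T_2$ into $\T_4$ using the edge-count thresholds defining $\T_5$) have to be carried out explicitly before any rotation type can apply; they are only gestured at in your sketch.
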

\begin{proof} 
Suppose the statement is false, that is, there exists a set of
vertex-disjoint triangles in $\T_5\cup\M\cup\I$ which covers three or more
vertices of $\M\cup\I$.

Then we have one of the following three Situations.
\begin{enumerate}[label=\rom]
 \item\label{T4rotate:i} 
   There are three triangles which each consist of a vertex of $\M\cup\I$ and
    an edge in $\T_5$.
  \item\label{T4rotate:ii}
    There is one such triangle and one triangle consisting
    of an edge in $\M\cup\I$ and a vertex of $\T_5$.
  \item\label{T4rotate:iii}
    There are two triangles of the latter type.
\end{enumerate}

We denote the set of these two or three vertex disjoint triangles by $\cS$
and call them \emph{extra triangles}. We denote the set of vertices in
these triangles that are in~$\T_5$ by~$Z$. Observe that $|Z|\le 6$ and
therefore~$Z$ meets at most six triangles in~$\T_5$ which we denote by
$\cZ_5\subset \T_5$.

The idea now is as follows. If we are in Case~\ref{T4rotate:i} and $\cZ_5$
contained only two triangles we immediately arrived at a contradiction since we
could replace~$\cZ_5$ by~$\cS$ and obtain a triangle factor with one triangle
more than~$\T$. Similarly, if we are in Case~\ref{T4rotate:ii}
or~\ref{T4rotate:iii} we cannot have $|\cZ_5|=1$. These two observations
together mean that we cannot have $|\cZ_5|<|\cS|$. 
% Of course, there is no reason
% why we should immediately obtain $|\cZ_5|<|\cS|$, but w
We will show in the
following that, by way of a sequence of rotations, we can turn any
configuration of $\cZ_5$ into a configuration resembling such a situation and hence
arrive at a contradiction.

More precisely, we shall proceed as follows. Let $V_5$ be the set of
vertices covered by $\cZ_5\cup\cS$. Throughout our process we shall keep
track of a set of \emph{new triangles}~$\cN'$ and a set of \emph{deleted
  triangles}~$\cD'$ such that 
\begin{equation}
\label{eq:T4rotate:cNcD}
  \cN'\cap\T_5=\emptyset \qquad\text{and}\qquad
  \cD'\subset\T_5 \qquad\text{and}\qquad
  |\cD'|=|\cN'|\le 29
  \,.
\end{equation}
In the beginning we set $\cD'=\cN'=\emptyset$. In each
step, we will consider the set of triangles
\begin{equation*} 
  \T'_5:=(\T_5\setminus\cD')\cup\cN'\cup\cS\,. 
\end{equation*}
%Throughout the process we will have that~$\T'_5$ is not a triangle factor
%but that
It will not be true in general throughout the process that~$\T'_5$ is a triangle factor (observe that this for example fails initially). On the other hand we will always have that
\begin{equation}
\label{eq:T4rotate:cover}
  \text{each vertex of~$V_5$ is covered either by one or by two triangles
    of $\T'_5$}\,.
\end{equation}
 We will denote the set of vertices covered by two triangles
by~$Z'$ and call them the \emph{marked vertices}. We let~$\cZ'_5$ be the
set of those triangles of~$\T_5$ which contain a marked vertex and we call
these triangles the \emph{marked triangles}. Note that in the beginning we
have $Z'=Z$ and $\cZ'_5=\cZ_5$. Further, in each step we will have that 
\begin{equation}
\label{eq:T4rotate:mark}
  \text{every marked vertex is contained in a triangle of $\T_5\setminus
  \cD'$}\,,
\end{equation}
which implies that in each step $\T'_5\setminus\cZ'_5$ is a triangle factor
of size $|\T_5|-|\cZ'_5|+|\cS|$ by~\eqref{eq:T4rotate:cNcD}.

In each step we will now perform a rotation by adding
some vertex disjoint triangles in
$G[V_5]$ to the set of new triangles~$\cN'$, and deleting as many triangles
from~$\T'_5\cap \T_5$, i.e., we will add these triangles to the set of deleted
triangles~$\cD'$. We will have three preparation steps (Preparation 1--3) and three main rotation types (Type 1--3).
No step will change the size of~$Z'$ and 
\begin{equation}
\label{eq:T4rotate:dec}
  \text{each rotation of Type 1, 2 or 3 will decrease the size of~$\cZ'_5$}\,.
\end{equation}
We will stop when $|\cZ'_5|<|\cS|$, since
then $\T'_5 \setminus\cZ'_5$ is a triangle factor with more triangles than
$\T_5$, a contradiction.

It remains to construct~$\cZ'_5$ with these properties. We will first carry out three preparatory steps: roughly, these consist of locating two disjoint copies of $K_6$ in $\T_4$ (Preparation~\ref{prep:K6}) and showing that we can `move' $Z'$ to $\T_4$ (which is useful because Lemma~\ref{ConnWidg} then applies), in Preparations~\ref{prep:t1} and~\ref{preround3}. After this we have either two, three, or six marked vertices in $\T_4$. Our next aim is to `move around' these vertices within $\T_4$ such that they are contained in one, one or two (respectively) triangles of $\T_4$. Achieving this immediately gives us $|\cZ'_5|<|\cS|$, which is what we want. To do this we make use of our main rotation Types~\ref{type:T4rotate:1},~\ref{type:T4rotate:2} and~\ref{type:T4rotate:3}. We will now give details of the Preparation steps and the main rotation Types.

\begin{prepround}\label{prep:K6}
  There are two disjoint pairs $(T_1,T'_1)$ and $(T_2,T'_2)$ of
  triangles in~$\T_4$ which do not meet~$Z$ and are such that $V(T_1)\cup
  V(T'_1)$ and $V(T_2)\cup V(T'_2)$ each induce a copy of~$K_6$ in~$G$.
  We set $\cK_6:=\{T_1,T'_1,T_2,T'_2\}$ and call $(T_1,T'_1)$ and
  $(T_2,T'_2)$ the $K_6$-copies of~$\cK_6$.
\end{prepround}

To see this, let $H=(\T_4,E_H)$ be the auxiliary graph with
edges exactly between those triangles $T,T'\in\T_4$ which are
connected by nine edges.  Since $e(\T_4)\ge 8\binom{t_4}{2}+10t_4-27$ by
assumption and
\begin{equation*}
  e(\T_4)\le 9
  e(H)+8\big(\tbinom{t_4}{2}-e(H)\big)+3t_4=e(H)+8\tbinom{t_4}{2}+3t_4,
\end{equation*} 
we conclude that $e(H)\ge7t_4-27$. Since $\max\big(7(t_4-7)+\binom{7}{2},\binom{15}{2}\big)=7t_4-28<e(H)$, we can apply
Theorem~\ref{thm:ErdGal} to~$H$ and infer that there are at least eight
independent edges in~$H$, and hence at least two independent edges in $H$ which
do not meet $\cZ_5$. These two edges give us the pairs $(T_1,T'_1)$ and $(T_2,T'_2)$.

\begin{prepround}\label{prep:t1} Suppose that $\{uvw\}=\T_1\subset\T_5$. We distinguish four cases.

{\sl Case~1:} In the case when $Z\cap \T_1=\emptyset$ we do not do anything.

{\sl Case~2:} If $Z\cap \T_1=\{u\}$, then we consider the edges
between $uvw$ and $\T_4$. Because there are in total at least $7t_4+19$ such
edges (recall that this was the condition for inclusion of $\T_1$ in $\T_5$), in particular there must be at least ten triangles of $\T_4$ to which
$uvw$ sends more than seven edges. Now at most $9$ of these triangles are in
$\cZ_5\cup\cK_6$, as no triangle of $\T_4$ covers $u\in Z$. Therefore there is a triangle
$xyz\in\T_4\setminus(\cZ_5\cup\cK_6)$ to which $uvw$ sends at least eight edges.
Thus $vw$ has a common neighbour, say $x$, in $xyz$. We add $xvw$ to the set of new
triangles $\cN'$, and $uvw$ to the set of deleted triangles $\cD'$. The upshot
is that $u$ is no longer marked, but $x$, which lies in a triangle of
$\T_4\setminus(\cZ_5\cup\cK_6)$, is.

{\sl Case~3:} If $Z\cap \T_1=\{u,v\}$ then we work similarly: again, there is a triangle
$xyz\in\T_4\setminus(\cZ_5\cup\cK_6)$ to which $uvw$ sends at least eight edges,
and we may assume $w$ is adjacent to both $x$ and $y$. We add $xyw$ to $\cN'$,
and $uvw$ to $\cD'$. The result is that $u$ and $v$ are no longer marked, but
$x$ and $y$ are.

{\sl Case~4:} If $Z\cap \T_1=\T_1$, we may again simply ignore $\T_1$ (keeping the vertices of $\T_1$ still marked). The only possibility is that we are in
situation~\ref{T4rotate:i} or in situation~\ref{T4rotate:ii}.
We rule out situation~\ref{T4rotate:ii} as follows. If $auv$
and $bcw$ are the two triangles from
situation~\ref{T4rotate:ii} then replacing $uvw\in \T_1$ by
$auv$ and $bcw$ is an improving rotation, a contradiction.
\end{prepround}

\begin{prepround}\label{preround3} Suppose that
$\{u'v'w'\}=\T_2\subset\T_5$. We behave exactly as above, which we may do because $e(\T_2,\T_5)>8t_4\ge 7t_4+19$. The first inequality is by definition of $\T_5$, and the second by the assumption $t_4\ge 176$.
\end{prepround}

Before describing the main rotation Types, let us briefly recap the current situation. We have a set $Z'$ of marked vertices, which contains either six, three or two vertices (in Situation~\ref{T4rotate:i},~\ref{T4rotate:ii} or~\ref{T4rotate:iii} respectively). If $|\T_1|=|\T_2|=1$ and there are six marked vertices are in $\T_1\cup\T_2$ then removing the two triangles $\T_1\cup\T_2$ from $\T$ and adding the three triangles $\cS$ is an improving rotation, which is a contradiction. It follows that either all the marked vertices are in $\T_4$, or we have six marked vertices, of which either three are in the unique triangle of $\T_1$ or three are in the unique triangle of $\T_2$, and the remaining three are in $\T_4$. We have a set of at most two deleted triangles $\cD'$ (at most one from each of Preparation~\ref{prep:t1} and~\ref{preround3}) none of which are in $\T_4$. Finally, we have a set $\cK_6$ consisting of four triangles of $\T_4$ which span two disjoint copies of $K_6$, none of whose vertices are marked.

We now describe the main rotation Types.

\begin{type}
\label{type:T4rotate:1}
  Suppose that $\big|\T_4\cap(\cD'\cup\cZ'_5)\big|\le 11$, and that there are two triangles $uvw$ and $u'v'w'$ of $\cZ'_5$, such that $Z'\cap\{u,v,w,u',v',w'\}=\{u,u'\}$. We can add two triangles to $\cD'$, neither in $\cK_6$, and two triangles to $\cN'$, and obtain $\big|Z'\cap\{u,v,w\}\big|=2$.
\end{type}

This type of rotation can be constructed for the following
reason. By Lemma~\ref{ConnWidg} there are $\tfrac{1}{12}(t_4-2)>14$ triangles
$xyz$ in $\T_4$ such that $uvw\conn xyz\conn u'v'w'$. Of these, at most $4$
are in $\cK_6$, and, because $xyz$ is neither $uvw$ nor $u'v'w'$, at most
$9$ are in $\T_4\cap(\cD'\cup\cZ'_5\big)$. It follows that we may choose
$xyz$ in $\T_4\setminus(\cD'\cup\cK_6\cup\cZ'_5)$ such that $uvw\conn
xyz\conn u'v'w'$.  Because of this connection, at least one vertex of
$xyz$, say~$x$, is adjacent to both $v'$ and $w'$. In addition, because the
connection favours $uvw$, at least two vertices of $uvw$ are adjacent
to~$y$ and~$z$.  In particular, one vertex of $uvw$ different from~$u$,
say~$v$, forms a triangle with~$y$ and~$z$. Now we can rotate by adding the
triangles $u'v'w'$ and $xyz$ to the set~$\cD'$ of deleted triangles and the
triangles $v'w'x$ and $vyz$ to the set~$\cN'$ of new triangles.  Observe
that this rotation satisfies~\eqref{eq:T4rotate:cNcD} and
\eqref{eq:T4rotate:cover}. Further, it removes~$u'$ from~$Z'$ and $u'v'w'$
from~$\cZ'_5$ and adds~$v$ to~$Z'$ and no new triangle
to~$\cZ'_5$. Hence~\eqref{eq:T4rotate:mark} and~\eqref{eq:T4rotate:dec} are
also satisfied.

\begin{type}
\label{type:T4rotate:2}
  Suppose that $\big|\T_4\cap(\cD'\cup\cK_6\cup\cZ'_5)\big|\le 15$, that at least one of the copies of $K_6$ in $\cK_6$ does not meet $\cD'$, and that there are two triangles $uvw$ and $u'v'w'$ of $\cZ'_5$, such that $Z'\cap\{u,v,w,u',v',w'\}=\{u,v,u'\}$. We can add five triangles to $\cD'$, exactly two of which are in $\cK_6$, and five triangles to $\cN'$, and obtain $Z'\cap\{u,v,w\}=\{u,v,w\}$.
\end{type}

Let the copy of $K_6$ in $\cK_6$ not meeting $\cD'$ be on the triangles $abc,def$ of $\T_4$. By Lemma~\ref{ConnWidg} there are at least $\tfrac{1}{12}(t_4-2)$ triangles
$xyz$ in $\T_4$ with
$uvw \conn xyz \conn abc$. Since $xyz$ is neither $uvw$ nor $abc$, by assumption there are at least two choices of $xyz\not\in \cD'\cup \cK_6\cup \cZ'_5$. We fix one. Similarly, by Lemma~\ref{ConnWidg} there is a choice of triangle $x'y'z'$ in
$\T_4\setminus(\cD'\cup\cK_6\cup\cZ'_5\cup\{uvw\})$ with 
$u'v'w'\conn x'y'z'\conn def$ (see also Figure~\ref{fig:secconst}).  Because of
the second connection, at least
one vertex of $x'y'z'$, say $x'$, is a common neighbour of $v'w'$, and at
least one vertex of $def$, say~$e$, is a common neighbour of $y'z'$. We
conclude that $x'v'w'$ and $ey'z'$ are triangles.

\begin{figure}[utilise]
\centering
\psfrag{u}{$u$}
\psfrag{v}{$v$}
\psfrag{w}{$w$}
\psfrag{u'}{$u'$}
\psfrag{v'}{$v'$}
\psfrag{w'}{$w'$}
\psfrag{x}{$x$}
\psfrag{y}{$y$}
\psfrag{z}{$z$}
\psfrag{x'}{$x'$}
\psfrag{y'}{$y'$}
\psfrag{z'}{$z'$}
\psfrag{a}{$a$}
\psfrag{b}{$b$}
\psfrag{c}{$c$}
\psfrag{d}{$d$}
\psfrag{e}{$e$}
\psfrag{f}{$f$}
\includegraphics[height=60mm]{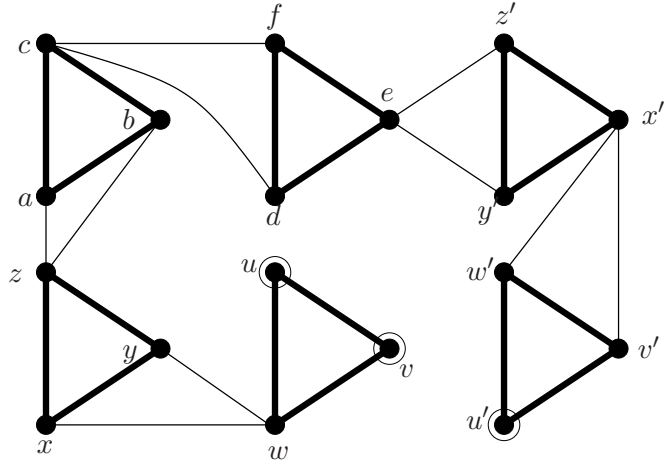}
\caption{The second rotation type.}
\label{fig:secconst}
\end{figure}
 
Now we distinguish two possibilities concerning the connection between
$uvw$ and $abc$. First, there are at least eight edges from $xyz$ to both
$uvw$ and $abc$. In this case, we are guaranteed that at least two
vertices, say $x$ and $y$, of $xyz$ are adjacent to $w$, and at least two
vertices, say $a$ and $b$, of $abc$ are adjacent to $z$. Hence $xyw$, $abz$
and $cdf$ are triangles in~$G$. Second, since the connection favours
$uvw$, from $xyz$ there are nine edges to $uvw$ and seven to $abc$. Some
vertex of $xyz$, say $z$, is adjacent to both, $a$ and $b$. Therefore,
again, $abz$, $xyw$, and $cdf$ are triangles.

Accordingly we can rotate by adding $x'v'w'$, $ey'z'$, $cdf$, $abz$, and
$xyw$ to~$\cN'$, and $u'v'w'$, $x'y'z'$, $def$, $abc$, and $xyz$
to~$\cD'$. This deletes~$u'$ from~$Z'$ and hence $u'v'w'$ from~$\cZ'_5$, it
adds~$w$ to~$Z'$ and no triangle to~$\cZ'_5$. Hence, as can easily be
checked, this rotation satisfies~\eqref{eq:T4rotate:cNcD},
\eqref{eq:T4rotate:cover}, \eqref{eq:T4rotate:mark},
and~\eqref{eq:T4rotate:dec}.

\begin{type}
\label{type:T4rotate:3}
  Suppose that $\big|\T_4\cap(\cD'\cup\cK_6\cup\cZ'_5)\big|\le 12$ and $\cK_6\cap\cD'=\emptyset$, and that there are three triangles $uvw$, $u'v'w'$, and
  $u''v''w''$ of $\cZ_4'\cap \T_4$ such that $Z'=\{u,v,u',v',u'',v''\}$. Then we can add at most ten triangles to $\cD'$ and ten triangles to $\cN'$, and obtain $Z'=\{u,v,w,u',v',w'\}$.
\end{type}

Let the two copies of $K_6$ in $\cK_6$ be $(abc,def)$ and $(a'b'c',d'e'f')$.
We apply Lemma~\ref{ConnWidg} five times to obtain the following 
connections which avoid each other and whose connecting triangles are from
$\T_4\setminus(\cD'\cup\cK_6\cup\cZ'_5)$:
$uvw \conn xyz \conn abc$,
$u'v'w'\conn x'y'z'\conn def$, 
$u''v''w'' \conn x''y''z''\conn a'b'c'$, 
$abc\conn a''b''c''\conn def$, and 
$d'e'f'\conn d''e''f''\conn a''b''c''$. Observe that this is possible since at each application Lemma~\ref{ConnWidg} guarantees at least $15$ connecting triangles in $\T_4$, while at each application there are by assumption at most $12-2=10$ triangles of $\cD'\cup\cK_6\cup\cZ'_5$ to avoid (since two triangles from this set are being connected and are thus automatically avoided), together with the at most four previously determined connecting triangles which must also be avoided.

\begin{figure}[htbp]
\centering
\psfrag{u}{$u$}
\psfrag{v}{$v$}
\psfrag{w}{$w$}
\psfrag{u'}{$u'$}
\psfrag{v'}{$v'$}
\psfrag{w'}{$w'$}
\psfrag{u''}{$u''$}
\psfrag{v''}{$v''$}
\psfrag{w''}{$w''$}
\psfrag{x}{$x$}
\psfrag{y}{$y$}
\psfrag{z}{$z$}
\psfrag{x'}{$x'$}
\psfrag{y'}{$y'$}
\psfrag{z'}{$z'$}
\psfrag{x''}{$x''$}
\psfrag{y''}{$y''$}
\psfrag{z''}{$z''$}
\psfrag{a}{$a$}
\psfrag{b}{$b$}
\psfrag{c}{$c$}
\psfrag{a'}{$a'$}
\psfrag{b'}{$b'$}
\psfrag{c'}{$c'$}
\psfrag{a''}{$a''$}
\psfrag{b''}{$b''$}
\psfrag{c''}{$c''$}
\psfrag{d}{$d$}
\psfrag{e}{$e$}
\psfrag{f}{$f$}
\psfrag{d'}{$d'$}
\psfrag{e'}{$e'$}
\psfrag{f'}{$f'$}
\psfrag{d''}{$d''$}
\psfrag{e''}{$e''$}
\psfrag{f''}{$f''$}
\includegraphics[width=360pt]{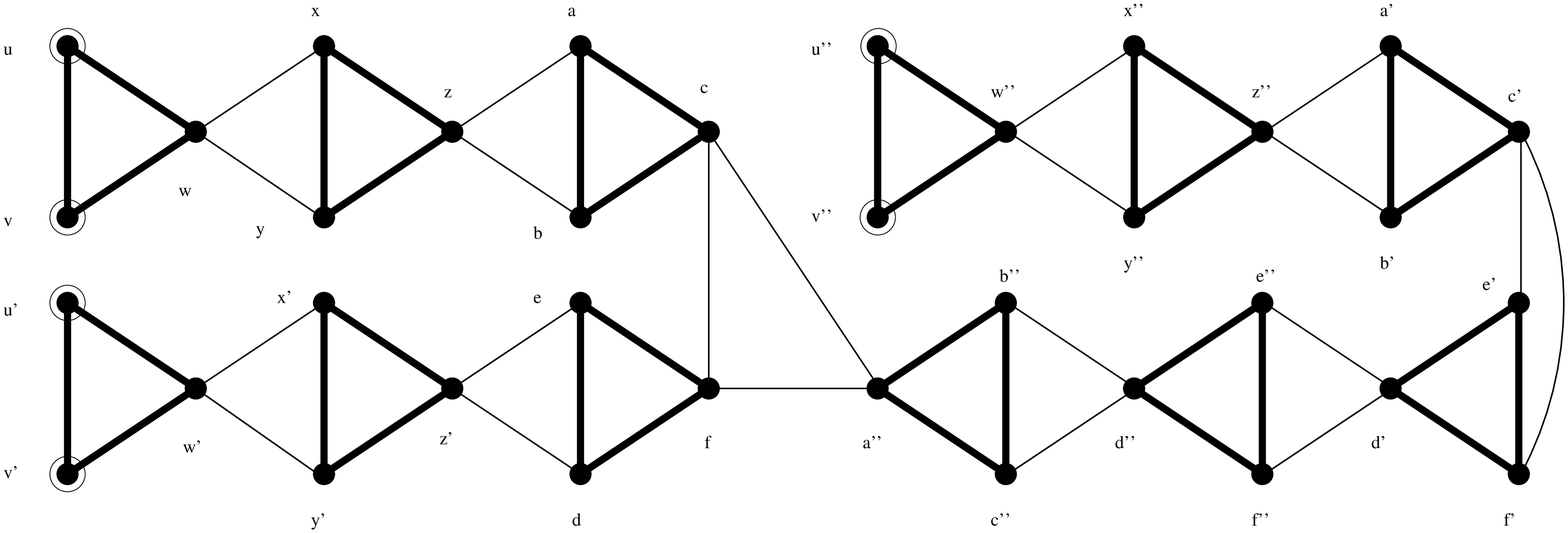}
\caption{The third rotation type.}
\label{fig:thirdconst}
\end{figure}

Arguing similarly as before, these connections guarantee, without
loss of generality, the triangles $wxy$, $w'x'y'$,
$w''x''y''$, $zab$, $z'de$, and $z''a'b'$. Since $cf$ belongs to a~$K_6$,
it is an edge, and because of the connection $abc\conn
a''b''c''\conn def$, there is a vertex, say $a''$,
of $a''b''c''$ which is adjacent to both $c$ and $f$, and
so $cfa''$ is a triangle of~$G$. 
Finally, using the connection $d'e'f'\conn d''e''f''\conn a''b''c''$,
we can find a common neighbour, say $d''$, of $b''c''$ in $d''e''f''$, and
and a common neighbour, say $d'$, of $e''f''$ in $d'e'f'$. Hence, 
$b''c''d''$, $d'e''f''$, and $c'e'f'$ are triangles of~$G$. 
See Figure~\ref{fig:thirdconst}.

We conclude that we can rotate by  adding the ten triangles $wxy$, $w'x'y'$, $w''x''y''$,
$zab$, $z'de$, $z''a'b'$, $cfa''$, $b''c''d''$, $d'e''f''$,  $c'e'f'$
to~$\cN'$ and adding the ten triangles $u''v''w''$,
$xyz$, $x'y'z'$, $x''y''z''$, $abc$, $def$, $a'b'c'$, $d'e'f'$,
$a''b''c''$, $d''e''f''$ to~$\cD'$.
This removes~$u''$ and~$v''$ from~$Z'$ and hence
$u''v''w''$ from~$\cZ'_5$, and adds~$w$ and~$w'$ to~$Z'$ and no new triangle
to~$\cZ'_5$.
Again, it is easy to check that this rotation satisfies~\eqref{eq:T4rotate:cNcD},
\eqref{eq:T4rotate:cover}, \eqref{eq:T4rotate:mark},
and~\eqref{eq:T4rotate:dec}.

We now explain how we apply these rotation Types. If we started in Situation~\ref{T4rotate:iii}, then $Z'$ consists of two vertices in $\T_4$. These two vertices are in distinct triangles of $\T_4$ (since $|\cZ'_5|\ge|\cS|=2$). We apply rotation Type~\ref{type:T4rotate:1} to the two triangles of $\cZ'_5$, which we can do since $\big|\T_4\cap(\cD'\cup\cZ'_5)\big|=2$. This adds two triangles to each of $\cD'$ and $\cN'$, and reduces $\cZ'_5$ to one triangle. So we have $|\cZ'_5|<|\cS|$ and we are done.

If we started in Situation~\ref{T4rotate:ii}, then $Z'$ consists of three vertices in $\T_4$. These may either lie in two or three triangles of $\T_4$ (since $|\cZ'_5|\ge|\cS|=2$). In the latter case we apply rotation Type~\ref{type:T4rotate:1} to two of the triangles of $\cZ'_5$ (which we may do for the same reason as above), which reduces $\cZ'_5$ to two triangles. Now since $\cZ'_5$ has two triangles, so one contains two vertices of $Z'$ and the other contains one. We apply rotation Type~\ref{type:T4rotate:2} to the two triangles of $\cZ'_5$, which we may do since $\big|\T_4\cap(\cD'\cup\cK_6\cup\cZ'_5)\big|\le 2+4+2=8$, and obtain $|\cZ'_5|=1<|\cS|$: we are done.

Finally, suppose we started in Situation~\ref{T4rotate:i}. Now $Z'$ contains six vertices and $|\cS|=3$. These cannot all lie in two triangles of $\T_5$, since otherwise deleting these two triangles and adding $\cS$ to $\T$ is an improving rotation. If three vertices of $Z'$ lie in one triangle of $\T_5$, and the remaining three lie in either two or three triangles (which must be in $\T_4$) then we apply the identical rotation strategy as in Situation~\ref{T4rotate:ii}. We may have $\cZ'_5$ larger by one than there, but nevertheless the rotations exist. The remaining possibility is that all six vertices of $Z'$ lie in $\T_4$, and no three are contained in any one triangle of $\T_4$. We separate several possibilities.

First, if the six vertices lie in three triangles of $\T_4$, then we apply rotation Type~\ref{type:T4rotate:3}, which we may do since $\big|\T_4\cap(\cD'\cup\cK_6\cup\cZ'_5)\big|=0+4+3=7$, and obtain $|\cZ'_5|=2<|\cS|$: we are done.

If the six vertices lie in five or six triangles of $\T_4$, then we apply rotation Type~\ref{type:T4rotate:1} either once or twice. In the first application we have $\big|\T_4\cap(\cD'\cup\cZ'_5)\big|\le 0+6=6$, while in the second application (if we apply it twice) $\big|\T_4\cap(\cD'\cup\cZ'_5)\big|\le 2+5=7$, so we are permitted to do this. We add either two or four triangles to each of $\cD'$ and $\cN'$, and reduce $\cZ'_5$ to four triangles, which is our final case.

The final case we have to handle is that $\cZ'_5$ contains four triangles, of which two contain two vertices of $Z'$ each and two contain one each. We apply rotation Type~\ref{type:T4rotate:2} twice. In the first application we have $\big|\T_4\cap(\cD'\cup\cK_6\cup\cZ'_5)\big|\le 4+4+4=12$, while in the second we have $\big|\T_4\cap(\cD'\cup\cK_6\cup\cZ'_5)\big|\le 9+2+3=14$, since the first application adds five triangles to $\cD'$, two of which are in $\cK_6$, and therefore we can indeed construct these rotations. After the second application of rotation Type~\ref{type:T4rotate:2} we have $|\cZ'_5|=2<|\cS|$ and we are done.
\end{proof}

We are able to convert the structural information provided by Lemma~\ref{T4rotate} into an upper bound on $e(\T_5\cup\M\cup\I)$. We need to define the following function.
\begin{equation}\label{eq:defpprime}
  p(h,a):=
  \begin{cases}
    a(h-a)+\binom{h-2a}{2}+6h & 2a\le h< 9a\;,\\
    (a-2)(h-a+2)+\binom{h-2a+4}{2} & 9a\le h\;.
  \end{cases}
\end{equation}

The connection between this function and $e(\T_5\cup\M\cup\I)$ is provided by the following lemma, whose proof we defer to Section~\ref{sec:fewtriangles}.

\begin{lemma}\label{lem:T4upperMG}
There exists $\kappa_0$ such that the following holds. Let $H$ be a graph of order $h\ge\kappa_0$. Suppose that $A$ is a subset of $V(H)$, with $3\le|A|\le h/2$ and the property that there is no set
of vertex-disjoint triangles in $H$ which covers three or more
vertices of $\A$.
Then $e(H)\le p(h, |\A|)$.
\end{lemma}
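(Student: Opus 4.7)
The plan is to reduce the lemma to an auxiliary extremal statement about graphs with no triangle through any $A$-vertex, and then combine. First, I would take a vertex-disjoint packing $\mathcal{P}$ of triangles in $H$, each containing at least one $A$-vertex, and maximising $|V(\mathcal{P}) \cap A|$. Set $W := V(\mathcal{P})$; by the hypothesis $|V(\mathcal{P}) \cap A| \le 2$, so $|\mathcal{P}| \le 2$ and $|W| \le 6$. Maximality of $\mathcal{P}$ implies that $H - W$ contains no triangle through any vertex of $A \setminus W$, as any such triangle would extend $\mathcal{P}$.

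Second, I would establish the key auxiliary extremal bound: \emph{if $H'$ is an $h'$-vertex graph and $A' \subseteq V(H')$ with $|A'| = a' \le h'/2$ is such that no triangle of $H'$ meets $A'$, then $e(H') \le a'(h'-a') + \binom{h'-2a'}{2}$.} After passing to an edge-maximal $H'$, the neighbourhood $N_{H'}(a)$ is independent for every $a \in A'$, so $|N_{H'}(a)| \le \alpha(H'[V(H')\setminus A'])$. A Zykov-type optimisation of $|A'| \cdot \alpha(B') + e(B')$ over the structures of $B' := V(H') \setminus A'$, building on the tools developed in~\cite{AllBoettHlaPig_Strengthening}, yields the bound; the extremum is achieved when $A'$ is independent and completely joined to an independent set $B_I$ of size $a'$, which in turn is completely joined to a clique $B_c$ of size $h' - 2a'$, where $B_I \dcup B_c = B'$.

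Applying the auxiliary bound to $H - W$ (with $a' = a - |W \cap A|$ and $h' = h - |W|$) and combining with the trivial estimates $e(H[W]) \le \binom{|W|}{2} \le 15$ and $e(W, V(H) \setminus W) \le |W|(h-|W|) \le 6h$ yields an upper bound on $e(H)$. To finish, I would compare this bound with $p(h, a)$ in each of the cases $|W \cap A| \in \{0, 1, 2\}$. In the regime $2a \le h < 9a$, the $+6h$ slack in $p(h, a)$ comfortably absorbs the $\le 6h + 15$ contribution from $W$-edges regardless of $|W \cap A|$. In the regime $h \ge 9a$ with $|W \cap A| \le 1$, the shift $a \to a - |W \cap A|$ in the auxiliary bound, combined with the expansion of the binomial coefficient in $p(h, a)$, absorbs the surplus.

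The main obstacle is the regime $h \ge 9a$ together with $|W \cap A| = 2$, where the naive estimate exceeds $p(h,a) = (a-2)(h-a+2) + \binom{h-2a+4}{2}$ by $\Theta(a)$ edges. To close this gap I would exploit that $H - W$ is forced (near extremality) to sit in the extremal configuration of the auxiliary bound, so $A \setminus W$ is placed in the independent $B_I$-side of that configuration; any edge from one of the two $A$-cover vertices $\{x, y\} := W \cap A$ into the $B_c$-side would, by the complete join from $A \setminus W$ to $B_I$, produce three vertex-disjoint triangles collectively covering three $A$-vertices (one being $x$ or $y$, two from $A \setminus W$), contradicting the hypothesis. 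A careful accounting using this structural rigidity forces $\deg(x) + \deg(y) \le 2(h - a + 2)$, which, together with analogous tightening for the remaining $W$-vertices in the $|W| = 6$ sub-case, is precisely the saving needed to match $p(h, a)$.
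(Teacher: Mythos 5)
Your reduction to a set $W$ of at most six vertices (a maximal packing of triangles hitting $A$) is correct and matches the paper's opening move, and the easy regime $2a\le h<9a$ is fine: the $+6h$ slack in $p$ absorbs the $W$-contribution. The serious issue is that your treatment of the hard regime $h\ge 9a$ (with $|W\cap A|=2$) does not close the gap you correctly diagnose as $\Theta(a)$.

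Two things go wrong with the proposed fix. First, the degree bound you aim for, $\deg(x)+\deg(y)\le 2(h-a+2)$, is far too weak. Plugging it into $e(H)\le e(H-W)+\deg(x)+\deg(y)+4h+O(1)$ together with $e(H-W)\le (a-2)(h-a-4)+\binom{h-2a-2}{2}$ still leaves you roughly $4a$ above $p(h,a)=(a-2)(h-a+2)+\binom{h-2a+4}{2}$; a short expansion shows the threshold you'd actually need for this kind of direct bookkeeping is $\deg(x)+\deg(y)\le 2h-6a-9$, an entirely different order of strength. Second, the contradiction you sketch does not produce three vertex-disjoint triangles covering three $A$-vertices: in the extremal configuration for the auxiliary bound the vertices of $A\setminus W$ are in \emph{no} triangle at all (their neighbourhood is contained in an independent set), so an edge from $x$ or $y$ into the clique side $B_c$ merely puts $x$ or $y$ into a triangle; there is no way to cover a third $A$-vertex. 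You also silently assume $H-W$ is near-extremal without introducing the necessary dichotomy: when $e(H-W)$ is well below the extremal value you are already done trivially, and only in the near-extremal case is a stability theorem available to use.

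What the paper actually does in this regime is structurally different and uses three ingredients you omit. (i) A stability theorem (for the ``no triangle through $A$'' extremal problem) is applied to the graph $H'$ obtained by deleting all edges at $U$; the resulting structure guarantees that \emph{every} $\tfrac{2h}{9}$-set of vertices spans $\Omega(h^2)$ edges and hence contains a matching of at least $7$ edges. (ii) A dichotomy on $\deg_H(v_1)+\deg_H(v_2)$ with threshold $2h-6a-9$: if the sum is at most this, a direct count via the auxiliary bound on $H'-U$ already gives $e(H)\le p(h,a)$. (iii) If not, both $v_i$ have degree at least $\tfrac{2h}{9}$, so each $N_H(v_i)$ contains a $7$-edge matching $M_i$; then the existence of \emph{any} triangle $xyz$ of $H$ with $x\in A\setminus\{v_1,v_2\}$ lets you greedily pick disjoint edges from $M_1$ and $M_2$ avoiding $\{x,y,z\}$ to form triangles through $v_1$ and $v_2$, contradicting the hypothesis. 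Hence no triangle of $H$ meets $A':=A\setminus\{v_1,v_2\}$, and one finishes by a symmetrisation (vertex-duplication) argument applied to \emph{all of $H$} with the set $A'$ of size $a-2$, yielding $e(H)\le\binom{h-a+2-s}{2}+s(h-s)$ whose integer maximum is exactly $p(h,a)$. Your proposal has none of the matching argument, uses the wrong stability target, and replaces the symmetrisation with an unproved degree estimate that is quantitatively insufficient.

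A minor additional point: applying the auxiliary extremal bound to $H-W$ with $a'=a-|W\cap A|$ and $h'=h-|W|$ can violate the hypothesis $a'\le h'/2$ when $a$ is close to $h/2$ and $|W\cap A|<|W|/2$; the paper avoids this by applying the bound to $H'$ on all $h$ vertices (with $A$ of size $a$), not to the induced subgraph.
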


Putting this lemma together with Lemma~\ref{T4rotate} allows us to strengthen the bound~\eqref{T4}. This is the missing ingredient for the proof of Theorem~\ref{thm:main}.

\begin{lemma}\label{clm:T4upper}
  There exists $\kappa_0$ such that the following holds. Provided that $e(\T_4)\geq 8\binom{t_4}{2}+10t_4-27$ and $t_4\geq \max\big(176,\kappa_0,\tfrac{2m+i}{3}\big)$ we
  have
  \[e(\T_5\cup\M\cup\I)\leq p(3t_5+2m+i,2m+i)\,.\]
\end{lemma}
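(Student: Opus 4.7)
The plan is to apply Lemma~\ref{lem:T4upperMG} to the induced subgraph $H := G[V(\T_5 \cup \M \cup \I)]$ with distinguished set $\A := V(\M) \cup \I$. With these choices, $h := |V(H)| = 3t_5 + 2m + i$ and $|\A| = 2m + i$, matching exactly the quantities in the target bound $p(3t_5+2m+i,2m+i)$.

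First I would verify the structural hypothesis of Lemma~\ref{lem:T4upperMG}: no set of vertex-disjoint triangles in $H$ covers three or more vertices of $\A$. This is precisely the conclusion of Lemma~\ref{T4rotate}, whose hypotheses ($e(\T_4) \ge 8\binom{t_4}{2}+10t_4-27$ and $t_4 \ge 176$) are included among our assumptions. Next, the numerical hypothesis $|\A| \le h/2$ reduces to $2m+i \le 3t_5$, which holds because $t_5 \ge t_4 \ge (2m+i)/3$ by assumption. Finally, choosing the $\kappa_0$ in the present lemma at least $\kappa_0'/3$, where $\kappa_0'$ is the constant supplied by Lemma~\ref{lem:T4upperMG}, guarantees $h \ge 3t_5 \ge 3t_4 \ge 3\kappa_0 \ge \kappa_0'$.

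The only subtlety is that Lemma~\ref{lem:T4upperMG} additionally requires $|\A| \ge 3$. When $|\A| = 2m + i \le 2$, I would instead invoke the trivial bound $e(H) \le \binom{h}{2}$ and verify by a short direct computation from the definition~\eqref{eq:defpprime} of~$p$ that $\binom{h}{2} \le p(h, a)$ for each $a\in\{0,1,2\}$ and all sufficiently large~$h$ (in each case the second clause of~\eqref{eq:defpprime} applies and the inequality holds with linear slack in~$h$). Combining the two cases yields $e(\T_5 \cup \M \cup \I) = e(H) \le p(3t_5 + 2m + i,\, 2m+i)$, as required.

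I do not expect any serious obstacle here: all the genuine combinatorial work is carried out in Lemma~\ref{T4rotate} and Lemma~\ref{lem:T4upperMG}, so this lemma is essentially a packaging step that amounts to checking hypotheses, handling the small $|\A|$ boundary, and absorbing the various absolute constants ($176$, $\kappa_0'$, and the threshold beyond which the small-$a$ inequalities hold) into the single constant $\kappa_0$ via the assumption $t_4 \ge \kappa_0$.
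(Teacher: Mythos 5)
Your proposal is correct and follows essentially the same route as the paper: invoke Lemma~\ref{T4rotate} to obtain the "no triangle collection covers three or more vertices of $\M\cup\I$" condition, then apply Lemma~\ref{lem:T4upperMG} to $G[\T_5\cup\M\cup\I]$ with $A=V(\M)\cup\I$. The paper's proof is terse and does not explicitly check the side conditions of Lemma~\ref{lem:T4upperMG}; you do check them, and your checks are right: $|A|\le h/2$ is exactly $2m+i\le 3t_5$, guaranteed by $t_5\ge t_4\ge(2m+i)/3$; the order condition follows from $h\ge 3t_4\ge 3\kappa_0$; and for the overlooked boundary case $|A|=2m+i\le 2$, a direct computation from~\eqref{eq:defpprime} (with $9a\le h$ since $h\ge 3\cdot 176$) gives $p(h,0)=\binom{h}{2}+2h+2$, $p(h,1)=\binom{h+1}{2}$, and $p(h,2)=\binom{h}{2}$, so $\binom{h}{2}\le p(h,a)$ holds for $a\in\{0,1,2\}$ — note it is an equality rather than a strict inequality with slack at $a=2$, a harmless imprecision in your phrasing. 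So your write-up is a slightly more careful version of the same argument.
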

\begin{proof}
  Suppose that $e(\T_4)\geq8\binom{t_4}{2}+10t_4-27$ and $t_4\geq\max(176,\kappa_0)$. By Lemma~\ref{T4rotate} there is no set of vertex-disjoint triangles induced by $V(\T_5\cup\M\cup\I)$ which covers three or more vertices of $\M\cup\I$. We then apply Lemma~\ref{lem:T4upperMG} to $G[\T_5\cup\M\cup\I]$, with the partition into $\T_5$ and $\M\cup\I$. We conclude that the number of edges in this graph is at most $p(3t_5+2m+i,2m+i)$ as desired.
\end{proof}

\section{Proof of Theorem~\ref{thm:main}}\label{sec:proof}

  We are now in a position to prove Theorem~\ref{thm:main}. The basic idea is the same as for the proof of Lemma~\ref{lem:tri1}. We assume Setup~\ref{setup}, and put together our various upper bounds on edges between parts to obtain a function of six variables (the sizes of the six parts) which upper bounds the number of edges in $G$. We then show that this function is maximised, subject to the constraints $t_1+t_2+t_3+t_4=k$ and $2m+i=n-3k$, by $e\big(E_i(n,k)\big)$ for some $i\in[4]$.
  
  A small problem with this strategy is that Lemma~\ref{clm:T4upper}, which we would like to use to provide one of our upper bounds, only applies if $e(\T_4)\ge 8\binom{t_4}{2}+10t_4-27$. We therefore have to handle the case that $e(\T_4)\le8\binom{t_4}{2}+10t_4-28$ separately. We need to define a function, which we obtain as follows. Summing the bounds in
Lemmas~\ref{clm:NRupper} and~\ref{clm:InTupper}, together with
the assumption $e(\T_4)\le 8\binom{t_4}{2}+10t_4-28$, we see that the following function bounds above $e(G)$.
\begin{equation}\label{eq:Defngsmall}
  \begin{split}\index{$g_s(t_1,t_2,t_3,t_4,m,i)$}
    g_s(t_1,t_2,t_3,t_4,m,i):= & f(t_1,t_2,t_3,t_4,m,i) +im+m^2\\
    &+(3+3m)t_4+(2+i)t_4+8\binom{t_4}{2}+10t_4-28\;.
  \end{split}
\end{equation}

The maximisation of $g_s(t_1,t_2,t_3,t_4,m,i)$ subject to
$t_1+t_2+t_3+t_4=k$ and $2m+i=n-3k$ is a matter of calculation which we defer to Appendix~\ref{sec:max}.

\begin{lemma}\label{lem:maxgsmall} If $n\ge 8406$  and $(\ttt_1,\ttt_2,\ttt_3,\ttt_4,\mmm,\iii)\in F(n,k)$ then
  \begin{equation*}
    g_s(\ttt_1,\ttt_2,\ttt_3,\ttt_4,\mmm,\iii)\le
    \max_{j\in[4]}e\big(E_j(n,k)\big)\,.
  \end{equation*}
\end{lemma}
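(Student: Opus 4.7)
The strategy is to combine a dimensional reduction---bringing us into the setting handled by Lemma~\ref{lem:maxf}---with a direct polynomial comparison against $e\big(E_4(n,k)\big)$ in the residual regime.

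First, I would argue that at a maximiser we may assume $\ttt_3=0$. A term-by-term bookkeeping of the definitions~\eqref{eq:deffprime},~\eqref{eq:deffixed}, and~\eqref{eq:Defngsmall} gives
\[g_s(\ttt_1,\ttt_2,0,\ttt_3+\ttt_4,\mmm,\iii)-g_s(\ttt_1,\ttt_2,\ttt_3,\ttt_4,\mmm,\iii)=7\ttt_3\,,\]
so merging $\T_3$ into $\T_4$ never decreases $g_s$ while preserving the constraints defining $F(n,k)$.

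Second, having set $\ttt_3=0$, I would compute an analogue of~\eqref{eq:KostaCafe}, namely
\[g_s(\ttt_1+\ttt_4,\ttt_2,0,0,\mmm,\iii)-g_s(\ttt_1,\ttt_2,0,\ttt_4,\mmm,\iii)=\ttt_4\bigl(\mmm+\iii-\ttt_2-12\bigr)-\binom{\ttt_4}{2}\,.\]
Whenever this quantity is non-negative I shift all $\T_4$-weight onto $\T_1$; since the identity $g_s(\ttt_1+\ttt_4,\ttt_2,0,0,\mmm,\iii)=f(\ttt_1+\ttt_4,\ttt_2,0,0,\mmm,\iii)+\mmm\iii+\mmm^2-28$ holds by inspection, Lemma~\ref{lem:maxf} then bounds the right-hand side by $\max_{j\in[3]}e\big(E_j(n,k)\big)\le\max_{j\in[4]}e\big(E_j(n,k)\big)$.

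The hard part is the complementary regime, in which the second shift is not improving: here $\mmm$ is forced close to $(n-3k)/2$ (so $\iii$ is small) and $\ttt_4$ is comparable to $k$. I would compare $g_s$ directly to $e\big(E_4(n,k)\big)$ via~\eqref{eq:defEi}. Eliminating $\iii=n-3k-2\mmm$ and $\ttt_1=k-\ttt_2-\ttt_4$, the target inequality becomes a polynomial inequality in $(\ttt_2,\ttt_4,\mmm,k)$ with $n$ as a parameter. The leading quadratic behaviour is favourable: $e(E_4(n,k))$ scales as $\tfrac{n^2}{2}-3kn+9k^2$ by~\eqref{eq:defEi}, whereas the corresponding $\ttt_4^2$-contribution in $g_s$ is only $4\ttt_4^2$ from $8\binom{\ttt_4}{2}$, in contrast to the $\tfrac92(3\ttt_4)^2/2$ one would get from a dense $\T_4$. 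The hypothesis $n\ge 8406$ arises precisely as the threshold where this quadratic gain dominates the accumulated $O(n)$ and $O(1)$ error terms, notably the $-28$ in~\eqref{eq:Defngsmall} and the additive constants in~\eqref{eq:defEi}. The verification itself is elementary but lengthy, of the same polynomial-inequality type that the paper consigns to Appendix~\ref{sec:max}.
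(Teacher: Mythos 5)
Your first two computations are correct. The merge identity $g_s(\ttt_1,\ttt_2,0,\ttt_3+\ttt_4,\mmm,\iii)-g_s(\ttt_1,\ttt_2,\ttt_3,\ttt_4,\mmm,\iii)=7\ttt_3$ (for $\mmm,\iii\ge1$; the other cases give an even larger non-negative difference) and the shift identity $g_s(\ttt_1+\ttt_4,\ttt_2,0,0,\mmm,\iii)-g_s(\ttt_1,\ttt_2,0,\ttt_4,\mmm,\iii)=\ttt_4(\mmm+\iii-\ttt_2-12)-\binom{\ttt_4}{2}$ both check out, and the bookkeeping identity $g_s(\ttt_1+\ttt_4,\ttt_2,0,0,\mmm,\iii)=f(\ttt_1+\ttt_4,\ttt_2,0,0,\mmm,\iii)+\iii\mmm+\mmm^2-28$ is right.

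The gap is in the third step. When your shift is non-improving you must have $\mmm+\iii<\ttt_2+12+\tfrac{\ttt_4-1}{2}$, and since $\mmm+\iii\ge\tfrac{n-3k}{2}$ and $\ttt_2+\ttt_4\le k$ this only forces $k>\tfrac{n-23}{5}$. But $e\big(E_4(n,k)\big)$ is the correct comparison term only for $k\gtrsim 0.307n$; in the whole band $\tfrac{n}{5}\lesssim k\lesssim 0.307n$ the maximum over $j\in[4]$ is achieved by $E_1$, $E_2$, or $E_3$ instead. Concretely, take $k=n/4$ and the feasible point $(\ttt_1,\ttt_2,\ttt_3,\ttt_4,\mmm,\iii)=(0,0,0,k,\tfrac{n-3k}{2},0)$: your shift is not improving there, yet
\[g_s=\iii\mmm+\mmm^2+(3+3\mmm)\ttt_4+(2+\iii)\ttt_4+8\tbinom{\ttt_4}{2}+10\ttt_4-28=\tfrac{23}{4}k^2+11k-28\approx\tfrac{23}{64}n^2\,,\]
while $e\big(E_4(n,n/4)\big)=\tfrac{5}{16}n^2+\tfrac{3}{4}n+2\approx\tfrac{20}{64}n^2$. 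The discrepancy is of order $n^2$, so no adjustment of the threshold $n\ge 8406$ can save the comparison (here the lemma is rescued only by $e\big(E_3(n,n/4)\big)\approx\tfrac{24}{64}n^2$, which your plan never invokes). The paper avoids this trap: when $\iii$ is large it shifts $\T_3\cup\T_4$ into $\T_2$ rather than $\T_1$, which is profitable precisely because each unit of $\ttt_4$ moved to $\ttt_2$ gains $(\iii-10)$; when $\iii$ is small it first performs the rebalancing $\mmm\mapsto\mmm-25$, $\iii\mapsto\iii+50$ (Case~3) or shifts $\bar\ttt_1$ to $\ttt_1$ directly when $\bar\ttt_1$ is small (Case~4), and only compares against $e\big(E_4\big)$ in the narrow regime $k>43n/140$ (Case~1). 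You would need to incorporate some version of these $\iii$-dependent moves before the residual regime becomes small enough for a direct $E_4$ comparison.
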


The final function, $g_\ell$, that we need to define, which we will show bounds above $e(G)$ provided that $e(\T_4)\ge 8\binom{t_4}{2}+10t_4-27$, is a little more complicated. Its definition is as follows.

If $t_4<\max\big(176,\kappa_0,\tfrac{2m+i}{3}\big)$, then $g_\ell(t_1,t_2,t_3,t_4,m,i)$ is
defined by\index{$g_\ell(t_1,t_2,t_3,t_4,m,i)$}
\begin{equation}\label{defglsmt4}
  g_\ell:=f(t_1,t_2,t_3,t_4,m,i)+im+m^2+(3+3m)t_4+
  (2+i)t_4+\binom{3t_4}{2}\,.
\end{equation}

If $t_4\ge\max\big(176,\kappa_0,\tfrac{2m+i}{3}\big)$ and $t_1\neq 1$ then we set
\begin{equation}\label{defglnorm}
  g_\ell(t_1,t_2,t_3,t_4,m,i):=f(t_1,t_2,t_3,t_4,m,i)+p(3t_4+2m+i,2m+i)\,.
\end{equation}

If $t_4\ge\max\big(176,\kappa_0,\tfrac{2m+i}{3}\big)$ and $t_1=1$ then we set
\begin{equation}\label{defglt11}
  g_\ell(t_1,t_2,t_3,t_4,m,i):=f(t_1,t_2,t_3,t_4,m,i)+p(3t_4+2m+i,2m+i)+20\,.
\end{equation}

The following lemma, whose proof we defer to Appendix~\ref{sec:max}, states that $g_\ell$
is upper bounded as desired.
\begin{lemma}\label{lem:maxgl}
\setcounter{lemma23counter}{\value{theorem}}
  Let $n\ge\max(4\cdot 10^4,900\kappa_0)$ and $k\in\mathbb N$ be given. If $n\le  5k+8$, we have \[\max_{(\ttt_1,\ttt_2,\ttt_3,\ttt_4,\mmm,\iii)\in
  F(n,k)}g_\ell(\ttt_1,\ttt_2,\ttt_3,\ttt_4,\mmm,\iii)\le
  \max_{j\in[4]}e\big(E_j(n,k)\big)\,.\]
\end{lemma}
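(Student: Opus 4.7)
My plan is to perform a case analysis driven by the piecewise definition of $g_\ell$, and in each case reduce the six-variable optimisation to a form that can be compared directly to the four candidate extremal values $e(E_j(n,k))$.

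First I would handle the ``small $\ttt_4$'' branch~\eqref{defglsmt4} of $g_\ell$, in which $g_\ell$ coincides with the function $h$ used in Lemma~\ref{lem:tri1}. In that earlier proof the argument hinged on $n\ge 5k+8$ via~\eqref{eq:KostaCafe}; in the present range $n\le 5k+8$ the inequality there goes the other way, so collapsing $\ttt_3,\ttt_4$ into $\ttt_1$ is no longer directly favourable. However, the constraint $\ttt_4\le\max(176,\kappa_0,(2\mmm+\iii)/3)$ forces $\binom{3\ttt_4}{2}$ to be either bounded by an absolute constant or by $\binom{2\mmm+\iii}{2}$. In the first sub-case I would absorb the $O(1)$ error term and reduce via Lemma~\ref{lem:maxf} to a bound by $\max_{j\in[3]}e(E_j(n,k))$, which is certainly at most $\max_{j\in[4]}e(E_j(n,k))$. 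In the second sub-case $\binom{3\ttt_4}{2}\le\binom{2\mmm+\iii}{2}$ can be absorbed into the estimates coming from the large-$\ttt_4$ branch at the common boundary $3\ttt_4=2\mmm+\iii$.

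The heart of the proof is the ``large $\ttt_4$'' branch, i.e., equations~\eqref{defglnorm} and~\eqref{defglt11}. I would first treat the generic case $\ttt_1\neq 1$ and then observe that the $+20$ correction in the $\ttt_1=1$ branch is easily absorbed by the slack in the main inequality for $n$ large. The problem reduces to maximising
\begin{equation*}
f(\ttt_1,\ttt_2,\ttt_3,\ttt_4,\mmm,\iii)+p(3\ttt_4+2\mmm+\iii,\;2\mmm+\iii)
\end{equation*}
subject to the constraints of $F(n,k)$. Since $p(h,a)$ is piecewise with a break at $h=9a$ that corresponds exactly to the threshold at which $E_4(n,k)$ becomes $K_n$, I would split into the two regimes for $p$ and in each compare the marginal effect of transferring one unit from some $\ttt_j$ ($j\in\{1,2,3\}$) to $\ttt_4$. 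The claim is that in the sub-range of $k$ where $E_4$ is extremal, the maximiser has $\ttt_4=k$ and $\ttt_1=\ttt_2=\ttt_3=0$, giving value $e(E_4(n,k))$; in sub-ranges of $k$ where $E_j$ for $j\neq 4$ is the extremal graph, the maximiser shifts weight to $\ttt_j$ (with the appropriate $\mmm,\iii$ configuration) and the value matches $e(E_j(n,k))$. The marginal-cost comparisons are facilitated by the fact that both $f$ and $p$ are polynomial of degree at most two in their arguments, so it suffices to check finitely many vertices of a polytope.

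The main obstacle is the sheer bookkeeping. The function $f$ itself is defined by four cases depending on whether $\mmm$ or $\iii$ is zero, $p$ by two, and $g_\ell$ by three; moreover the right-hand side $\max_j e(E_j(n,k))$ is itself piecewise (see Table~\ref{tab:transitions}) with transitions at $k\approx 2n/9$, $k\approx n/4$ and $k\approx(5+\sqrt{3})n/22$, each of which must be verified separately. The explicit thresholds $n\ge 4\cdot 10^4$ and $n\ge 900\kappa_0$ in the hypothesis signal that the bounds are not tight and that absorbing lower-order error terms is routine, so I expect no single inequality to be genuinely hard; rather, the whole difficulty lies in managing the combinatorial explosion of sub-cases and presenting them cleanly enough to fit in the appendix.
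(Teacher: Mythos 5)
Your outline correctly identifies the case split by the branches of $g_\ell$ and the fact that the $+20$ in~\eqref{defglt11} is absorbable, but the two mechanisms you propose for closing the argument both fail, and they fail precisely where the real work is. The claim that the quadratic structure of $f$ and $p$ reduces the problem to checking vertices of a polytope is wrong: with $2\mmm+\iii=n-3k$ fixed, the term $\iii\mmm+\mmm^2$ equals $(n-3k)\mmm-\mmm^2$, which is \emph{concave} in $\mmm$. The objective is therefore not convex on $F(n,k)$, and its maximum over $\mmm$ sits in the interior; one must locate that interior optimum explicitly (e.g.\ $\mmm\approx\tfrac{5n}{6}-3k$ in one of the subcases). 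What is actually true, and is the engine of the paper's argument, is a weaker directional statement: transfers such as $(\ttt_j,\ttt_4)\mapsto(\ttt_j+x,\ttt_4-x)$ have positive-coefficient quadratic differences (see~\eqref{eq:E1ax}--\eqref{eq:E1cx} and~\eqref{ht1t4l}--\eqref{ht3t4l}), which pushes the maximiser to $\ttt_4\in\{0,\tfrac{n-3k}{3}\}$, after which a separate one-dimensional optimisation in $\mmm$ and an explicit comparison with each $e(E_j(n,k))$ is still required.

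Your treatment of the small-$\ttt_4$ branch does not close the gap. You correctly notice that the reduction of Lemma~\ref{lem:tri1} via~\eqref{eq:KostaCafe} reverses sign in the present range $n\le 5k+8$, but the fix you sketch does not work. In the sub-case $\ttt_4\le\max(176,\kappa_0)$ the discrepancy is not $O(1)$: the terms $(3+3\mmm)\ttt_4+(2+\iii)\ttt_4$ are $\Theta(n\ttt_4)$ and the reversed~\eqref{eq:KostaCafe} difference can be $\Omega(n\ttt_4)$ in the wrong direction, so ``absorbing'' into Lemma~\ref{lem:maxf} is not justified without a genuine slack estimate. In the other sub-case, ``absorbing $\binom{3\ttt_4}{2}\le\binom{2\mmm+\iii}{2}$ into the estimates from the large-$\ttt_4$ branch at the common boundary'' is not a valid inequality: the two branches of $g_\ell$ use incomparable upper bounds on $e(\T_4\cup\M\cup\I)$, and nothing is gained by comparing them at a single point. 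The case $\ttt_4=\tfrac{n-3k}{3}>0$ really does require its own computation, split by the ranges of $k$ (compare the five subcases in the paper's Case~1), and nothing in your outline substitutes for that. Finally, the assertion that the break $h=9a$ in $p$ corresponds to the threshold where $E_4(n,k)=K_n$ is off by a constant factor: with $\ttt_4=k$, $h\ge 9a$ amounts to $k\ge\tfrac{8n}{27}$, not $k\ge\tfrac{n-2}{3}$.
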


The proof of Theorem~\ref{thm:main} now amounts to verification that the
functions $g_s$ and $g_\ell$ indeed upper bound $e(G)$ as required.

\begin{proof}[Proof of Theorem~\ref{thm:main}]
  Given $n$ and $k$, let $G$ be an $n$-vertex graph which does not contain
  $(k+1)\times K_3$. Further, assume that $n\ge \max(4\cdot 10^4,900\kappa_0)$. We assume
  $G$ is decomposed as in Setup~\ref{setup}.
  
  If $n>5k+8$, then by Lemma~\ref{lem:tri1} we have
  \[e(G)\le e\big(E_1(n,k)\big)\,,\]
  so we may now assume that $n\le 5k+8$.
  
  If $e(\T_4)\le 8\binom{t_4}{2}+10t_4-28$, then our situation is exactly as
  in~\eqref{eq:Defngsmall}, i.e., by Lemma~\ref{lem:maxgsmall}
  we have
  \[e(G)\le
  g_s(t_1,t_2,t_3,t_4,m,i)\le\max_{j\in[4]}e\big(E_j(n,k)\big)\;,\] which completes
  the proof in this case.
  
  If on the other hand $e(\T_4)\ge 8\binom{t_4}{2}+10t_4-27$, we have the
  following fact.  
  \begin{AuxiliarySubClaimM}\label{fac:glworks}
    If $e(\T_4)\ge 8\binom{t_4}{2}+10t_4-27$ then there exist $c_1,c_2\in\{0,1\}$ such that we have
    \[e(G)\le g_\ell(t_1-c_1,t_2-c_2,t_3,t_4+c_1+c_2,m,i)\,.\]
    Furthermore, we have $t_1-c_1\ge 0$ and $t_2-c_2\ge 0$.
  \end{AuxiliarySubClaimM}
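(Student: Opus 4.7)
The plan is to set $c_j := 1$ if $\T_j \subseteq \T_5$ and $c_j := 0$ otherwise, for $j = 1, 2$. Since $\T_5$ contains $\T_j$ only when $t_j = 1$, this gives $c_j = 1 \Rightarrow t_j = 1 \Rightarrow t_j - c_j = 0$, so in every case $t_j - c_j \ge 0$, establishing the non-negativity requirement. Let $t_5 := t_4 + c_1 + c_2 = |\T_5|$ and $\tilde\T_j := \T_j \setminus \T_5$, so $|\tilde\T_j| = t_j - c_j$. We decompose $e(G) = e(\T_5 \cup \M \cup \I) + R$, where $R$ collects all edges with at least one endpoint in $V(\tilde\T_1) \cup V(\tilde\T_2) \cup V(\T_3)$, and bound the two summands separately.

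For $R$ we sum the bounds of Lemmas~\ref{clm:NRupper}, \ref{clm:TiTj}, and~\ref{clm:InTupper} applied to the modified partition $(\tilde\T_1, \tilde\T_2, \T_3, \T_5, \M, \I)$. Because $\tilde t_j = 0$ whenever $c_j = 1$, the hypothesis $t_j \ne 1$ of Lemma~\ref{clm:TiTj} is automatically satisfied in that case. The only potentially problematic redirected edges are $e(\tilde\T_j, \T_5) = e(\tilde\T_j, \T_4) + c_{3-j}\, e(\tilde\T_j, \T_{3-j})$, whose overshoot above the corresponding $f$-term must be accounted for. The principal case is $\tilde t_1 = 1$ (i.e., $t_1 = 1$ and $\T_1 \not\subseteq \T_5$), for which the definition of $\T_5$ gives $e(\T_1, \T_4) \le 7t_4 + 18$; combined with the trivial $e(\T_1, \T_2) \le 9$ this yields overshoot at most $18 + 2c_2 \le 20$, exactly what the $+20$ in~\eqref{defglt11} absorbs. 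The symmetric concern at $\tilde t_2 = 1$ with $c_1 = 1$ gives at most an overshoot of $1$ in $e(\tilde\T_2, \T_5)$, but this is structurally compensated: the proof of part~\ref{T2,T1+M} of Lemma~\ref{clm:InTupper} shows that $e(\T_1, \T_2) > 7$ forces $e(\M, \T_2) = 0$, which leaves more than enough slack in the $(2+3m)\tilde t_2$ term of $f$.

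In the large-$t_5$ regime, where $t_5 \ge \max(176, \kappa_0, \tfrac{2m+i}{3})$, Lemma~\ref{clm:T4upper} applies (the required $t_4 \ge \max(\cdot)$ holds since $t_4 \ge t_5 - 2$ and~$n$ is chosen large enough) to give $e(\T_5 \cup \M \cup \I) \le p(3t_5 + 2m + i, 2m + i)$, and summing with the $R$-bound matches~\eqref{defglnorm} when $\tilde t_1 \ne 1$ or~\eqref{defglt11} when $\tilde t_1 = 1$. In the small-$t_5$ regime we instead use the trivial bound $e(\T_5) \le \binom{3t_5}{2}$, together with $e(\M) \le m^2$, $e(\M, \I) \le im$, and $e(\T_5, \M) + e(\T_5, \I) \le (3+3m)t_5 + (2+i)t_5$; summing reproduces~\eqref{defglsmt4}, with the minor overshoots arising from $\T_1, \T_2$ triangles promoted into $\T_5$ absorbed by the slack of $\binom{3t_5}{2}$ versus the actual $e(\T_5)$. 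The principal obstacle throughout is the careful bookkeeping of these overshoots to verify that each case of $g_\ell$'s piecewise definition is correctly calibrated, and in particular the subtle structural argument ruling out the need for an analogous correction term in the $\tilde t_2 = 1$ case.
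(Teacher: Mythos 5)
Your choice of $c_1,c_2$ (namely $c_j=1$ iff $\T_j\subseteq\T_5$) agrees with what the paper does in the cases where Lemma~\ref{clm:T4upper} is used, and the non-negativity observation is correct. But your case division, and with it the justification for applying Lemma~\ref{clm:T4upper}, has a real gap, and the accounting in the small regime does not hold up.

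First, you split on whether $t_5$ is large, and then assert that the hypothesis $t_4\ge\max(176,\kappa_0,\tfrac{2m+i}{3})$ of Lemma~\ref{clm:T4upper} ``holds since $t_4\ge t_5-2$ and $n$ is chosen large enough''. That implication is false: $t_5\ge\max(\cdot)$ and $t_4\ge t_5-2$ only give $t_4\ge\max(\cdot)-2$, and taking $n$ large does nothing to close a gap of $2$ against a fixed constant like $176$ or $\kappa_0$ (nor against $\tfrac{2m+i}3$, which is determined by $n$ and $k$, not by your freedom in choosing $n$). Moreover the piecewise definition of $g_\ell$ branches on the \emph{argument} $t_4+c_1+c_2$; if $t_4=\max(\cdot)-1$ and $c_1+c_2\ge 1$, your argument falls into the regime where $g_\ell$ is given by~\eqref{defglnorm} or~\eqref{defglt11} and involves $p(3t_5+2m+i,2m+i)$, but you cannot invoke Lemma~\ref{clm:T4upper} to produce that bound because $t_4<\max(\cdot)$. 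The paper sidesteps this cleanly by splitting on $t_4$ itself: when $t_4<\max(\cdot)$ it simply takes $c_1=c_2=0$ (regardless of what $\T_5$ contains), so the value of $g_\ell$ is given by~\eqref{defglsmt4} and no appeal to Lemma~\ref{clm:T4upper} is needed.

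Second, your treatment of the small regime is not rigorous. You propose bounding $e(\T_5,\M)+e(\T_5,\I)\le(3+3m)t_5+(2+i)t_5$, but the $(3+3m)$ per-triangle bound on edges to $\M$ relies on the fact (used in the proof of Lemma~\ref{clm:InTupper}\ref{Ti,T1+M}) that at most one $\M$-edge sees a triangle of $\T_3$ or $\T_4$. A triangle of $\T_1$ is, by definition, seen by \emph{two} $\M$-edges, and the only bound available for it is $e(T,\M)\le 4m$ from Lemma~\ref{clm:NRupper}\ref{MT1}. When $m>3$ this exceeds $3m+3$, and the discrepancy is of order $m$. You wave at the ``slack of $\binom{3t_5}2$ versus the actual $e(\T_5)$'', but there is no guaranteed slack there: $\T_5$ can be a clique, making $e(\T_5)=\binom{3t_5}2$ exactly, and indeed the condition $e(\T_1,\T_4)>7t_4+18$ that puts $\T_1$ into $\T_5$ is entirely compatible with $\T_5$ being a clique. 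So this handwave does not close the gap, and the paper's decision to keep $c_1=c_2=0$ in this case (so that the $\T_1$ and $\T_2$ triangles remain in their correct $f$-slots, receiving the $4m\ttt_1+2i\ttt_1$ and $(2+3m)\ttt_2$ allowances) is what actually makes the bookkeeping work.

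Finally, a smaller error: you write that the proof of Lemma~\ref{clm:InTupper}\ref{T2,T1+M} shows $e(\T_1,\T_2)>7$ forces $e(\M,\T_2)=0$. It shows no $\M$-edge \emph{sees} the $\T_2$ triangle, which gives $e(\M,\T_2)\le 3m$, not $0$.

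The paper's proof proceeds by a five-way case distinction on $t_4$ and on whether $e(\T_1,\T_4)>7t_1t_4+18$ and $e(\T_2,\T_4)>8t_2t_4$, choosing $c_1,c_2$ accordingly, and in each case summing precisely those bounds from Lemmas~\ref{clm:NRupper},~\ref{clm:TiTj},~\ref{clm:InTupper} and~\ref{clm:T4upper} that apply; you would need to reproduce that case analysis, or find an equally careful substitute, to make this argument go through.
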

  \begin{factproof}[Proof of Claim~\ref{fac:glworks}] 
  We distinguish five cases.
  
    {\sl Case~1:}  $t_4< \max\big(176,\kappa_0,\tfrac{2m+i}{3}\big)$.
    
    We take $c_1=c_2:=0$, and sum the
    bounds~\eqref{eq:lpd} and~\ref{I}--\ref{M} of Lemma~\ref{clm:NRupper}
    together with the trivial bound $e(\T_4)\le\binom{3t_4}{2}$. We obtain
    \[e(G)\le f(t_1,t_2,t_3,t_4,m,i)+im+m^2+(3+3m)t_4+
    (2+i)t_4+\binom{3t_4}{2}\,\]
    and so by~\eqref{defglsmt4} we have $e(G)\le g_\ell(t_1,t_2,t_3,t_4,m,i)$.
    
    {\sl Case~2:} $t_4\ge \max\big(176,\kappa_0,\tfrac{2m+i}{3}\big)$, $e(\T_1,\T_4)\le 7t_1t_4+18$
    and $e(\T_2,\T_4)\le 8t_2t_4$.
    
    We take again $c_1=c_2:=0$. By definition we have $\T_5=\T_4$. We sum
    the bounds~\ref{MT1}--\ref{T3T4} of Lemma~\ref{clm:NRupper}, the bounds~\ref{T2,T1+M} and the
    $j=3$ cases of~\ref{Ti,T1+M} and~\ref{Ti,T1+I} of Lemma~\ref{clm:InTupper},
    the bound $e(\T_4\cup\M\cup\I)\leq p(3t_4+2m+i,2m+i)$
    from Lemma~\ref{clm:T4upper} and the assumed $e(\T_2,\T_4)\le 8t_2t_4$.
    These bounds cover all the edges of $G$ except $e(\T_1,\T_4)$,
    and we have $$e(G)-e(\T_1,\T_4)\le f(t_1,t_2,t_3,t_4,m,i)+p(3t_4+2m+i,2m+i)-7t_1t_4\;.$$
    If $t_1\neq 1$, then
    Lemma~\ref{clm:TiTj} part~\ref{T1Ti} gives us that $e(\T_1,\T_4)\le
    7t_1t_4$, and we obtain
    \[e(G)\le
    f(t_1,t_2,t_3,t_4,m,i)+p(3t_4+2m+i,2m+i)\;,\]
    which is in correspondence with~\eqref{defglnorm}. If
    $t_1=1$, then the assumed $e(\T_1,\T_4)\le 7t_1t_4+18$ gives us
    \[e(G)\le
    f(t_1,t_2,t_3,t_4,m,i)+p(3t_4+2m+i,2m+i)+18\;,\]
    and by~\eqref{defglt11} we have $e(G)\le g_\ell(t_1,t_2,t_3,t_4,m,i)$.
    
    {\sl Case~3:} $t_4\ge\max\big(176,\kappa_0,\tfrac{2m+i}{3}\big)$, $e(\T_1,\T_4)\le 7t_1t_4+18$
    and $e(\T_2,\T_4)>8t_2t_4$.
    
    We take $c_1:=0$ and $c_2:=1$. Observe that by Lemma~\ref{clm:TiTj}
    part~\ref{T2Ti} $e(\T_2,\T_4)>8t_2t_4$ implies that $t_2=1$. By definition
    of $\T_5$ we have $\T_5=\T_4\cup\T_2$, and by Lemma~\ref{clm:T4upper} we
    have $e(\T_5\cup\M\cup\I)\le p(3t_4+2m+i+3,2m+i)$.
    
    We use the bounds in parts~\ref{MT1}--\ref{T1} and~\ref{T3T4} of
    Lemma~\ref{clm:NRupper} and the $j=3$ cases
    of parts~\ref{Ti,T1+M} and~\ref{Ti,T1+I} of Lemma~\ref{clm:InTupper}.
    Together with the above bound on $e(\T_5\cup\M\cup\I)$, these bounds cover all the
    edges of $G$ except $e(\T_1,\T_2\cup\T_4)$, and we have 
    \begin{align}
\label{eq:HO1}
\begin{split}
e(G)-e(\T_1,\T_2\cup\T_4)&\le
f(t_1,0,t_3,t_4,m,i)\\
%&-\big(7t_1(t_3+t_4)+3mt_3+it_3\big)\;\mbox{.}
&~~+p(3t_4+2m+i+3,2m+i)-7t_1t_4+8t_2t_3\;\mbox{.}
\end{split}
% \\
% \label{eq:HO2}
% \begin{split}    
% e(G)-e(\T_1,\T_2\cup\T_4)&\le
% f(t_1,t_2,t_3,t_4,m,i)+p(3t_4+2m+i+3,2m+i)\\
% &-(7t_1(t_3+t_4)+3mt_3+it_3)-8t_2t_4\;.
% \end{split}
    \end{align}
    If $t_1\neq 1$, then the $j=2$ and $j=4$ cases of Lemma~\ref{clm:TiTj} part~\ref{T1Ti} yield $e(\T_1,\T_2\cup\T_4)\le
    7t_1(t_2+t_4)=7t_1t_5$, and we obtain from~\eqref{eq:HO1}
    that
     \[e(G)\le
    f(t_1,0,t_3,t_5,m,i)+p(3t_4+2m+i+3,2m+i)\,.\]
    By~\eqref{defglnorm} we have $e(G)\le g_\ell(t_1,0,t_3,t_4+1,m,i)$. If
    $t_1=1$, then we use instead the trivial $e(\T_1,\T_2)\le 9$ and the assumed
    $e(\T_1,\T_4)\le 7t_1t_4+18$ to obtain $e(\T_1,\T_2\cup\T_4)\le 7t_1t_5+20$,
    and hence
    \[e(G)\le
    f(t_1,0,t_3,t_5,m,i)+p(3t_4+2m+i+3,2m+i)+20\,,\]
    and by~\eqref{defglt11} we have $e(G)\le g_\ell(t_1,0,t_3,t_4+1,m,i)$.
    
    {\sl Case~4:} $t_4\ge\max\big(176,\kappa_0,\tfrac{2m+i}{3}\big)$, $e(\T_1,\T_4)>
    7t_1t_4+18$ and $e(\T_2,\T_4)\le 8t_2t_4$.
    
    We take $c_1:=1$ and $c_2:=0$. By Lemma~\ref{clm:TiTj} part~\ref{T1Ti} we have
    $t_1=1$. Thus we have $\T_5=\T_4\cup\T_1$. Summing the bounds in
    parts~\ref{IT2}--\ref{T3T4} of Lemma~\ref{clm:NRupper} and those in part~\ref{T2,T1+M} and the $j=3$ cases
    of parts~\ref{Ti,T1+M} and~\ref{Ti,T1+I} of
    Lemma~\ref{clm:InTupper}, together with the assumed $e(\T_2,\T_4)\le8t_2t_4$ and the bound
    $e(\T_5\cup\M\cup\I)\le p(3t_4+2m+i+3,2m+i)$ from
    Lemma~\ref{clm:T4upper}, we obtain
    \[e(G)\le
    f(0,t_2,t_3,t_5,m,i)+p(3t_4+2m+i+3,2m+i)+20\;.\]
%     where the subtracted terms come from the fact that we have stronger bounds
%     on $e(\T_1,\T_2\cup\T_3)$ than on $e(\T_4,\T_2\cup\T_3)$.
    By~\eqref{defglt11} we have $e(G)\le g_\ell(0,t_2,t_3,t_4+1,m,i)$.
    
    {\sl Case~5:} $t_4\ge\max\big(176,\tfrac{2m+i}{3}\big)$, $e(\T_1,\T_4)>7t_1t_4+18$ and
    $e(\T_2,\T_4)>8t_2t_4$.
    
    We take $c_1=c_2=1$. By Lemma~\ref{clm:TiTj} parts~\ref{T1Ti} and~\ref{T2Ti}
    we have $t_1=t_2=1$, and thus we have $\T_5=\T_1\cup\T_2\cup\T_4$. Summing
    the bounds in part~\ref{T3T4} of Lemma~\ref{clm:NRupper} and the $j=3$ cases
    of parts~\ref{Ti,T1+M} and~\ref{Ti,T1+I} of Lemma~\ref{clm:InTupper},
    together with the bound
    $e(\T_5\cup\M\cup\I)\le p(3t_4+2m+i+6,2m+i)$ from
    Lemma~\ref{clm:T4upper}, we obtain
    \[e(G)\le
    f(0,0,t_3,t_5,m,i)+p(3t_4+2m+i+6,2m+i)\;.\]
%     where as before the subtracted term comes from the fact that we have a
%     stronger bound on $e(\T_1,\T_3)$ than on $e(\T_4,\T_3)$. 
    By~\eqref{defglt11}
    we have $e(G)\le g_\ell(0,0,t_3,t_4+2,m,i)$.
  \end{factproof}
 
  Observe that for
  any $n$-vertex, $(k+1)\times K_3$-free graph $G$ decomposed as in
  Setup~\ref{setup} we have $(t_1,t_2,t_3,t_4,m,i)\in F(n,k)$. By
  Claim~\ref{fac:glworks} there are $c_1,c_2\in\{0,1\}$ such that
  $(t_1-c_1,t_2-c_2,t_3,t_4+c_1+c_2,m,i)\in F(n,k)$ and such that $e(G)\le g_\ell(t_1-c_1,t_2-c_2,t_3,t_4+c_1+c_2,m,i)$.
  By Lemma~\ref{lem:maxgl} we thus obtain
  \[e(G)\le \max_{j\in[4]}e\big(E_j(n,k)\big)\;,\] as desired.
\end{proof}

\section{Graphs with few triangles touching a given set}\label{sec:fewtriangles}
In this section, we prove Lemma~\ref{lem:T4upperMG}. The extremal
problem of that lemma is not a very natural one. Also, we remark that Lemma~\ref{lem:T4upperMG} is sharp only
when $h\ge 9a$. This is the regime in
which we need the exact answer. 

However the closely related extremal problem of bounding the number of edges in a graph $H$ on $h$
vertices with no triangle touching a given set $A\subseteq V(H)$ of size
$a$ is quite natural. We already studied it in two previous
papers~\cite{AllBoettHlaPig_Turannical,AllBoettHlaPig_Strengthening}, where
we (respectively) determined the extremal function and proved uniqueness
and stability for the problem. We need a special case of the extremal
result of~\cite{AllBoettHlaPig_Turannical}.

\begin{theorem}
  \label{thm:TuranOrig}
  Let $H$ be a graph on $h$ vertices, and $A$ be a subset of $V(H)$ of size
  $a\le\tfrac{h}{2}$ such that no triangle of $H$ intersects $A$. Then we
  have
  \[e(H)\le \binom{h-2a}{2}+a(h-a)\,.\]
\end{theorem}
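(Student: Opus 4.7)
The plan is to reduce an extremal $H$ (via shifting) to a canonical structure, and then optimise directly. Without loss of generality, assume $H$ is edge-maximal subject to no triangle meeting $A$ and with $|A|=a$, $|B|=h-a$ fixed, where $B:=V(H)\setminus A$.

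The structural claim is: we may assume there is a set $B_1\subseteq B$ such that (i) $N(v)=B_1$ for every $v\in A$, (ii) both $A$ and $B_1$ are independent, and (iii) all edges inside $B_2:=B\setminus B_1$ and between $B_1$ and $B_2$ are present. The reduction proceeds by two shifting steps. First, if $uv\in E(H)$ with $u,v\in A$, then $N(u)\cap N(v)=\emptyset$ (otherwise $uvw$ would be a triangle meeting $A$) and a local re-routing of the edges at $u$ and $v$ lets us eliminate the edge $uv$ without decreasing $e(H)$, reducing to the case that $A$ is independent. Second, once $A$ is independent, given $v_1,v_2\in A$ with $|N(v_2)|\ge|N(v_1)|$ we replace the edges at $v_1$ by a copy of those at $v_2$: this weakly increases $e(H)$; the new $N(v_1)=N(v_2)$ is independent since $N(v_2)$ was; and for any other $w\in A$ the neighbourhood $N(w)$ is unchanged, because independence of $A$ gives $v_1\notin N(w)$, so the rerouted edges are not in $H[N(w)]$. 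Iterating makes all $N(v)$ equal to a common set $B_1$, which is independent because $B_1=N(v)$ for any $v\in A$. Edge-maximality now forces all edges inside $B_2$ and between $B_1$ and $B_2$: any such edge $xy$ could only create a triangle meeting $A$ if some $v\in A$ were adjacent to both $x$ and $y$, forcing $x,y\in B_1$, a contradiction.

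With this structure, setting $d:=|B_1|$ gives
\[
  e(H) = ad + d(h-a-d) + \binom{h-a-d}{2}.
\]
This is a concave quadratic in $d$ on $[0,h-a]$; routine comparison of adjacent integer values shows that the maximum is attained at $d=a$, which is feasible by the hypothesis $a\le h/2\le h-a$. Substituting yields
\[
  e(H) \le a\cdot a + a(h-2a) + \binom{h-2a}{2} = a(h-a) + \binom{h-2a}{2},
\]
as required.

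The main obstacle is the first shifting step, namely reducing to the case that $A$ is independent: each local re-routing must preserve the independence of $N(w)$ for \emph{every} $w\in A$, not merely for the two vertices whose incident edges are being rearranged. Once this is achieved the second shift and the final optimisation are routine.
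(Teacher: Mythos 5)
First, a point of reference: the paper does not prove Theorem~\ref{thm:TuranOrig} at all --- it is imported as a special case of the main result of~\cite{AllBoettHlaPig_Turannical} --- so there is no in-paper argument to compare yours against, and your proposal must stand on its own. It does not: the gap is precisely the one you flag yourself, namely the first shifting step. You assert that for an edge $uv$ with $u,v\in A$ ``a local re-routing of the edges at $u$ and $v$ lets us eliminate the edge $uv$ without decreasing $e(H)$'', but no such re-routing is exhibited, and the obvious candidate fails. The natural move is to reset $N(u):=N(v)$; for \emph{adjacent} $u,v$ this gives $\deg(u)=|N(v)\setminus\{u\}|=\deg(v)-1$ (since $u\in N(v)$ but $u$ cannot become its own neighbour), so when $\deg(u)=\deg(v)$ the edge count strictly drops; and even when it does not drop, $u$ acquires all of $N(v)\cap A$ as new neighbours, so $e(H[A])$ need not decrease and the process need not terminate. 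The family $\mathcal{H}_A$ described immediately after the theorem is a warning sign here: it contains extremal graphs in which $A$ is \emph{not} independent (the bipartition of $A\cup B$ is arbitrary), so ``WLOG $A$ is independent'' is not something extremality hands you for free --- it genuinely requires an argument, which is missing. A proof whose author identifies its first step as ``the main obstacle'' and leaves it unresolved is not a proof.

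The remainder is fine (the Zykov shift on an independent $A$, the edge-maximality argument for $B_2$, and the optimisation over $d=|B_1|$ with maximum at $d=a$ all check out), and the gap is repairable without ever making $A$ independent. Your second shift is in fact valid for every \emph{non-adjacent} pair $u,v\in A$ even when $A$ has edges: the only new danger is a vertex $w\in A$ adjacent to $v$, for which one needs $N(v)\cap N(w)=\emptyset$ --- and this holds because a common neighbour of the adjacent pair $v,w\in A$ would form a triangle meeting $A$. Iterating (with a suitable potential to guarantee termination), one may assume $N(u)=N(v)$ for all non-adjacent $u,v\in A$, whence $H[A]$ is complete multipartite; since $N(u)$ contains all classes other than that of $u$ and must be independent, there are at most two classes. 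The one-class case is your computation; the two-class case requires a separate (slightly longer, but routine) optimisation over the sizes of the two classes and their neighbourhoods, and is exactly where the remaining members of $\mathcal{H}_A$ arise. Without either this case analysis or an honest construction of your claimed re-routing, the argument is incomplete.
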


We also need a stability version of this theorem, proved
in~\cite{AllBoettHlaPig_Strengthening}. To this end, we consider the
following family~$\mathcal{H}_A$ of graphs on the vertex set $[h]$ and with
a distinguished set $A\subset[h]$, $|A|=a$ which show the optimality of
Theorem~\ref{thm:TuranOrig}.
% \footnote{In~\cite{AllBoettHlaPig_Strengthening} we showed that the
% set~$\mathcal{H}_A$ does not contain all extremal solutions to the
% problem from Theorem~\ref{thm:TuranOrig}.} 
To construct one graph in~$\mathcal{H}_A$, we take any set $B\subset[h]$ of size~$a$ disjoint from~$A$, put a complete balanced bipartite graph on $A\cup B$ (where the parts of the bipartite graph may be any partition of $A\cup B$) and make all the vertices of $[h]\setminus(A\cup B)$ adjacent to each other and to all the vertices of $B$. 
\begin{theorem}[\cite{AllBoettHlaPig_Strengthening}]
  \label{thm:TuranSt}
  For every $\eps>0$ there exist $\gamma>0$ and $h_0$ such that the following
  holds. Let $H$ be a graph of order $h\ge h_0$ and $A$ be a subset of $V(H)$ of
  size $a\le h/2$ such that no triangle of $H$ intersects $A$. Suppose furthermore that $e(H)\ge\binom{h-2a}{2}+a(h-a)-\gamma h^2$. Then by editing at most $\eps h^2$ pairs in $\binom{V(H)}{2}$ we can obtain a graph in~$\mathcal{H}_A$ (without changing the vertices of~$A$).
\end{theorem}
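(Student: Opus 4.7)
\emph{Approach.} The plan is to combine the local inequality
$\deg(u)+\deg(v)\le h$
(which holds for every edge $uv\in E(H)$ meeting $A$, since the hypothesis forces $N(u)\cap N(v)=\emptyset$) with the exact bound of Theorem~\ref{thm:TuranOrig} in order to locate, inside $H$, a natural candidate partition $V(H)=A\cup B\cup C$ approximating the target structure of $\mathcal{H}_A$, and then to bound the edit distance.

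\emph{Step 1 (edge counts).} Write $x_1:=e(A)$, $x_2:=e(A,V\setminus A)$, $x_3:=e(V\setminus A)$, and $D_A:=2x_1+x_2=\sum_{v\in A}\deg(v)$. The hypothesis together with the trivial bound $x_3\le\binom{h-a}{2}$ gives $x_1+x_2\ge\binom{a+1}{2}-\gamma h^2$. Summing the local inequality over the $x_1+x_2$ edges meeting $A$ yields
\[
\sum_{v\in A}\deg(v)^2+\sum_{w\in V\setminus A}\deg(w)\,d_A(w)\le h(x_1+x_2),
\]
and a direct check shows that both sides equal $a^2 h$ on every graph in $\mathcal{H}_A$. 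Exploiting this tightness together with the structural constraint that $\phi(w)\cap\phi(w')=\emptyset$ for every edge $ww'\in E(H[V\setminus A])$, where $\phi(w):=N(w)\cap A$ (which in turn makes $H[V\setminus A]$ a near-clique), forces $D_A=a^2\pm O(\sqrt{\gamma}h^2)$ and $x_3\ge\binom{h-a}{2}-\binom{a}{2}-O(\sqrt{\gamma}h^2)$. Consequently the complement $\overline{H[V\setminus A]}$ has $O(a^2+\sqrt{\gamma}h^2)$ edges and $\alpha(H[V\setminus A])\le a+O(\sqrt{\gamma}h)$; moreover near-equality in the Cauchy--Schwarz step on $A$ forces $\deg(v)=a\pm O(\sqrt{\gamma}h)$ for all but $O(\sqrt{\gamma}h)$ vertices $v\in A$.

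\emph{Step 2 (partition and edit).} For each typical $v\in A$, $N(v)\cap(V\setminus A)$ is an independent set in $H[V\setminus A]$ of size close to $\alpha(H[V\setminus A])$. Sparsity of $\overline{H[V\setminus A]}$ forces these near-maximum independent sets to coincide pairwise up to $O(\sqrt{\gamma}h)$ vertices, yielding a canonical set $B\subseteq V\setminus A$ of size $a$; set $C:=V\setminus(A\cup B)$. Pick a bipartition of $A\cup B$ matching the observed behaviour of $H[A]$: take $(A,B)$ if $x_1\approx 0$, otherwise split $A$ (and correspondingly $B$) to approximate the edges inside $A$. Let $H'\in\mathcal{H}_A$ be the corresponding extremal graph and tally edge discrepancies across the six pair-types $(A,A),(A,B),(A,C),(B,B),(B,C),(C,C)$; each contributes at most $O(\sqrt{\gamma}h^2)$ to the symmetric difference, so $|E(H)\triangle E(H')|\le\eps h^2$ once $\gamma$ is sufficiently small in terms of $\eps$.

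\emph{Main obstacle.} Pinning down $D_A\approx a^2$ from the local inequality alone gives only the loose bound $D_A\le a(h-a)$; the tighter estimate requires combining with the near-clique structure of $H[V\setminus A]$ derived from the Tur\'an-type constraint $\phi(w)\cap\phi(w')=\emptyset$, which is the most technical ingredient. A second subtlety arises when the target bipartition of $A\cup B$ is split (so $B$ is a union of two independent sets in $H[V\setminus A]$ rather than a single one): identifying the correct partition then requires a finer analysis of the sparse complement $\overline{H[V\setminus A]}$ to separate the two ``levels'' of $d_A$-values among vertices of $B$. An alternative route applies the Szemer\'edi Regularity Lemma to reduce to a cleaner version of the same question on the reduced graph (where all errors are naturally at the $o(s^2)$ scale and Theorem~\ref{thm:TuranOrig} applies directly), at the cost of more technical bookkeeping in the lifting step.
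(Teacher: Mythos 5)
Theorem~\ref{thm:TuranSt} is not proved in this paper at all: it is quoted from \cite{AllBoettHlaPig_Strengthening}, and Section~\ref{sec:conclusion} records that the proof there goes through the Erd\H{o}s--Simonovits stability theorem for triangles. Your direct counting argument is therefore a genuinely different route, but as written it does not close. The pivotal claim of your Step~1 --- that both sides of the summed inequality $\sum_{v\in A}\deg(v)^2+\sum_{w\notin A}\deg(w)\,d_A(w)\le h(x_1+x_2)$ equal $a^2h$ on \emph{every} graph in $\mathcal{H}_A$ --- is false. It holds only for the member whose bipartition of $A\cup B$ is exactly $(A,B)$. For a member with a balanced split $(A_1\cup B_1,\,A_2\cup B_2)$ one has $x_1+x_2=\tfrac{3a^2}{4}$, the left-hand side equals $\tfrac{a^2(h+a)}{2}$ and the right-hand side equals $\tfrac{3a^2h}{4}$; for $a=\Theta(h)$ bounded away from $\tfrac h2$ these differ from each other (and from $a^2h$) by $\Theta(h^3)$. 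Hence a near-extremal $H$ that is close to a split member exhibits no near-equality in this inequality, and the whole ``exploit tightness'' derivation of $D_A\approx a^2$ and of the density of $H[V\setminus A]$ collapses. (The conclusions themselves, e.g.\ $D_A\approx a^2$, are presumably still true, but your route to them is broken, and your own ``Main obstacle'' paragraph concedes that the replacement argument --- upgrading $D_A\le a(h-a)$ by combining with the near-clique structure of $H[V\setminus A]$ --- is not actually carried out.)

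Step~2 fails on the same family of examples. When the target is a split member, half the vertices of $A$ have $N(v)\cap(V\setminus A)\approx B_1$ and the other half have $N(v)\cap(V\setminus A)\approx B_2$, with $B_1,B_2$ disjoint sets of size about $\tfrac a2$; these near-maximum independent sets do not ``coincide pairwise up to $O(\sqrt{\gamma}h)$ vertices''. Moreover $\overline{H[V\setminus A]}$ then has $\Theta(a^2)$ edges (the non-adjacent pairs inside $B_1$ and inside $B_2$), which is exactly the scale your own bound $O(a^2+\sqrt{\gamma}h^2)$ permits, so the ``sparsity'' you invoke cannot force coincidence. You flag this case as a ``subtlety'' but do not resolve it, and resolving it is precisely where the work lies: one must recover $B$ and its two-level $d_A$-structure simultaneously with the bipartition of $A$. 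As it stands the proposal is a plausible programme rather than a proof; the cleanest repair is the alternative you mention in passing, namely reducing to Erd\H{o}s--Simonovits-type stability for triangles (possibly via the Regularity Lemma), which is in fact how \cite{AllBoettHlaPig_Strengthening} proceeds.
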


We will now show how this implies Lemma~\ref{lem:T4upperMG}.

\begin{proof}[Proof of Lemma~\ref{lem:T4upperMG}]
We set $\eps=1/400$ and let $\gamma>0$ and $h_0$ be given by Theorem~\ref{thm:TuranSt}. We set 
\begin{equation}\label{eq:choosekappa}
  \kappa_0=\max\big(10\gamma^{-1},h_0,8000\big)\,.
\end{equation}

Suppose that $h\ge\kappa_0$ and $H$ is an $h$-vertex graph and $A$ is a set
of~$a$ vertices such that no set of vertex disjoint triangles of $H$ covers
more than two vertices of $A$. This implies that we can identify a set of
at most two vertex-disjoint triangles covering a maximum number of vertices
of $A$. Taking the vertices of these triangles, and adding further arbitrary vertices if
necessary, we obtain a set $U$ of six vertices, with $|A\cap U|=2$, such
that $H-U$ has no triangle intersecting $A\setminus U$. Removing all the
edges of $H$ with one or two endpoints in $U$ therefore yields a graph $H'$
in which no triangle intersects~$A$. By Theorem~\ref{thm:TuranOrig}, $H'$
has at most $\binom{h-2a}{2}+a(h-a)$ edges.

There are two cases to deal with, corresponding to the two possibilities in
the definition of $p(h,a)$ in~\eqref{eq:defpprime}. The easier case is
$2a\le h< 9a$, where we do not attempt to prove a sharp extremal
result. Since at most $6h$ edges were removed from $H$ to obtain $H'$, we
have $e(H)\le\binom{h-2a}{2}+a(h-a)+6h=p(h,a)$, which completes the proof
in this case.

We now turn to the case $3\le a\le\tfrac{h}{9}$. Again, if $e(H')<
p(h,a)-6h$ then $e(H)<p(h,a)$ and we are done. So since $p(h,a)\ge
a(h-a)-2(h+2)+\binom{h-2a}{2}$ we may assume
\[e(H')>p(h,a)-6h\geByRef{eq:choosekappa}\binom{h-2a}{2}+a(h-a)-\gamma
h^2\,.\] By Theorem~\ref{thm:TuranSt} we can edit at most $\eps h^2$ pairs
in $\binom{V(H')}{2}$
we obtain an extremal graph $G\in\mathcal{H}_A$ on $V(H)$ with no triangle
intersecting $A$. Recall that $G$ consists of a complete balanced bipartite
graph on a set of $2a$ vertices $A\cup B$ (where~$A$ and~$B$ are not
necessarily the partition classes). The remaining vertices form a clique, and all the edges between them and~$B$ are present. It is
easy to check that since $|A|=|B|=a\le\frac{h}{9}$, any set of $\tfrac{2h}{9}$
vertices of $G$ induces at least
\[\binom{\tfrac{h}{9}}{2}\geByRef{eq:choosekappa} \tfrac{h^2}{200}\]
edges of $G$. Since $G$ was obtained from $H'$ by editing at most $\eps
h^2$ pairs in $\binom{V(H')}{2}$, and $H'$ was obtained from $H$ by deleting edges, it follows
that any set of $\tfrac{2h}{9}$ vertices of $H$ induce at least
$\big(\tfrac{1}{200}-\eps\big)h^2=\tfrac{1}{400}h^2$ edges of $H$.  

We claim that this implies that any set~$C$ of $\tfrac{2h}{9}$ vertices of
$H$ contains a matching with at least seven edges. Indeed, we can find such
a matching greedily, and after removing from~$C$ at most~$6$ matching edges
and all edges incident to these matching edges, we removed at most $12\cdot
\frac{2h}{9}+6<\frac1{400}h^2$ edges from~$C$.

Let $A\cap U=\{v_1,v_2\}$ and recall that $e(H')=e(H'- U)$.
Theorem~\ref{thm:TuranOrig} applied to the graph $H'-U$ on $h-6$ vertices
and the set $A\setminus U$ with $a-2$ vertices (it is indeed possible to apply Theorem~\ref{thm:TuranOrig} because $a-2\le\frac12(h-6)$ by~\eqref{eq:choosekappa} and by $a\le\tfrac{h}{9}$) gives
\[e(H')\le\binom{h-6-2(a-2)}{2}+(a-2)(h-6-(a-2))\,.\]
Observe that if $\deg_H(v_1)+\deg_H(v_2)\le 2h-6a-9$ then we have
\begin{align}
\nonumber
  e(H)&\le e(H') +\deg_H(v_1)+\deg_H(v_2)+4h\\
\nonumber  
  &\le \binom{h-6-2(a-2)}{2}+(a-2)\big(h-6-(a-2)\big) +2h-6a-9+4h\\
\label{eq:job}  
  &=p(h,a)\,,
\end{align}
and we are done. We may therefore assume
$\deg_H(v_1)+\deg_H(v_2)>2h-6a-9$, and since $\deg_H(v_i)\le h-1$ for
$i\in[2]$ it follows that $\deg_H(v_i)\ge h-6a-8\ge\tfrac{2h}{9}$, where
the final inequality follows from $a\le\tfrac{h}{9}$
and~\eqref{eq:choosekappa}.

Since any set of $\tfrac{2h}{9}$ vertices of $H$ contains a matching with
at least seven edges, we conclude that $N_H(v_i)$ contains such a
matching~$M_i$ for $i\in[2]$. Observe that if there were a triangle $xyz$
in $H$ with $x\in A\setminus\{v_1,v_2\}$ then we could use~$M_1$ and~$M_2$ 
to find greedily a collection of vertex-disjoint triangles in $H$
covering $\{x,v_1,v_2\}$. This is a contradiction to the assumption on $H$
that no such collection exists, and we conclude that there is no triangle
of $H$ which intersects $A':=A\setminus\{v_1,v_2\}$.

To complete the proof, we will now show that this final condition that no
triangle of $H$ intersects $A'$ implies that $e(H)\le p(h,a)$. Let~$\prec$
be a linear order of the vertices of~$H$. We apply the following `vertex
duplication' operation successively. If there are non-adjacent vertices
$u_1,u_2$ in $A'$ such that either $\deg_H(u_1)<\deg_H(u_2)$, or
$\deg_H(u_1)=\deg_H(u_2)$ and $u_1\prec u_2$, then we change $H$ by
resetting the neighbourhood of $u_1$ to $N_H(u_2)$. Let $H''$ be the graph
obtained by repeatedly applying this operation until every pair of
non-adjacent vertices of $A'$ has identical neighbourhoods.

By construction, we have $e(H'')\ge e(H)$, and no triangle in $H''$
intersects $A'$. Now $H''[A']$ is a complete partite graph, and since
$H''[A']$ contains no triangles it is a complete bipartite graph. Let its
parts be $Y_3$ and $Y_4$ (the latter of which may have size zero). 
Moreover, all vertices $y\in Y_3$ have the identical neighbourhood $N_{H''}(y)\setminus A'=:Y_1$. Likewise, all vertices $y\in Y_4$ have the identical neighbourhood $N_{H''}(y)\setminus A'=:Y_2$.
If $Y_4=\emptyset$ then we set
$Y_2=\emptyset$. Since no triangle of $H''$ intersects $A'$ the sets $Y_1$
and $Y_2$ are disjoint independent sets in $H''$. Finally, let $X$ be the
remaining vertices of $H''$. We have 
\begin{align*}
  e(H)\le e(H'')&\le \binom{|X|}{2}+\big(|Y_1|+|Y_2|\big)\big(|X|+|Y_3|+|Y_4|\big)\\
  &=\binom{h-a+2-s}{2}+s(h-s)\,, 
\end{align*} 
where $s:=|Y_1|+|Y_2|$. This function is maximised by $s=a-\tfrac{3}{2}$,
and the maximum with $s$ an integer occurs at $s=a-1,a-2$, where the
function is precisely equal to $p(h,a)$. We conclude that $e(H)\le p(h,a)$
as desired. \end{proof}

\section{Concluding remarks}\label{sec:conclusion}
\paragraph{\bf Small values of $n$}We did not try to optimise our arguments in order to reduce $n_0$. Indeed, the value we obtain depends on the relation between $\eps$ and $\gamma$ provided in Theorem~\ref{thm:TuranSt}, and the proof of that result in~\cite{AllBoettHlaPig_Strengthening} makes use of the Stability Theorem of Erd\H{o}s and Simonovits~\cite{E68,S68} for triangles. But there is no `heavy machinery' involved which would cause $n_0$ to become very large. It seems very likely that tracing exact values through these results would lead to a value of $n_0$ here smaller than $10^{10}$. Perhaps Theorem~\ref{thm:main} even holds with $n_0=1$, but we did not spend much effort on trying to find counterexamples for small values of $n$. Certainly our proof will not give such a result even with optimisation.

\medskip

\paragraph{\bf Tilings with larger cliques}It would be natural to ask for
an extension of Theorem~\ref{thm:main} to $(k+1)\times K_r$-free graphs $G$
rather than $(k+1)\times K_3$-free graphs, thus obtaining a density version
of the Hajnal--Szemer\'edi Theorem~\cite{hajnal70:_proof_p} rather than the Corr\'adi--Hajnal
Theorem. The same basic approach as in our proof of Theorem~\ref{thm:main}
seems to be a reasonable strategy for proving such a result: We call a
family $(\mathcal K_r, \mathcal K_{r-1}, \ldots, \mathcal K_1)$ an
\emph{$r$-tiling family} if $\mathcal K_i$ is a collection of disjoint
copies of the clique $K_i$ inside $G$, and the sets $\mathcal K_r, \mathcal
K_{r+1}, \ldots, \mathcal K_1$ partition the vertices of $G$. We then
consider an $r$-tiling family which maximises the vector $(|\mathcal K_r|,
|\mathcal K_{r+1}|, \ldots, |\mathcal K_1|)$ in lexicographic order,
and try to work out bounds on the edge counts inside the sets $\mathcal
K_i$ and between $\mathcal K_i$ and $\mathcal K_j$, relying again on
rotation techniques. Some parts of such an argument can be made to work, but
there are some additional difficulties for $r\ge 4$ that do not appear for
$r=3$. We are not even sure what the complete family of extremal graphs
should be.

\medskip

\paragraph{\bf Tilings with more general graphs}
An extension of Theorem~\ref{thm:main} which seems within the reach of existing
techniques is to get asymptotically tight bounds on the size of a maximal
$H$-tiling (as a function of the density of the host graph) for any
three-colourable graph $H$. The bipartite counterpart for this is the extension
of Theorem~\ref{thm:ErdGal} by Grosu and Hladk\'y~\cite{GroHla09+}. These problems can also be seen as density versions of Koml\'os's extension~\cite{komlosTT} of the Hajnal--Szemer\'edi Theorem to general graphs.
%Both these existing generalisations exhibit the same
%feature: the (asymptotically) extremal graphs are the same as in the original
%problems, except the transitions between different types of graphs occur at
%different points\footnote{There are no discontinuous transitions
%in~\cite{komlosTT} and one discontinuous transition in~\cite{GroHla09+}.}, and
%the blobs forming the block structure are stretched by different factors. The
%actual numerical parameters of these shifts and stretches depend on the graph
%$H$; and for the results~\cite{GroHla09+,komlosTT} they are captured by the
%notion of the so-called critical chromatic number $\chi_{cr}(H)$ introduced
%in~\cite{komlosTT}.
It seems likely that the technique developed by Koml\'os, and
adapted to this setting by Grosu and Hladk\'y, is flexible enough to allow such a generalisation
for $H$-tiling with any fixed 3-colourable graph $H$, and that the
extremal graphs for the problem of $H$-tiling in a graph of a given density will resemble the graphs $E_1,\ldots,E_4$ from
Definition~\ref{def:extremal}, though the part sizes will not be the same as in that definition.

\section{Acknowledgement}
The paper was finalised during the participation at the program Graphs,
Hypergraphs, and Computing at Institut Mittag--Leffler. We would like to
thank the organisers and the staff of the institute for creating a very
productive atmosphere. We had to take our little children with us. We would
like to acknowledge the support of the London Mathematical Society, and
Mathematics Institute at the University of Warwick (JH) for contributing to
childcare expenses that were incurred during this trip, and Emili Simonovits for helping us with babysitting.

Finally, we would like to thank an anonymous referee for their thorough comments.
\bibliographystyle{alpha}
\bibliography{bibl}

\begin{thebibliography}{{E}rd62b}

\bibitem[ABHP]{AllBoettHlaPig_Strengthening}
P.~Allen, J.~B{\"o}ttcher, J.~Hladk{\'y}, and D.~Piguet.
\newblock An extension of {T}ur\'an's {T}heorem, uniqueness, and stability.
\newblock arXiv:1403.3801.

\bibitem[ABHP13]{AllBoettHlaPig_Turannical}
P.~Allen, J.~B\"ottcher, J.~Hladk\'y, and D.~Piguet.
\newblock Tur\'annical hypergraphs.
\newblock {\em Random Structures Algorithms}, 42(1):29--58, 2013.

\bibitem[CH63]{CorHaj}
K.~Corr\'adi and A.~Hajnal.
\newblock On the maximal number of independent circuits in a graph.
\newblock {\em Acta Math. Acad. Sci. Hungar.}, 14:423--439, 1963.

\bibitem[EG59]{ErdGall59}
P.~Erd{\H{o}}s and T.~Gallai.
\newblock On maximal paths and circuits of graphs.
\newblock {\em Acta Math. Acad. Sci. Hungar}, 10:337--356 (unbound insert),
  1959.

\bibitem[Erd62a]{ErdosRademacher}
P.~Erd\H{o}s.
\newblock On a theorem of {R}ademacher-{T}ur\'an.
\newblock {\em Illinois Journal of Mathematics}, 6:122--127, 1962.

\bibitem[{E}rd62b]{Erdos62Tiling}
P.~{E}rd\H{o}s.
\newblock {\"U}ber ein {E}xtremalproblem in der {G}raphentheorie.
\newblock {\em Arch. Math.}, 13:222--227, 1962.

\bibitem[Erd65]{Erdos:MatchingConjecture}
P.~Erd{\H{o}}s.
\newblock A problem on independent {$r$}-tuples.
\newblock {\em Ann. Univ. Sci. Budapest. E\"otv\"os Sect. Math.}, 8:93--95,
  1965.

\bibitem[Erd68]{E68}
P.~Erd\H{o}s.
\newblock On some new inequalities concerning extremal properties of graphs.
\newblock In {\em Theory of Graphs (Proc. Colloq., Tihany, 1966)}, pages
  77--81. Academic Press, New York, 1968.

\bibitem[GH12]{GroHla09+}
C.~Grosu and J.~Hladk{\'y}.
\newblock The extremal function for partial bipartite tilings.
\newblock {\em European J. Combin.}, 33(5):807--815, 2012.

\bibitem[Gy{\H{o}}91]{Gyo:EdgeDisjoint}
E.~Gy{\H{o}}ri.
\newblock On the number of edge disjoint cliques in graphs of given size.
\newblock {\em Combinatorica}, 11(3):231--243, 1991.

\bibitem[HS70]{hajnal70:_proof_p}
A.~Hajnal and E.~Szemer{\'e}di.
\newblock Proof of a conjecture of {P}. {E}rd{\H o}s.
\newblock In {\em Combinatorial theory and its applications, {II} ({P}roc.
  {C}olloq., {B}alatonf\"ured, 1969)}, pages 601--623. North-Holland,
  Amsterdam, 1970.

\bibitem[Kom00]{komlosTT}
J.~Koml{\'o}s.
\newblock Tiling {T}ur\'an theorems.
\newblock {\em Combinatorica}, 20(2):203--218, 2000.

\bibitem[LS83]{LovSim}
L.~Lov{\'a}sz and M.~Simonovits.
\newblock On the number of complete subgraphs of a graph. {II}.
\newblock In {\em Studies in pure mathematics}, pages 459--495. Birkh\"auser,
  Basel, 1983.

\bibitem[Moo68]{Moon68Tiling}
J.~W. Moon.
\newblock On independent complete subgraphs in a graph.
\newblock {\em Canad. J. Math.}, 20:95--102, 1968.

\bibitem[Raz08]{RazborovTriangles}
A.~A. Razborov.
\newblock On the minimal density of triangles in graphs.
\newblock {\em Combin. Probab. Comput.}, 17(4):603--618, 2008.

\bibitem[Sim68]{S68}
M.~Simonovits.
\newblock A method for solving extremal problems in graph theory, stability
  problems.
\newblock In {\em Theory of Graphs (Proc. Colloq., Tihany, 1966)}, pages
  279--319. Academic Press, New York, 1968.

\end{thebibliography}

%%%% APPENDIX %%%%%%%%%%%%%%%%%%%%%%%%%%%%%%%%%%%%%%%%%%%%%%%%%%%%%%%%%
\newpage
\appendix 
\section{Maximisations}\label{sec:max}
%\input{maximisations.tex}
%THIS WAS TAKEN FROM maximisations.tex
In this section we provide proofs of
Lemmas~\ref{lem:maxf},
\ref{lem:maxgsmall}, and \ref{lem:maxgl}. These lemmas concern maximisations of certain functions.
We build our arguments on tedious elementary algebraic manipulations. While
some of the statements we need could be obtained by a more
routine technique of Lagrange multipliers, this method seems
to lead to even lengthier calculations in our setting. This is
caused in particular by a high degree of
discontinuity, caused by various case distinctions
and appearance of the floor/ceiling function, of the functions
we want to maximise.

We first collect some useful statements relating to $f$, all of which are
obtained by simple calculation using equations~\eqref{eq:deffprime}
and~\eqref{eq:deffixed}.
The three relations \eqref{eq:ft1t2}--\eqref{eq:upperf} below hold for any $\ttt_1,\ldots,\ttt_4,\mmm,\iii\ge 0$.
\begin{equation}\begin{split}\label{eq:ft1t2}
  f(\ttt_1+x,&\ttt_2-x,\ttt_3,\ttt_4,\mmm,\iii)-f(\ttt_1,\ttt_2,\ttt_3,\ttt_4,\mmm,\iii)\\
  &=\frac{x^2}{2}+\big(\mmm-\ttt_2-\ttt_3-\ttt_4+\tfrac{1}{2}\big)x-
  \begin{cases}0  & \mmm=0\\ 2x & \mmm>0\end{cases}
\end{split}\end{equation}

\begin{equation}\begin{split}\label{eq:ft3tot2}
  f(\ttt_1,\ttt_2+\ttt_3,0,\ttt_4,\mmm,\iii)-f(\ttt_1,\ttt_2,\ttt_3,\ttt_4,\mmm,\iii)\ge (\iii-3)\ttt_3
\end{split}\end{equation}

\begin{multline}\label{eq:upperf}
  f(\ttt_1,\ttt_2,\ttt_3,\ttt_4,\mmm,\iii)
  \le
  8\binom{\ttt_1+\ttt_2+\ttt_3+\ttt_4}{2}-8\binom{\ttt_4}{2} \\ +(4\mmm+2\iii+6)(\ttt_1+\ttt_2+\ttt_3)-\ttt_1\ttt_4
\end{multline}

Provided that $\min\{\mmm,\mmm-x,\iii+2x\}\ge 1$, $\iii\ge 0$, and $x\ge 0$ we have
\begin{equation}\begin{split}\label{eq:fmi}
  f(\ttt_1,\ttt_2,\ttt_3,\ttt_4,\mmm-x,\iii+2x)-f(\ttt_1,\ttt_2,\ttt_3,\ttt_4,\mmm,\iii)\ge
  x(\ttt_2-\ttt_3)\,.
\end{split}\end{equation}

If $\mmm\ge 5$, combining~\eqref{eq:ft3tot2} and~\eqref{eq:fmi} we have
\begin{multline}\label{eq:killt3}
  f(\ttt_1,\ttt_2+\ttt_3,0,\ttt_4,\mmm-4,\iii+8)-f(\ttt_1,\ttt_2,\ttt_3,\ttt_4,\mmm,\iii) \\
  \ge
  4(\ttt_2-\ttt_3)+(\iii+5)\ttt_3\ge 0\,.
\end{multline}

We will use the following lemma in our later maximisation results. Observe that Lemma~\ref{lem:maxf} is part~\ref{mff2a} of this lemma.

\begin{lemma}\label{lem:MaxFixFn}
  Given non-negative integers
  $\ttt_1,\ttt_2,\ttt_3,\ttt_4,\mmm,\iii$ the following are true.
  \begin{enumerate}[label=\rom]
    \item\label{mff1} We have
      \[f(\ttt_1,\ttt_2,\ttt_3,\ttt_4,\mmm,\iii)\le\max\big(f(\ttt_1+\ttt_2,0,\ttt_3,\ttt_4,\mmm,\iii),f(0,\ttt_1+\ttt_2,\ttt_3,\ttt_4,\mmm,\iii)\big)\]
      with equality only if either $\ttt_1=0$ or $\ttt_2=0$.
    \item\label{mff2} When $\iii\ge 4$ we have
      \[f(\ttt_1,\ttt_2,\ttt_3,\ttt_4,\mmm,\iii)\le f(\ttt_1,\ttt_2+\ttt_3,0,\ttt_4,\mmm,\iii)\]
      with equality only if $\ttt_3=0$.
    \item\label{mff2a} When
    $n\ge 3k+2$ we have
 \[\max_{(\ttt_1,\ttt_2,0,0,\mmm,\iii)\in
      F(n,k)}\big(f(\ttt_1,\ttt_2,0,0,\mmm,\iii)+\iii\mmm+\mmm^2\big)=\max_{j\in[3]}e\big(E_j(n,k)\big)\,.\]
    \item\label{mff3} When $n\ge 3k+21$ we have
      \[\max_{(\ttt_1,\ttt_2,\ttt_3,0,\mmm,\iii)\in
      F(n,k)}f(\ttt_1,\ttt_2,\ttt_3,0,\mmm,\iii)+\iii\mmm+\mmm^2=\max_{j\in[3]}e\big(E_j(n,k)\big)\,.\]
  \end{enumerate}
\end{lemma}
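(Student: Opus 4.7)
Parts (i) and (ii) follow directly from the stated inequalities \eqref{eq:ft1t2} and \eqref{eq:ft3tot2}. For (i), the difference $f(\ttt_1+x,\ttt_2-x,\ttt_3,\ttt_4,\mmm,\iii)-f(\ttt_1,\ttt_2,\ttt_3,\ttt_4,\mmm,\iii)$ given by \eqref{eq:ft1t2} is a convex quadratic in~$x$ with leading coefficient $\tfrac12$, so over the integer interval $x\in[-\ttt_1,\ttt_2]$ its maximum is attained at one of the endpoints $x=-\ttt_1$ or $x=\ttt_2$, giving the two options on the right-hand side; equality throughout forces the maximum to be uniquely attained at a corner, i.e.\ $\ttt_1=0$ or $\ttt_2=0$. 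For (ii), the right-hand side $(\iii-3)\ttt_3$ of \eqref{eq:ft3tot2} is at least $\ttt_3\ge 0$ when $\iii\ge 4$, with strict inequality unless $\ttt_3=0$.

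For part~(iii) I would apply (i) first to reduce to the case $\ttt_1=0$ or $\ttt_2=0$, so that $\mmm$ is the only free variable (via $\iii=n-3k-2\mmm$). Substituting into \eqref{eq:deffprime}--\eqref{eq:deffixed}, $f+\iii\mmm+\mmm^2$ becomes a concave quadratic in $\mmm$ in each case; write $N:=n-3k$. In the case $\ttt_1=k$ the quadratic is $\mmm(N-\mmm)+2Nk+7\binom{k}{2}+3k$, whose integer maximum on $[0,\lfloor N/2\rfloor]$ is $\lfloor N/2\rfloor\lceil N/2\rceil+2Nk+7\binom{k}{2}+3k$; a short calculation (using that $n-k$ and $n-3k$ have the same parity) identifies this with $e(E_1(n,k))$. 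In the case $\ttt_2=k$ with $\mmm\ge 1$, the quadratic $\mmm(N-k-\mmm)+2Nk+8\binom{k}{2}+5k$ has integer maximum at $\mmm=\lfloor(N-k)/2\rfloor$ yielding $e(E_2(n,k))$ (when $N\ge k+2$) or else at the boundary $\mmm=1$ yielding $e(E_3(n,k))$. The degenerate cases $\mmm=0$ and $\iii=0$ only lower the value, by inspection of the sign adjustments in \eqref{eq:deffixed}.

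Part~(iv) splits on~$\iii$. If $\iii\ge 4$, (ii) lets us replace $\ttt_3$ by $0$ and $\ttt_2$ by $\ttt_2+\ttt_3$ without decreasing $f+\iii\mmm+\mmm^2$, and (iii) then closes the case. If $\iii\le 3$, the hypothesis $n\ge 3k+21$ forces $\mmm\ge 9$. A direct expansion from \eqref{eq:deffprime}--\eqref{eq:deffixed} yields
\[
f(\ttt_1+\ttt_3,\ttt_2,0,0,\mmm,\iii)-f(\ttt_1,\ttt_2,\ttt_3,0,\mmm,\iii)=\ttt_3\bigl[\mmm+\iii-\ttt_2-5-\tfrac{\ttt_3-1}{2}\bigr]
\]
(with a $+2\ttt_3$ correction when $\iii=0$, from \eqref{eq:deffixed}). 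When $\ttt_2$ and $\ttt_3$ are small enough that the bracket is non-negative, the move $\ttt_3\mapsto 0$, $\ttt_1\mapsto\ttt_1+\ttt_3$ does not decrease $f+\iii\mmm+\mmm^2$, and (iii) then supplies the bound. In the complementary regime (large $\ttt_2$), we instead invoke (ii), which loses at most $(3-\iii)\ttt_3\le 3k$ in $f$; the crucial observation is that here $\mmm$ is forced near $N/2$, far from the Case~2 optimum $\mmm^*\approx(N-k)/2$, so the value supplied by (iii) is strictly below $e(E_2(n,k))$ by a slack of order~$k^2$, which absorbs the deficit when $k$ is not too small, while for small $k$ the maximum $e(E_1(n,k))$ dominates by a margin that also absorbs the constant deficit after a short direct verification.

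The main technical obstacle is the bookkeeping in (iv) for $\iii\le 3$: one must compatibly handle the transition between the ``absorb into $\ttt_1$'' and ``absorb into $\ttt_2$'' regimes, track the parity/boundary cases of $\mmm$ and $\iii$ appearing in \eqref{eq:deffixed}, and verify at each transition that the slack comfortably exceeds the deficit. This is a routine but lengthy calculation, best organised by fixing $\iii\in\{0,1,2,3\}$ and proceeding case by case.
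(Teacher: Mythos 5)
Parts (i)--(iii) match the paper's argument: (i) and (ii) read off \eqref{eq:ft1t2} and \eqref{eq:ft3tot2} exactly as the paper does, and your (iii) is the same reduction---first apply (i) to land at $\ttt_1=k$ or $\ttt_2=k$, then maximise a concave quadratic in $\mmm$---as in \eqref{eq:MAXIMA11}--\eqref{eq:MAXIMA14}, with the same treatment of the degenerate $\mmm=0$ boundary.

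The gap is in part (iv), in what you call the complementary regime. Your split hinges on the sign of the bracket $\mmm+\iii-\ttt_2-5-\tfrac{\ttt_3-1}{2}$; when it is negative you absorb $\ttt_3$ into $\ttt_2$ at a cost of $(3-\iii)\ttt_3$ and then argue the bound from (iii) carries $\Theta(k^2)$ slack. But (iii) returns $\max_{j\in[3]}e\big(E_j(n,k)\big)$, and part (i) does \emph{not} tell you which branch of the max dominates. The bracket can be negative only when $n<5k+10$, but that range overlaps with $k\le\tfrac{2n-6}{9}$, where $e(E_1(n,k))$ is the maximiser; and with $\iii\le 3$, the $\ttt_1=k$ branch of (iii) is $\mmm(N-\mmm)+2Nk+7\binom{k}{2}+3k$ evaluated at $\mmm=\tfrac{N-\iii}{2}$, whose gap to $e(E_1(n,k))$ is only $\lfloor N^2/4\rfloor-\tfrac{N^2-\iii^2}{4}\le\tfrac{9}{4}$, nowhere near $(3-\iii)\ttt_3$. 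Your claimed $\Theta(k^2)$ slack applies only to the $\ttt_2=k$ branch, and the acknowledged ``short direct verification'' for small $k$ is exactly the unproved step. The paper avoids this ambiguity with a different case split that keeps the estimate one-sided: for $\ttt_2+\ttt_3\ge 13$ it shifts $(\mmm,\iii)\mapsto(\mmm-6,\iii+12)$ via \eqref{eq:fmi}, gaining at least $6(\ttt_2+\ttt_3)-6\iii-36>(3-\iii)\ttt_3$ before appealing to (iii); for $\ttt_2+\ttt_3\le 12$ it moves everything into $\ttt_1$ in a single step and verifies the gain $(\mmm-2)(\ttt_2+\ttt_3)-\binom{\ttt_2+\ttt_3}{2}$ exceeds $(3-\iii)\ttt_3$, never invoking (i) in a way that leaves the dominant branch undetermined. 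Your proposal never uses \eqref{eq:fmi} and would need something of this kind to close.
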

\begin{proof}[Proof of Lemma~\ref{lem:MaxFixFn}]
  {\sl Proof of part~\ref{mff1}:} By~\eqref{eq:ft1t2},
  \[f(\ttt_1+x,\ttt_2-x,\ttt_3,\ttt_4,\mmm,\iii)-f(\ttt_1,\ttt_2,\ttt_3,\ttt_4,\mmm,\iii)\] is a
  quadratic in $x$ with positive $x^2$-coefficient. It follows
  that for any $a\le b$, the maximum of
  $f(\ttt_1+x,\ttt_2-x,\ttt_3,\ttt_4,\mmm,\iii)-f(\ttt_1,\ttt_2,\ttt_3,\ttt_4,\mmm,\iii)$ over
  $a\le x\le b$ occurs when either $x=a$ or $x=b$. In
  particular, we have for all non-negative
  $\ttt_1,\ttt_2,\ttt_3,\ttt_4,\mmm,\iii$ that
  \[f(\ttt_1,\ttt_2,\ttt_3,\ttt_4,\mmm,\iii)\le\max\big(f(\ttt_1+\ttt_2,0,\ttt_3,\ttt_4,\mmm,\iii),f(0,\ttt_1+\ttt_2,\ttt_3,\ttt_4,\mmm,\iii)\big)\;,\]
  with equality only when $\ttt_1=0$ or $\ttt_2=0$.
  
  {\sl Proof of part~\ref{mff2}:} By~\eqref{eq:ft3tot2}, when
  $\iii\ge 4$, we have \[f(\ttt_1,\ttt_2+\ttt_3,0,\ttt_4,\mmm,\iii)\ge
  f(\ttt_1,\ttt_2,\ttt_3,\ttt_4,\mmm,\iii)\;,\] with equality only when $\ttt_3=0$.
  
  {\sl Proof of part~\ref{mff2a}:} By part~\ref{mff1} the maximum on the
  left-hand side is attained either when $\ttt_1=k,\ttt_2=0$, or when
  $\ttt_1=0,\ttt_2=k$.
  
  By~\eqref{eq:deffprime} and~\eqref{eq:deffixed} we have
  \begin{equation}\begin{split}
    f(k,0,0,0,\mmm,\iii)+\iii\mmm+\mmm^2 & =4\mmm k+2\iii k+7\binom{k}{2}+3k+\iii\mmm+\mmm^2 \\
    & =7\binom{k}{2}+3k+2(n-3k)k+\mmm(n-3k-\mmm) \\
    & \le 7\binom{k}{2}+3k+2(n-3k)k+
    \Big\lfloor\frac{n-3k}{2}\Big\rfloor\Big\lceil\frac{n-3k}{2}\Big\rceil \\
    & = \binom{k}{2}+k(n-k)+
    \Big\lfloor\frac{n-k}{2}\Big\rfloor\Big\lceil\frac{n-k}{2}\Big\rceil\\
    &=e\big(E_1(n,k)\big)\;,
  \end{split} \label{eq:MAXIMA11}
  \end{equation}
  where the last term on the second line achieves its maximum,
  $\big\lfloor\frac{n-3k}{2}\big\rfloor\big\lceil\frac{n-3k}{2}\big\rceil$,
  exactly when $\mmm=n-3k-\mmm$ and $\iii=0$, if $n-3k$ is even, or when $\mmm=n-3k-\mmm-1$
  and $\iii=1$, if not (observe that we cannot have $\mmm=n-3k-\mmm+1$, since then we
  would have $\iii=-1$).
  
  To deal with the term $\ttt_1=0$, $\ttt_2=k$ we have to distinguish between
  the cases $\mmm=0$ and $\mmm>0$.
  
   When $\mmm=0$ we first observe that the case $k=0$ trivially satisfies the
   statement.
   Thus we assume that $k>0$. We have
  \begin{equation}
\label{eq:MAXIMA12}  
  \begin{split}
    f(0,k,0,0,0,n-3k)&+0+0 \\  
    &=2(n-3k)k+8\binom{k}{2}+3k\\
    & < 8\binom{k}{2}+2(n-3k-2)k+3k+5k\\
    & = f(0,k,0,0,1,n-3k-2)\\
    &\le f(0,k,0,0,1,n-3k-2)+(n-3k-2)\times 1+1^2\,.
  \end{split}\end{equation}
%   where the inequality follows from $n\ge 2k+2$. 
  It follows that
  $f(0,k,0,0,\mmm,\iii)+\iii\mmm+\mmm^2$ is not maximised on $F(n,k)$ when $\mmm=0$. 
  
 When $\mmm\ge 1$, again from~\eqref{eq:deffprime} and~\eqref{eq:deffixed}, we have
  \begin{equation}
\label{eq:MAXIMA13}  
  \begin{split}
    f(0,k,0,0,\mmm,\iii)+\iii\mmm+\mmm^2 & =2\iii k+8\binom{k}{2}+3k+(2+3\mmm)k+\iii\mmm+\mmm^2\\
    & = 8\binom{k}{2}+5k+2(n-3k)k+\mmm(n-\mmm-4k)\,,
  \end{split}\end{equation}
  which is maximised on $F(n,k)$ both when
  \[\mmm=\max\Big(1,\Big\lfloor\frac{n-4k}{2}\Big\rfloor\Big)\mbox{ and } 
  \mmm=\max\Big(1,\Big\lceil\frac{n-4k}{2}\Big\rceil\Big)\,.\]
  
 It is
  straightforward from~\eqref{eq:deffprime} and~\eqref{eq:deffixed} to check that for the numbers 
  $\mmm_1:=\lfloor\frac{n-4k}{2}\rfloor, \iii_1:=n-3k-2\mmm_1$, and $\mmm_2:=\lceil\frac{n-4k}{2}\rceil, \iii_2:=n-3k-2\mmm_2$ we have 
  \begin{equation*}\begin{split}
  f(0,k,0,0,\mmm_1,
  \iii_1) +\iii_1\mmm_1+\mmm_1^2&= f(0,k,0,0,\mmm_2,
  \iii_2) +\iii_2\mmm_2+\mmm_2^2\\
  &=e\big(E_2(n,k)\big)\;.
  \end{split}\end{equation*}
  Further
  \begin{equation}
\label{eq:MAXIMA14}  
  f(0,k,0,0,1,n-3k-2)+
  (n-3k-2)\times 1+1^2=e\big(E_3(n,k)\big)\,.
  \end{equation}
  This
  completes the proof.

  {\sl Proof of part~\ref{mff3}:} Let $k\ge 0$ and $n\ge 3k+21$ be fixed.
  By part~\ref{mff2a} it is enough to show that the function
  $f(\ttt_1,\ttt_2,\ttt_3,0,\mmm,\iii)+\iii\mmm+\mmm^2$ is maximised on the set
  $F(n,k)$ only when $\ttt_3=0$.
  
  Let $(\ttt_1,\ttt_2,\ttt_3,0,\mmm,\iii)\in F(n,k)$. From part~\ref{mff2} we have that if
  $\iii\ge 4$ then
  \begin{multline*}
    f(\ttt_1,\ttt_2,\ttt_3,0,\mmm,\iii) \\
    \le\max\big(f(\ttt_1+\ttt_2+\ttt_3,0,0,0,\mmm,\iii),f(0,\ttt_1+\ttt_2+\ttt_3,0,0,\mmm,\iii)\big)\,.
  \end{multline*}
  with equality only when $\ttt_3=0$, as desired. In the
  rest of the proof we assume that $\iii\le 3$. Since
  $2\mmm+\iii=n-3k\ge 21$, we have $\mmm\ge 9$.
  
  We separate two cases.
  First, suppose that $\ttt_2+\ttt_3\ge 13$. Using~\eqref{eq:fmi}, since $\mmm-6\ge 3\ge
  1$, we have
  \begin{align*}f(\ttt_1,\ttt_2+\ttt_3,0,0,\mmm-6,\iii+12)&+(\iii+12)(\mmm-6)+(\mmm-6)^2\\
  &-\left(f(\ttt_1,\ttt_2+\ttt_3,0,0,\mmm,\iii)+\iii\mmm+\mmm^2\right)\\
  &\ge 6(\ttt_2+\ttt_3)-6\iii-36>(3-\iii)\ttt_3\;,\end{align*}
  since we have $\ttt_2+\ttt_3\ge 13$. By~\eqref{eq:ft3tot2} we obtain 
  \begin{align*}
  f(\ttt_1,\ttt_2+\ttt_3,0,0,\mmm-6,\iii+12)&+(\iii+12)(\mmm-6)+(\mmm-6)^2\\
  &>f(\ttt_1,\ttt_2,\ttt_3,0,\mmm,\iii)+\iii\mmm+\mmm^2\,,
  \end{align*} as desired.
  
  Second, suppose that $\ttt_2+\ttt_3\le 12$. We have 
  \begin{equation*}\begin{split}
    f(\ttt_1+\ttt_2+\ttt_3,0,0,0,\mmm,\iii) &-f(\ttt_1,\ttt_2+\ttt_3,0,0,\mmm,\iii)\\
    &\ge (\mmm-2)(\ttt_2+\ttt_3)-\binom{\ttt_2+\ttt_3}{2} \\
    &>(\mmm+\iii-11)(\ttt_2+\ttt_3)+(3-\iii)\ttt_3\\
    &\ge (3-\iii)\ttt_3\,,  
  \end{split}\end{equation*}
  where the last inequality comes from $2\mmm+\iii=n-3k\ge 21$.
  Together with~\eqref{eq:ft3tot2} we then have
  \[f(\ttt_1+\ttt_2+\ttt_3,0,0,0,\mmm,\iii)-f(\ttt_1,\ttt_2,\ttt_3,0,\mmm,\iii)>0\;,\] and
  hence that
  \[f(\ttt_1+\ttt_2+\ttt_3,0,0,0,\mmm,\iii)+\iii\mmm+\mmm^2>f(\ttt_1,\ttt_2,\ttt_3,0,\mmm,\iii)+\iii\mmm+\mmm^2\;,\]
  as desired.
  \end{proof}

\begin{proof}[Proof of Lemma~\ref{lem:maxgsmall}]
  Let $\bar \ttt_1:=\ttt_2+\ttt_3+\ttt_4$. We now provide some preliminary
  observations and then distinguish four cases to prove the lemma.

  From~\eqref{eq:Defngsmall} we have
  \begin{equation}\label{eq:NiceT4:eG-f}
      g_s(\ttt_1,\ttt_2,\ttt_3,\ttt_4,\mmm,\iii)-g_s(\ttt_1,\bar \ttt_1,0,0,\mmm,\iii) 
 \le 9\ttt_4-(\ttt_3+\ttt_4)(\iii-3)\,.
\end{equation}
  Moreover, if $k\le 43n/140$ then we have $2\mmm+\iii=n-3k\geq\frac{11n}{140}$. Hence
  \begin{equation}\label{eq:NiceT4:m}
    \mmm\geq\frac{11n}{280}-6
    \qquad \text{if $k\le 43n/140$ and $\iii\le 12$} \,.
\end{equation}

  \smallskip

  {\sl Case~1:} $k>43n/140$.
  We shall show that this implies \[g_s(\ttt_1,\ttt_2,\ttt_3,\ttt_4,\mmm,\iii)<
  e\big(E_4(n,k)\big)\,.\] From~\eqref{eq:upperf} we have
  \begin{equation}
  \label{eq:MAXIMA15}
    \begin{split}
    & g_s(\ttt_1,\ttt_2,\ttt_3,\ttt_4,\mmm,\iii)\\
    \le &
    8\binom{\ttt_1+\ttt_2+\ttt_3+\ttt_4}{2}+(4\mmm+2\iii+15)(\ttt_1+\ttt_2+\ttt_3+\ttt_4)-28+\iii\mmm+\mmm^2\\
    \le & 8\binom{k}{2}+2k(n-3k)+20k+\Big(\frac{n-3k}{2}\Big)^2\,,
    \end{split}
  \end{equation}
  where the second inequality comes from the fact that $(\ttt_1,\ttt_2,\ttt_3,\ttt_4,\mmm,\iii)\in F(n,k)$.
  
  Now solving the quadratic inequality (in variable
  $k$)
  \begin{multline*}
     8\binom{k}{2}+20k+2(n-3k)k+\Big(\frac{n-3k}{2}\Big)^2 \\
      <
      \binom{6k-n+4}{2}+(6k-n+4)(n-3k-2)+(n-3k-2)^2 \\
      = e\big(E_4(n,k)\big) 
 \end{multline*}
  shows that we have $g_s(\ttt_1,\ttt_2,\ttt_3,\ttt_4,\mmm,\iii)< e\big(E_4(n,k)\big)$ if~$k$ satisfies
\begin{equation}\label{eq:NiceT4:k}
 k>\frac{n+2}{5}+\frac{\sqrt{14n^2+406n-84}}{35}
 =\frac{7+\sqrt{14}}{35}n+\frac{14+15\sqrt{14}}{35}
  \,.
\end{equation}
 Indeed,~\eqref{eq:NiceT4:k} is satisfied as $n\ge 8406$ and as $k>\frac{43n}{140}$. Hence we are done in this case.

  \smallskip

  {\sl Case~2:} $\iii\ge 12$. Note that the right hand side
  of~\eqref{eq:NiceT4:eG-f} is not positive for $\iii\ge 12$.
  Thus~\eqref{eq:NiceT4:eG-f} implies $g_s(\ttt_1,\ttt_2,\ttt_3,\ttt_4,\mmm,\iii)\le g_s(\ttt_1,\bar
  \ttt_1,0,0,\mmm,\iii)=f(\ttt_1,\bar \ttt_1,0,0,\mmm,\iii)+\iii\mmm+\mmm^2-28$. Moreover,
  by Lemma~\ref{lem:MaxFixFn} part~\ref{mff2a} the function $f(\ttt_1,\bar
  \ttt_1,0,0,\mmm,\iii)+\iii\mmm+\mmm^2$, subject to the constraints $\ttt_1+\bar \ttt_1=k$ and
  $2\mmm+\iii=n-3k$, is bounded from above by $\max_{j\in[3]}
   e\big(E_j(n,k)\big)$. 
   Hence
  $g_s(\ttt_1,\ttt_2,\ttt_3,\ttt_4,\mmm,\iii)\le \max_{j\in[3]}
  e\big(E_j(n,k)\big)$ as desired.

  \smallskip

  {\sl Case~3:} $k\le 43n/140$, $\iii<12$, $\bar \ttt_1>80$.
  By~\eqref{eq:NiceT4:m} we have $\mmm-25>0$, so from~\eqref{eq:Defngsmall} we
  obtain
  \begin{multline}
  \label{eq:MAXIMA16}
    g_s(\ttt_1,\bar \ttt_1,0,0,\mmm-25,\iii+50)
    -g_s(\ttt_1,\bar \ttt_1,0,0,\mmm,\iii)
    \ge 25 \bar \ttt_1 - 25\iii - 25^2 \\
    \gBy{$\iii<12$} 25 \bar \ttt_1-25\cdot 37
    \gBy{$\bar \ttt_1>80$} 12 \bar \ttt_1 + 13\cdot 80 - 25\cdot 37
    > 12\bar \ttt_1 \ge 12 (\ttt_3+\ttt_4)
    \,.
  \end{multline}
  Since the right hand side of~\eqref{eq:NiceT4:eG-f} is at most
  $12(\ttt_3+\ttt_4)$, this implies
  \begin{equation*}\begin{split}
      g_s(\ttt_1,\bar \ttt_1,0,0,\mmm-25,\iii+50)
      &> g_s(\ttt_1,\bar \ttt_1,0,0,\mmm,\iii)+12(\ttt_3+\ttt_4)\\
      &\ge g_s(\ttt_1,\ttt_2,\ttt_3,\ttt_4,\mmm,\iii)\,,
  \end{split}\end{equation*}
  and so we conclude from Lemma~\ref{lem:MaxFixFn} part~\ref{mff2a} that
  \begin{equation*}\begin{split}
    g_s(\ttt_1,\ttt_2,\ttt_3,\ttt_4,\mmm,\iii)& <g_s(\ttt_1,\bar \ttt_1,0,0,\mmm-25,\iii+50)\\
    & <f(\ttt_1,\bar
    \ttt_1,0,0,\mmm-25,\iii+50)\\&~~~+(\iii+50)(\mmm-25)+(\mmm-25)^2\\&\le
    \max_{j\in[3]} e\big(E_j(n,k)\big)\,.
  \end{split}\end{equation*}

  \smallskip

  {\sl Case~4:} $k\le 43n/140$, $\iii<12$, $\bar \ttt_1\le 80$.
  Again from~\eqref{eq:Defngsmall} we have 
  \begin{equation}
\label{eq:MAXIMA17}  
  \begin{split}
      g_s(\ttt_1 +\bar \ttt_1,0,0,0,\mmm,\iii)-g_s(\ttt_1,\bar \ttt_1,0,0,\mmm,\iii)
      \ge \bar \ttt_1\mmm-\binom{\bar \ttt_1}{2}-2\bar \ttt_1\,.
  \end{split}\end{equation}
  In addition, by~\eqref{eq:NiceT4:m} and since $n\ge 8406$ we have
  $\mmm\geq\frac{11n}{280}-6>320$. This implies 
  \begin{equation*}
    \bar \ttt_1 \mmm-2\bar \ttt_1-\binom{\bar \ttt_1}{2}
    = \bar \ttt_1 \Big( \mmm-\frac{\bar \ttt_1-1}{2}-2 \Big)
    >\bar \ttt_1 \cdot 12 
    \ge 12(\ttt_3+\ttt_4) \,,
  \end{equation*}
  and hence we obtain using Lemma~\ref{lem:MaxFixFn} part~\ref{mff2a} that
  \begin{align*}
  \max_{j\in[3]}
  e\big(E_j(n,k)\big)&\ge
  f(\tau_1+\bar\ttt_1,0,0,0,\mmm,\iii)+\iii\mmm+\mmm^2
  \\ &>g_s(\ttt_1 +\bar
  \ttt_1,0,0,0,\mmm,\iii)\\ &> g_s(\ttt_1,\bar
  \ttt_1,0,0,\mmm,\iii)+12(\ttt_3+\ttt_4)\\ 
  &\ge g_s(\ttt_1,\ttt_2,\ttt_3,\ttt_4,\mmm,\iii)\;,
\end{align*}
  where, again, the last inequality follows from~\eqref{eq:NiceT4:eG-f}.
\end{proof}

\begin{proof}[Proof of Lemma~\ref{lem:maxgl}]
  Our aim is to show that
  $g_\ell(\ttt_1,\ttt_2,\ttt_3,\ttt_4,\mmm,\iii)$ with all variables
  required to be non-negative integers and with
  $\ttt_1+\ttt_2+\ttt_3+\ttt_4=k$ and $2\mmm+\iii=n-3k$, is bounded above
  by
  \[e(n,k):=\max_{i\in[4]}\big\{e\big(E_1(n,k)\big),\ldots,e\big(E_4(n,k)\big)\big\}\,.\]
  The
  main difficulty is to show that indeed if $g_\ell$ is maximised then
  $\ttt_3=0$ and at most one of the variables $\ttt_1,\ttt_2,\ttt_4$ is
  non-zero, and as mentioned the reason why this seems not to be easy to
  automate is that $g_\ell$ is quite discontinuous. There are two regimes
  in which $g_\ell$ behaves quite differently. Furthermore, it is
  occasionally convenient to assume that $2\mmm+\iii$ is reasonably large,
  leading to a third case.

The easier of the two regimes is when $3\ttt_4<2\mmm+\iii$. In this case, $g_\ell$ is defined by~\eqref{defglsmt4}. This function is still not quite continuous: when $\mmm$ or $\iii$ are changed from $0$ to $1$ or vice versa, there is discontinuity in the definition of the function $f$, but this turns out to be easy to handle.

{\sl Case 1:} $3\ttt_4<n-3k$ and $n-3k\ge 30$.

We define the following auxiliary function.
\begin{align}
\begin{split}\label{eq:defhLM}
h(\ttt_1,\ttt_2,\ttt_3,\ttt_4,\mmm,\iii):=4\mmm\ttt_1+2\iii\ttt_1+7\binom{\ttt_1}{2}+3\ttt_1+2\iii\ttt_2+8\binom{\ttt_2}{2}+3\ttt_2+\\
8\binom{\ttt_3}{2}+8\ttt_3\ttt_4+3\ttt_3+7\ttt_1\ttt_2+(2+3\mmm)\ttt_2+7\ttt_1(\ttt_3+\ttt_4)+
(3+3\mmm)\ttt_3+\\
8\ttt_2(\ttt_3+\ttt_4)+(2+\iii)\ttt_3+\iii\mmm+\mmm^2+(3+3\mmm)\ttt_4+(2+\iii)\ttt_4+\binom{3\ttt_4}{2}\,.
\end{split}
\end{align}
Observe that this function is almost the same as $g_\ell$: indeed, if $\mmm,\iii\ge 1$ then they are equal, while otherwise $g_\ell$ is smaller and the difference is one of $2\ttt_2+3\ttt_3$, $2\ttt_3$ and $2\ttt_2+5\ttt_3$ according to~\eqref{eq:deffixed}. We have the following equations (where we write $h$ for $h(\ttt_1,\ttt_2,\ttt_3,\ttt_4,\mmm,\iii)$).
\begin{align}
\label{eq:E1ax}
h(\ttt_1+x,\ttt_2,\ttt_3,\ttt_4-x,\mmm,\iii)-h&=x^2+\mmm x+\iii x-4x-\ttt_3x-\ttt_2x-2\ttt_4x\\
\label{eq:E1bx}
h(\ttt_1,\ttt_2+x,\ttt_3,\ttt_4-x,\mmm,\iii)-h&=\tfrac{x^2}{2}+\iii x-\tfrac52 x-\ttt_4x\\
\label{eq:E1cx}
h(\ttt_1,\ttt_2,\ttt_3+x,\ttt_4-x,\mmm,\iii)-h&=\tfrac{x^2}{2}+\tfrac{x}{2}-t_4x
\end{align}
These are all quadratic in $x$ with positive $x^2$ coefficients, and by the above observation the same statement is true (though the linear terms are different) when $h$ is replaced with $g_\ell$ throughout. It follows that if $g_\ell$ is maximised, then either $\ttt_4=0$ or $\ttt_4=\tfrac{n-3k}{3}$. We have the following equations:
\begin{align}
\label{ht1t3sm}h(\ttt_1+x,\ttt_2,\ttt_3-x,\ttt_4,\mmm,\iii)-h&=\tfrac{x^2}{2}+(\mmm+\iii-\ttt_2-\ttt_3-\ttt_4-\tfrac92)x\\
\label{ht1t2sm}h(\ttt_1+x,\ttt_2-x,\ttt_3,\ttt_4,\mmm,\iii)-h&=\tfrac{x^2}{2}+(\mmm-\ttt_2-\ttt_3-\ttt_4-\tfrac32)x\\
\label{ht2t3sm}h(\ttt_1,\ttt_2+x,\ttt_3-x,\ttt_4,\mmm,\iii)-h&=(\iii-3)x
\end{align}
First we will consider the case $\ttt_4=\tfrac{n-3k}{3}>0$. The equations~\eqref{ht1t3sm} and~\eqref{ht1t2sm} are positive quadratics in $x$, and the same is true replacing $h$ with $g_\ell$ throughout. It follows that if $g_\ell$ is maximised and $\ttt_1>0$ then $\ttt_1=2k-\tfrac{n}{3}$ and $\ttt_2=\ttt_3=0$. In this case we have $h=g_\ell$, and also
\begin{align}
\begin{split}\label{eq:ceLM}
g_\ell(k,0,0,0,\mmm,\iii)-g_\ell(2k-\tfrac{n}{3},0,0,\tfrac{n-3k}{3},\mmm,\iii)&=\tfrac{n-3k}{3}\big(-\tfrac{n-3k}{3}+\mmm +\iii -4\big)\\
&= \tfrac{n-3k}{3}\big(\tfrac{n-3k}{6}+\tfrac{\iii}{2}-4\big)\,,
\end{split}
\end{align}
which, since $n-3k\ge 30$, is positive. This is a contradiction to $g_\ell$ being maximised.

It remains to check the case $\ttt_1=0$. By~\eqref{ht2t3sm}, we have $h\le h(\ttt_1,\ttt_2+\ttt_3,0,\ttt_4,\mmm,\iii)+3\ttt_3$, and so the total difference between $h(\ttt_1,\ttt_2+\ttt_3,\ttt_4,\mmm,\iii)$ and $g_\ell(\ttt_1,\ttt_2,\ttt_3,\ttt_4,\mmm,\iii)$ is at most $8k\le 3n$. Since we assume $\ttt_1=0$ and $\ttt_4=\tfrac{n-3k}{3}$, and since $n\ge 10^4$, it is enough to show that $h(0,2k-\tfrac{n}{3},0,\tfrac{n-3k}{3},\mmm,\iii)$ is always smaller than $e(n,k)$ by at least $\tfrac{1}{1000}n^2$. To simplify the analysis, we write $\approx$ to mean we discard all terms only linear in $n$. Together with the difference between $h$ and $g_\ell$, the linear error terms never amount to more than $6n<\tfrac{1}{1000}n^2$.

We have
\begin{equation}\label{eq:MAXIMA1}
h(0,2k-\tfrac{n}{3},0,\tfrac{n-3k}{3},\mmm,\iii)=3\iii k-\tfrac{\iii n}{3}+\tfrac{n^2}{18}-\tfrac{3k}{2}+\tfrac{5n}{6}+3k\mmm-\tfrac{kn}{3}+\tfrac{9k^2}{2}+\iii\mmm+\mmm^2\,.
\end{equation}
Discarding the linear terms and substituting $\iii=n-3k-2m$ we get
\begin{equation}\label{eq:MAXIMA2}
h(0,2k-\tfrac{n}{3},0,\tfrac{n-3k}{3},\mmm,\iii)\approx \tfrac{11kn}{3}-\tfrac{9k^2}{2}-6k\mmm-\tfrac{5n^2}{18}+\tfrac{5\mmm n}{3}-\mmm^2\;.
\end{equation}

This function is a negative quadratic in $\mmm$ with maximum at $\mmm=\tfrac{5n}{6}-3k$. Since we are only interested in the case that $\mmm\ge 0$ and $\iii\ge 0$, we need to separate some subcases.

Subcase 1: $0.31n\le k\le n/3$ and $\mmm=0$. We have
\begin{align}
\label{eq:MAXIMA3}
h(0,2k-\tfrac{n}{3},0,\tfrac{n-3k}{3},0,n-3k)&\approx\tfrac{11kn}{3}-\tfrac{9k^2}{2}-\tfrac{5n^2}{18}\;\text{and}\\
\label{eq:MAXIMA4}
h(0,2k-\tfrac{n}{3},0,\tfrac{n-3k}{3},0,n-3k)-e(E_4(n,k))&\approx-\tfrac{7n^2}{9}+\tfrac{20kn}{3}-\tfrac{27k^2}{2}\,.
\end{align}
This function is maximised at $k=\tfrac{20n}{81}$, and since $0.31>\tfrac{20}{81}$ its maximum in the range $0.31n\le k\le n$ is at $k=0.31n$, where the value attained is less than $-0.007n^2$.

Subcase 2: $\tfrac{5n}{18}\le k<0.31n$ and $\mmm=0$. We have
\begin{equation}\label{eq:MAXIMA5}
h(0,2k-\tfrac{n}{3},0,\tfrac{n-3k}{3},0,n-3k)-e(E_3(n,k))\approx\tfrac{5kn}{3}-\tfrac{5k^2}{2}-\tfrac{5n^2}{18}\,,\end{equation}
which function is maximised at $k=n/3$ and hence in the range of $k$ always smaller than the value at $k=0.31n$, which is less than $-0.001n^2$.

Subcase 3: $\tfrac{2n}{9}\le k\le\tfrac{5n}{18}$, and $\mmm=\tfrac{5n}{6}-3k$. We have
\begin{equation}
\label{eq:MAXIMA6}
h(0,2k-\tfrac{n}{3},0,\tfrac{n-3k}{3},\tfrac{5n}{6}-3k,3k-\tfrac{2n}{3})\approx-\tfrac{4kn}{3}+\tfrac{9k^2}{2}+\tfrac{15n^2}{36},\quad\text{and}
\end{equation}
\begin{equation}
\label{eq:MAXIMA7}
 h(0,2k-\tfrac{n}{3},0,\tfrac{n-3k}{3},\tfrac{5n}{6}-3k,3k-\tfrac{2n}{3})-e(E_3(n,k))\approx-\tfrac{10kn}{3}+\tfrac{13k^2}{2}+\tfrac{15n^2}{36}\,,
\end{equation}
which is a positive quadratic in $k$. At $k=\tfrac{5n}{18}$ the value of the LHS of~\eqref{eq:MAXIMA7} is $\tfrac{-1205n^2}{648}$, and at $k=\tfrac{2n}{9}$ we get $\tfrac{-481n^2}{324}$, the latter of which is the maximum in this range of $k$.

Subcase 4: $\tfrac{n}{5}<k<\tfrac{2n}{9}$ and $\iii=0$. We get
\begin{equation}
\label{eq:MAXIMA8}
h(0,2k-\tfrac{n}{3},0,\tfrac{n-3k}{3},\tfrac{n-3k}{2},0)\approx\tfrac{-kn}{3}+\tfrac{11n^2}{36}+\tfrac{9k^2}{4},\quad \text{and}
\end{equation}
\begin{equation}
\label{eq:MAXIMA9}
\quad h(0,2k-\tfrac{n}{3},0,\tfrac{n-3k}{3},\tfrac{n-3k}{2},0)-e(E_1(n,k))\approx\tfrac{5k^2}{2}-\tfrac{5kn}{6}+\tfrac{n^2}{18}\,,
\end{equation}
which is a positive quadratic in $k$. At $k=\tfrac{n}{5}$ we get $\tfrac{-n^2}{90}$, and at $k=\tfrac{2n}{9}$ the value is $\tfrac{-n^2}{162}$, the latter of which is the maximum in this range of $k$.

Since these subcases exhaust the range of $k$ we are considering, we conclude that indeed if $\ttt_1=0$ and $\ttt_4>0$ then $g_\ell$ is not maximised. It follows that the only maxima of $g_\ell$ with $3\ttt_4<2\mmm+\iii$ are with $\ttt_4=0$. By Lemma~\ref{lem:MaxFixFn} part~\ref{mff3}, it follows that the maximum of $g_\ell(\ttt_1,\ttt_2,\ttt_3,\ttt_4,\mmm,\iii)$ subject to the conditions $\ttt_1+\ttt_2+\ttt_3+\ttt_4=k$, $2\mmm+\iii=n-3k$, and $3\ttt_4<2\mmm+\iii$, is at most $e(n,k)$ as desired.

{\sl Case 2:} $3\ttt_4\ge \max(528,3\kappa_0,n-3k)$.

In this range $g_\ell$ is defined by either~\eqref{defglnorm} or~\eqref{defglt11}. As in the previous case, the function is not continuous but the discontinuities are small. Again we define an auxiliary function:
\begin{align}
\begin{split}\label{eq:hDVA}
h(\ttt_1,\ttt_2,\ttt_3,\ttt_4,\mmm,\iii):=4\mmm\ttt_1+2\iii\ttt_1+7\binom{\ttt_1}{2}+3\ttt_1+2\iii\ttt_2+8\binom{\ttt_2}{2}+3\ttt_2+\\
8\binom{\ttt_3}{2}+8\ttt_3\ttt_4+3\ttt_3+7\ttt_1\ttt_2+(2+3\mmm)\ttt_2+7\ttt_1(\ttt_3+\ttt_4)+
(3+3\mmm)\ttt_3+\\
8\ttt_2(\ttt_3+\ttt_4)+(2+\iii)\ttt_3+(2\mmm+\iii-2)(3\ttt_4+2)+\binom{3\ttt_4-2\mmm-\iii+4}{2}\,.
\end{split}
\end{align}

The difference between $h$ and $g_\ell$ is at most $2\ttt_2+5\ttt_3+12n+20$. As in the previous case, we compute some differences of this auxiliary function, where we write $h$ as a shorthand for $h(\ttt_1,\ttt_2,\ttt_3,\ttt_4,\mmm,\iii)$.
\begin{align}
\label{ht1t2l}h(\ttt_1+x,\ttt_2-x,\ttt_3,\ttt_4,\mmm,\iii)-h&=\tfrac{x^2}{2}+(\mmm-\ttt_2-\ttt_3-\ttt_4-\tfrac32)x\\
\label{ht1t3l}h(\ttt_1+x,\ttt_2,\ttt_3-x,\ttt_4,\mmm,\iii)-h&=\tfrac{x^2}{2}+(\mmm+\iii-\ttt_2-\ttt_3-\ttt_4-\tfrac92)x\\
\label{ht2t3l}h(\ttt_1,\ttt_2+x,\ttt_3-x,\ttt_4,\mmm,\iii)-h&=(\iii-3)x\\
\label{ht1t4l}h(\ttt_1+x,\ttt_2,\ttt_3,\ttt_4-x,\mmm,\iii)-h&=x^2+(4\mmm+2\iii-\ttt_2-\ttt_3-2\ttt_4-5)x\\
\label{ht2t4l}h(\ttt_1,\ttt_2+x,\ttt_3,\ttt_4-x,\mmm,\iii)-h&=\tfrac{x^2}{2}+(2\iii+3\mmm-\ttt_4-\tfrac72)x\\
\label{ht3t4l}h(\ttt_1,\ttt_2,\ttt_3+x,\ttt_4-x,\mmm,\iii)-h&=\tfrac{x^2}{2}+(3\mmm-\ttt_4+\tfrac12)x
\end{align}

We will first consider the case $\ttt_4=\tfrac{n-3k}{3}$. We will show that in this case $g_\ell<e(n,k)$. By~\eqref{ht2t3l}, $h$ is larger by at most $3t_3$ than $h(\ttt_1,\ttt_2+\ttt_3,0,\ttt_4,\mmm,\iii)$, and so $g_\ell(\ttt_1,\ttt_2,\ttt_3,\ttt_4,\mmm,\iii)$ is larger than $h(\ttt_1,\ttt_2+\ttt_3,0,\ttt_4,\mmm,\iii)$ by at most $2\ttt_2+8\ttt_3+12n+20<16n$. Since $n\ge 4\cdot 10^4$ it suffices to show that $h(\ttt_1,\ttt_2,0,\tfrac{n-3k}{3},\mmm,\iii)$ is smaller than $e(n,k)$ by at least $\tfrac{1}{2000}n^2$.

We may assume that $h(\ttt_1,\ttt_2,0,\tfrac{n-3k}{3},\mmm,\iii)$ is maximised, and since~\eqref{ht1t2l} is a positive quadratic in $x$ this implies that either $\ttt_1=2k-\tfrac{n}{3}$ and $\ttt_2=0$ or vice versa; we separate subcases. As in the previous case, we will discard linear terms, which will never exceed $19n<\tfrac{1}{2000}n^2$.

Subcase 1: $\ttt_1=2k-\tfrac{n}{3}$ and $\ttt_2=0$; $k\le\tfrac{n}{4}$.

We have
\begin{align}
\label{eq:buH}
h(2k-\tfrac{n}{3},0,0,\tfrac{n-3k}{3},\mmm,\iii)&=\tfrac{7kn}{3}-\tfrac{n^2}{18}-3k^2-k+\tfrac{n}{6}+2\\
\nonumber
&\approx \tfrac{7kn}{3}-\tfrac{n^2}{18}-3k^2\,, \text{and}\\
\label{eq:buHH}
h(2k-\tfrac{n}{3},0,0,\tfrac{n-3k}{3},\mmm,\iii)-e(E_2(n,k))&\approx \tfrac{7kn}{3}-\tfrac{11n^2}{36}-5k^2\,,
\end{align}
which is maximised at $k=\tfrac{7n}{30}$ where we obtain $\tfrac{-n^2}{45}$.

Subcase 2: $\ttt_1=2k-\tfrac{n}{3}$ and $\ttt_2=0$; $k>\tfrac{n}{4}$.
We have
\begin{equation}\label{eq:bHi}
h(2k-\tfrac{n}{3},0,0,\tfrac{n-3k}{3},\mmm,\iii)-e(E_3(n,k))\approx \tfrac{kn}{3}-\tfrac{n^2}{18}-k^2\,,
\end{equation}
which function is maximised at $k=\tfrac{n}{6}$ where we obtain $\tfrac{-n^2}{36}$.

Subcase 3: $\ttt_1=0$ and $\ttt_2=2k-\tfrac{n}{3}$; $\tfrac{n}{5}\le k\le\tfrac{n}{4}$.
Since $2\mmm=n-3k-\iii$ we have
\begin{equation}\label{eq:dvPl}
h(0,2k-\tfrac{n}{3},0,\tfrac{n-3k}{3},\mmm,\iii)=\tfrac{7kn}{6}+\tfrac{n^2}{18}+\iii(k-\tfrac{n}{6})+2k-\tfrac{n}{3}+2\,,
\end{equation}
which is maximised over $\iii$ when $\iii=n-3k$. We can therefore assume $\iii=n-3k$ and obtain
\begin{equation}
\label{eq:MAXIMA10}
h(0,2k-\tfrac{n}{3},0,\tfrac{n-3k}{3},\mmm,\iii)-e(E_2(n,k))\approx \tfrac{8kn}{3}-\tfrac{13n^2}{36}-5k^2\,,
\end{equation}
which function is maximised at $k=\tfrac{4n}{15}$ where the value is $\tfrac{-n^2}{180}$.

Subcase 4: $\ttt_1=0$ and $\ttt_2=2k-\tfrac{n}{3}$;
$\tfrac{n}{4}<k\le\tfrac{3n}{10}$. As in~\eqref{eq:dvPl}, the function
$h(0,2k-\tfrac{n}{3},0,\tfrac{n-3k}{3},\mmm,\iii)$ is maximised over $\iii$
when $\iii=n-3k$ and therefore we assume this is the case. 
We have
\begin{equation}
\label{eq:CerBel}
h(0,2k-\tfrac{n}{3},0,\tfrac{n-3k}{3},\mmm,\iii)-e(E_3(n,k))\approx \tfrac{2kn}{3}-\tfrac{n^2}{9}-k^2\,,
\end{equation}
which function is maximised at $k=n/3$, so within this range of $k$ the maximum is at $k=\tfrac{3n}{10}$ where the value is $\tfrac{-n^2}{900}$.

Subcase 5: $\ttt_1=0$ and $\ttt_2=2k-\tfrac{n}{3}$;
$\tfrac{3n}{10}<k\le\tfrac{n}{3}$. As in~\eqref{eq:dvPl}, the function
$h(0,2k-\tfrac{n}{3},0,\tfrac{n-3k}{3},\mmm,\iii)$ is maximised over $\iii$
when $\iii=n-3k$ and therefore we assume this is the case. 
We have
\begin{equation}
\label{eq:BelCer}
h(0,2k-\tfrac{n}{3},0,\tfrac{n-3k}{3},\mmm,\iii)-e(E_4(n,k))\approx \tfrac{17kn}{3}-\tfrac{11n^2}{18}-12k^2\,,\end{equation}
which function is maximised at $k=\tfrac{17n}{72}$ where the value is $\tfrac{-41n^2}{540}$.

These subcases are exhaustive, and it follows that if $\ttt_4=\tfrac{n-3k}{3}$ then $g_\ell<e(n,k)$.

It remains to consider the possibility $\ttt_4>\tfrac{n-3k}{3}$. Observe that~\eqref{ht1t4l},~\eqref{ht2t4l} and~\eqref{ht3t4l} are quadratics in $x$ with positive $x^2$ coefficient. In particular, if $h$ is maximised and $3\ttt_4>n-3k$ then $\ttt_1=\ttt_2=\ttt_3=0$. The same statement is almost true replacing $h$ with $g_\ell$: the only problem is that when $\ttt_1$ is varied, the function is discontinuous, being greater by $20$ than it `should be', at $\ttt_1=1$, that being where~\eqref{defglt11} is used rather than~\eqref{defglnorm}. Nevertheless, we can conclude that if $g_\ell$ is maximised then $\ttt_2=\ttt_3=0$ and $\ttt_1\in\{0,1\}$. We have
\[g_\ell(\ttt_1,0,0,k-\ttt_1,\mmm,\iii)\le (2n+3+k-7\ttt_1)\ttt_1+p(n-3\ttt_1,n-3k)+20\,.\]
This is very close to $e(E_4(n,k))$, and if $n/5\le k\le 3n/10$ then it is smaller than $e(E_3(n,k))$ by at least $\tfrac{n^2}{100}-30n>0$. If on the other hand $k>\tfrac{3n}{10}$, then $n-3\ttt_1>9(n-3k)$ and by~\eqref{eq:defpprime} we have
\begin{align}
\nonumber
&g_\ell(\ttt_1,0,0,k-\ttt_1,\mmm,\iii)=\\
\label{eq:RHSdi}
&=(2n-8k-\tfrac32-\tfrac{5\ttt_1}{2})\ttt_1-\tfrac{3n}{2}+9k^2+9k-3kn+\tfrac{n^2}{2}+\begin{cases}2&\ttt_1=0\\22&\ttt_1=1\end{cases}\,.
\end{align}
Since $2n-8k-\tfrac32-\tfrac{5\ttt_1}{2}< \tfrac{-2n}{5}<-20$ we conclude that $g_\ell$ is maximised with $\ttt_1=0$. Now we have
\begin{equation}\label{eq:AhojPepo}
g_\ell(0,0,0,k,\mmm,\iii)=p(n,n-3k)=e(E_4(n,k))\,,
\end{equation}
as desired.

{\sl Case 3:} $3\ttt_4<\max(528,3\kappa_0)$ and $n-3k<\max(528,3\kappa_0)$.

Observe that, taking the largest terms of each of~\eqref{defglsmt4},~\eqref{defglnorm} and~\eqref{defglt11}, we have
\begin{equation*}\begin{split}
g_\ell(\ttt_1,\ttt_2,&\ttt_3,\ttt_4,\mmm,\iii) \\
&\le 8\binom{k-\ttt_4}{2}+(8\ttt_4+3)(k-\ttt_4)+\binom{3\ttt_4}{2}+(n-3k)n+20\\
&\le \tfrac{4n^2}{9}+2\max(528,3\kappa_0)n+\binom{\max(528,3\kappa_0)}{2}+20\\
&\le e(E_4(n,k))-\tfrac{n^2}{18}+8\max(528,3\kappa_0)n+\max(528,3\kappa_0)^2\\
&<e(E_4(n,k))\,,
\end{split}\end{equation*}
where the final inequality uses $n\ge 300\max(528,3\kappa_0)$.

These cases are exhaustive, completing the proof.
\end{proof}
\end{document}